\def\@secnumfont{\bfseries\scshape}
\def\section{\@startsection{section}{1}%
  \z@{.7\linespacing\@plus\linespacing}{.5\linespacing}%
  {\normalfont\large\bfseries\scshape\centering}}
\def\subsection{\@startsection{subsection}{2}%
  \z@{.5\linespacing\@plus.7\linespacing}{-.5em}%
  {\normalfont\bfseries\scshape}}
\def\subsubsection{\@startsection{subsubsection}{3}%
  \z@{.5\linespacing\@plus.7\linespacing}{-.5em}%
  {\normalfont\scshape}}
\def\specialsection{\@startsection{section}{1}%
  \z@{\linespacing\@plus\linespacing}{.5\linespacing}%
  {\normalfont\centering\large\bfseries\scshape}}
\renewenvironment{proof}[1][\proofname]{\par
\pushQED{\qed}%
\normalfont \topsep4\p@\@plus4\p@\relax
\trivlist
\item[\hskip\labelsep
\bfseries
#1\@addpunct{.}]\ignorespaces
}{%
\popQED\endtrivlist\@endpefalse
}
\newcommand \Dotfill {\leavevmode \leaders \hb@xt@ 6pt{\hss .\hss }\hfill \kern \z@}
\def\@tocline#1#2#3#4#5#6#7{\relax
  \ifnum #1>\c@tocdepth 
  \else
    \par \addpenalty\@secpenalty\addvspace{#2}%
    \begingroup \hyphenpenalty\@M
    \@ifempty{#4}{%
      \@tempdima\csname r@tocindent\number#1\endcsname\relax
    }{%
      \@tempdima#4\relax
    }%
    \parindent\z@ \leftskip#3\relax \advance\leftskip\@tempdima\relax
    \rightskip\@pnumwidth plus4em \parfillskip-\@pnumwidth
    #5\leavevmode\hskip-\@tempdima
      \ifcase #1
       \or\or \hskip 1.65em \or \hskip 3.3em \else \hskip 4.95em \fi%
      #6\nobreak\relax
    \Dotfill
    \hbox to\@pnumwidth{\@tocpagenum{#7}}\par
    \nobreak
    \endgroup
  \fi}
\def\l@section{\@tocline{1}{0pt}{1pc}{}{\scshape}}
\renewcommand{\tocsection}[3]{%
\indentlabel{\@ifnotempty{#2}{\ignorespaces#1 #2.\hskip 0.7em}}#3}
\def\l@subsection{\@tocline{2}{0pt}{1pc}{5pc}{}}
\def\l@subsubsection{\@tocline{3}{0pt}{1pc}{7pc}{}}
\numberwithin{equation}{section}
\newtheoremstyle{mytheorem}{.7\linespacing\@plus.3\linespacing}{.7\linespacing\@plus.3\linespacing}%
     {\itshape}
     {}
     {\bfseries}
     {. }
     {0.3ex}
     {\thmname{{\bfseries #1}}\thmnumber{ {\bfseries #2}}\thmnote{ (#3)}}  
\theoremstyle{mytheorem}
\newtheorem{theorem}{Theorem}[section]
\newtheorem{lemma}[theorem]{Lemma}
\newtheorem{proposition}[theorem]{Proposition}
\newtheorem{remark}[theorem]{Remark}
\newtheorem{definition}[theorem]{Definition}
\newtheorem{hypothesis}[theorem]{Hypothesis}
\newcommand{\bbE}{{\ensuremath{\mathbb E}} }
\newcommand{\bbN}{{\ensuremath{\mathbb N}} }
\newcommand{\bbP}{{\ensuremath{\mathbb P}} }
\newcommand{\bbR}{{\ensuremath{\mathbb R}} }
\newcommand{\bbT}{{\ensuremath{\mathbb T}} }
\newcommand{\bbZ}{{\ensuremath{\mathbb Z}} }
\newcommand{\cL}{{\ensuremath{\mathcal L}} }
\newcommand{\ga}{\alpha}
\newcommand{\gb}{\beta}
\newcommand{\gl}{\lambda}
\newcommand{\go}{\omega}
\renewcommand{\tilde}{\widetilde}          
\DeclareMathSymbol{\leqslant}{\mathalpha}{AMSa}{"36} 
\DeclareMathSymbol{\geqslant}{\mathalpha}{AMSa}{"3E} 
\DeclareMathSymbol{\eset}{\mathalpha}{AMSb}{"3F}     
\newcommand{\sumtwo}[2]{\sum_{\substack{#1 \\ #2}}} 
\newcommand{\sumthree}[3]{\sum_{\substack{#1 \\ #2 \\ #3}}} 
\newcommand{\asto}[1]{\underset{{#1}\to\infty}{\longrightarrow}}
\newcommand{\astoo}[2]{\underset{{#1}\to{#2}}{\longrightarrow}}
\newcommand{\R}{\mathbb{R}}
\newcommand{\Z}{\mathbb{Z}}
\newcommand{\N}{\mathbb{N}}
\newcommand{\T}{\mathbb{T}}
\def\bs{\boldsymbol}
\newcommand{\PEfont}{\mathrm}
\newcommand{\p}{\ensuremath{\PEfont P}}
\DeclareMathOperator{\Ci}{\ensuremath{\PEfont C}}
\DeclareMathOperator{\e}{\ensuremath{\PEfont E}}
\newcommand\Ro{\ensuremath{\bs{\mathrm{R}}}}
\newcommand{\E}{\e}
\renewcommand{\P}{\p}
\DeclareMathOperator{\bbvar}{\ensuremath{\mathbb{V}ar}}
\DeclareMathOperator{\bbcov}{\ensuremath{\mathbb{C}ov}}
\newcommand{\ind}{\mathds{1}}
\newcommand{\eps}{\varepsilon}
\renewcommand{\epsilon}{\varepsilon}
\renewcommand{\theta}{\vartheta}
\renewcommand{\rho}{\varrho}
\newenvironment{myenumerate}{%
\renewcommand{\theenumi}{\arabic{enumi}}%
\renewcommand{\labelenumi}{{\rm(\theenumi)}}%
\begin{list}{\labelenumi}
	{%
	\setlength{\itemsep}{0.4em}%
	\setlength{\topsep}{0.5em}%
	\setlength\leftmargin{2.45em}%
	\setlength\labelwidth{2.05em}%
	\setlength{\labelsep}{0.4em}%
	\usecounter{enumi}%
	}%
	}%
{\end{list}
}
\newenvironment{aenumerate}{%
\renewcommand{\theenumi}{\alph{enumi}}%
\renewcommand{\labelenumi}{{\rm(\theenumi)}}%
\begin{list}{\labelenumi}
	{%
	\setlength{\itemsep}{0.4em}%
	\setlength{\topsep}{0.5em}%
	\setlength\leftmargin{2.45em}%
	\setlength\labelwidth{2.05em}%
	\setlength{\labelsep}{0.4em}%
	\usecounter{enumi}%
	}%
	}%
{\end{list}
}
\renewenvironment{enumerate}{
\begin{myenumerate}}%
{\end{myenumerate}}
\newenvironment{myitemize}{%
\begin{list}{$\bullet$}%
 	{%
	\setlength{\itemsep}{0.4em}%
	\setlength{\topsep}{0.5em}%
	\setlength\leftmargin{2.65em}%
	\setlength\labelwidth{2.65em}%
	\setlength{\labelsep}{0.4em}%
	}%
	}%
{\end{list}}
\renewenvironment{itemize}{
\begin{myitemize}}%
{\end{myitemize}}
\date{\today}
\newcommand\dd{\mathrm{d}}
\newcommand\hbeta{{\hat{\beta}}}
\newcommand\sfY{\mathsf Y}
\newcommand\sfX{\mathsf X}
\newcommand\sfp{\mathsf p}
\newcommand\sft{\mathsf t}
\newcommand\bi{\boldsymbol{i}}
\newcommand\ba{\boldsymbol{a}}
\newcommand\bb{\boldsymbol{b}}
\newcommand\bc{\boldsymbol{c}}
\newcommand\bd{\boldsymbol{d}}
\newcommand\bn{\boldsymbol{n}}
\newcommand\bx{\boldsymbol{x}}
\newcommand\by{\boldsymbol{y}}
\newcommand\bz{\boldsymbol{z}}
\newcommand\bw{\boldsymbol{w}}
\newcommand\fin{\mathrm{fin}}
\begin{document}

\title[Universality in marginally relevant disordered systems]
{Universality in marginally relevant\\ disordered systems}

\begin{abstract}
We consider disordered systems of directed polymer type, for which disorder is so-called
marginally relevant. These include the
usual (short-range) directed polymer model in dimension $(2+1)$, the long-range directed
polymer model with Cauchy tails in dimension $(1+1)$ and
the disordered pinning model with tail exponent $1/2$. We show that in a suitable weak disorder
and continuum limit, the partition functions of these different
models converge to a universal limit:
a log-normal random field with a multi-scale correlation structure, which undergoes a phase transition
as the disorder strength varies. As a by-product, we show that the solution of the
two-dimensional Stochastic Heat Equation, suitably regularized, converges to the same limit.
The proof, which uses the celebrated Fourth Moment Theorem, reveals an interesting
chaos structure shared by all models in the above class.
 \end{abstract}

\author[F.Caravenna]{Francesco Caravenna}
\address{Dipartimento di Matematica e Applicazioni\\
 Universit\`a degli Studi di Milano-Bicocca\\
 via Cozzi 55, 20125 Milano, Italy}
\email{francesco.caravenna@unimib.it}

\author[R.Sun]{Rongfeng Sun}
\address{Department of Mathematics\\
National University of Singapore\\
10 Lower Kent Ridge Road, 119076 Singapore
}
\email{matsr@nus.edu.sg}

\author[N.Zygouras]{Nikos Zygouras}
\address{Department of Statistics\\
University of Warwick\\
Coventry CV4 7AL, UK}
\email{N.Zygouras@warwick.ac.uk}

\keywords{Directed Polymer, Pinning Model, Polynomial Chaos, Disordered System,
Fourth Moment Theorem,
Marginal Disorder Relevance, Stochastic Heat Equation}
\subjclass[2010]{Primary: 82B44; Secondary: 82D60, 60K35}

\maketitle

\tableofcontents

\addtocounter{section}{0}

\section{Introduction}
\label{sec:intro}

Many disordered systems arise as random perturbations of a pure (or homogeneous) model.
Examples include the random pinning model~\cite{G07}, where the pure system is a renewal process,
the directed polymer model~\cite{CSY04}, where the pure system is a directed random walk, the
random field Ising model~\cite{B06} and the stochastic heat equation~\cite{BC95}. A
fundamental question for such systems is:
{\em Does addition of disorder alter the
qualitative behavior of the pure model, such as its
large-scale properties and/or critical exponents?}
\smallskip

If the answer is yes, regardless of how small the disorder strength is, then the model
is called {\em disorder relevant}. If, on the other hand,
disorder has to be strong enough to cause a qualitative change,
then the model is called {\em disorder irrelevant}.
This difference
can be understood heuristically via renormalization transformations~\cite{B06, G10}:
if one rescales space (coarse graining) and looks at the resulting renormalized
disordered system on larger
and larger spatial scales, then one will observe that the ``effective'' strength of disorder will asymptotically diverge if disorder
is relevant, while it will vanish if disorder is irrelevant.
\smallskip

Whether a model is disorder relevant or irrelevant depends crucially on the spatial dimension $d$ and
its correlation length exponent $\nu$. A milestone in the study of disordered systems in
the physics literature is the Harris criterion~\cite{H74}, which asserts that if $d<2/\nu$,  then disorder
is relevant, while if $d>2/\nu$, then it is irrelevant.
In the critical case $d=2/\nu$, disorder is
marginal and the Harris criterion is inconclusive: disorder can be either {\em marginally relevant}
or {\em marginally irrelevant} depending on the finer details of the model.
\smallskip

Inspired by the study of an intermediate disorder regime for directed polymers \cite{AKQ14a},
we proposed in~\cite{CSZ13}
a new perspective on disorder relevance. The key observation is that, if
a model is disorder relevant, then it is possible to tune the strength of disorder down to zero
(weak disorder limit) at the same time as one rescales space
(continuum limit), so as to obtain a
one-parameter family of \emph{disordered continuum models},
indexed by a macroscopic
disorder strength parameter $\hat\beta \ge 0$.
In a sense, such continuum models interpolate between the
scaling limit of the pure model ($\hat\beta = 0$)
and the scaling limit of the original disordered model $(\hat\beta = \infty)$,
allowing one to study the onset of the effect of disorder.
\smallskip

The main step in the construction of such disordered continuum models is to identify their
\emph{partition functions}. In~\cite{CSZ13}, we formulated general conditions on the pure model
that are consistent with the Harris criterion $d < 2/\nu$ for disorder relevance,
which allowed us to construct explicitly the continuum partition functions. However, the {\em marginally relevant} case ($d=2/\nu$ in the Harris criterion) escapes the framework proposed in~\cite{CSZ13}.

In the present work, we develop a novel approach to study the continuum limit of marginally relevant systems of directed polymer type, which include
the usual short-range directed polymer model on $\Z^2$,
the long-range directed polymer model on $\Z$ with Cauchy tails, and
the pinning model with tail exponent $\alpha=1/2$.
We show that, surprisingly, there is a common underlying structure among all these marginally relevant models (see Section \ref{ss:heur} and Key Proposition \ref{thm:main1}), which leads to a number of universal phenomena. More precisely,

\begin{itemize}
\item A properly defined
{\em replica overlap} $\Ro_{N}$ for each model diverges as a slowly varying function
(usually a logarithm)
of the polymer length $N\to\infty$.

\item If the disorder strength is sent to $0$
as $\beta_N=\hbeta/\sqrt{\Ro_{N}}$ for fixed $\hbeta>0$, then
the partition function has a universal limit in distribution, {\em irrespective of the model}:
\begin{equation} \label{eq:limit}
	Z_{N,\beta_N}
	\xrightarrow[\,N\to\infty\,]{d}
	\bs{Z}_{\hat\beta}
	\overset{d}{=} \begin{cases}
	\text{log-normal} & \text{ if } \hat \beta < 1 \\
	0 & \text{ if } \hat\beta \ge 1
	\end{cases} \,.
\end{equation}
with the log-normal variable depending on the parameter $\hat\beta$.
\item A process-level version of \eqref{eq:limit} also holds:
for $\hat\beta < 1$, the family of log partition functions $\log Z_{N, \beta_N}(x)$,
indexed by the starting point $x$ of the polymer, converges to a limiting Gaussian
random field (depending on $\hat\beta$) with an explicit \emph{multi-scale covariance structure}.

\end{itemize}

The transition from a non-degenerate limit $\bs{Z}_\hbeta >0$ to a degenerate limit
$\bs{Z}_\hbeta=0$, as $\hbeta$ increases,
marks a transition from {\em weak disorder} to
{\em strong disorder}.
We emphasize that such a transition for marginally relevant models, in particular, the $(2+1)$-dimensional directed polymer,
is new and has not been anticipated. Previously, it was only known (see e.g.~\cite{CSY04}) that for the directed polymer in dimension $d+1$,
there is a transition from weak to strong disorder at a critical $\beta_c(d)$, with $\beta_c(d)>0$ when $d\geq 3$ (corresponding to disorder
irrelevance) and $\beta_c(d)=0$ when $d=1,2$ (corresponding to disorder relevance).
(For $d=2$ the polymer was shown in~\cite{F12}
to be diffusive if $\beta_N \ll 1/\sqrt{\Ro_{N}}$.)

Interestingly, our results show that in the marginal dimension $d=2$, there is still a transition on the finer scale of $\beta=\hat\beta/\sqrt{\Ro_N}$, with critical value $\hat\beta_c=1$. This appears to be a special
feature of marginality, since no such transition exists at any finer scale of disorder in dimension $d=1$~\cite{AKQ14a}.

Another point worth remarking is that the explicitly identified critical point $\hbeta=1$ is actually the point where the $L^2$ norm of the partition functions blow up in the limit. This is in contrast to the directed polymer in dimension $d+1$ with $d\geq 3$ (see e.g.\ \cite{BS10}),
or the log-correlated Gaussian Multiplicative Chaos \cite{RV13} which also undergoes a weak to strong disorder transition.
For these two models, their critical points are strictly larger than their respective $L^2$ critical points.

\smallskip

Our results unify different polymer models that are classified as marginally relevant.
However, beyond this universality, even more interesting is the method we develop, which reveals a multi-scale and Gaussian chaos structure that is common to all the models we consider. In particular, the partition functions can be approximated by a sum of stochastic integrals involving
white noises in all possible dimensions, which through resummation, can be seen as the exponential of a Gaussian (see Section~\ref{ss:proofsteps} for an outline of the main steps). The key technical ingredients include a non-trivial combinatorial argument (Proposition~\ref{thm:main1}), and the application of a version of the Fourth Moment Theorem~\cite{dJ90,NP05} for Gaussian approximation.
\medskip

An interesting corollary of our results is that they link marginal relevant models to a class of singular SPDEs at the {\em critical dimension}.
In particular, they bring new insights on how to define the solution of the
{\em two-dimensional} Stochastic Heat Equation (2d SHE), which is formally written as
\begin{equation}\label{she0}
\frac{\partial u(t,x)}{\partial t} =
\frac{1}{2}\Delta u(t,x) + \beta \, \dot{W}(t,x) \, u(t,x), \qquad u(0, \cdot) \equiv 1,
\end{equation}
for $(t,x) \in [0,\infty) \times \R^2$,
$\beta>0$ and $\dot{W}$ is the space-time white noise.
\smallskip

Rigorously defining the solution of \eqref{she0} remains a difficult open problem
due to ill-defined terms such as $\dot{W}u$. In special cases,
such as the one-dimensional SHE, it was shown in \cite{BC95} that a solution
can be defined by first mollifying $\dot{W}$
and then sending the mollification parameter to zero.
But there was no systematic approach
to make sense of singular SPDEs until recent breakthroughs by
Hairer~\cite{H13, H14}, through {\em Regularity Structures},
and by Gubinelli, Imkeller and Perkowski~\cite{GIP15},
through \emph{Paracontrolled Distributions} (see also Kupiainen~\cite{K14} for
field theoretic approach).
However, these approaches do not cover the {\em critical} dimension two for the SHE,
and the singular SPDEs that can be treated so far are all known as {\em sub-critical}
(or {\em super-renormalizable} in the physics literature~\cite{K14}).

\smallskip

It turns out that the notion of {\em sub-criticality} for singular SPDEs
matches with the notion of disorder relevance, while {\em criticality} corresponds to the
case where the effect of disorder is marginal.
To illustrate this fact for the SHE, consider the change of variables
\begin{equation*}
	(t,x) = T_\eps(\tilde t, \tilde x) := (\eps^{-2}\tilde t, \eps^{-1}\tilde x) \,,
\end{equation*}
which for small $\eps > 0$ corresponds to a space-time coarse graining
transformation.
Looking at \eqref{she0},
it is easily seen that $\tilde u(\tilde t, \tilde x):=u(T_\eps(\tilde t, \tilde x))$ formally
solves the SPDE
\begin{equation}\label{she1}
\frac{\partial \tilde u}{\partial \tilde t}
= \frac{1}{2}\Delta \tilde u + \beta \eps^{\frac{d}{2}-1}  \dot{\tilde W}  \tilde u, \qquad \tilde u(0,\cdot)\equiv 1,
\end{equation}
where $\dot{\tilde W}$ is a new space-time White noise obtained from $\dot W$ via scaling.
Therefore, coarse-graining space-time for the SHE has the effect of changing the strength of the
noise to $\eps^{\frac{d}{2}-1}\beta$ which, as $\eps\to 0$, diverges for $d=1$
(disorder relevance), vanishes for $d\geq 3$ (disorder irrelevance), and remains
unchanged for $d=2$ (marginality).

Since the difficulties in studying the regularity
properties of an SPDE are related to small scale
divergences, it is interesting to \emph{blow up}
space-time, i.e.\ consider the change of variables $(t,x) = T_{\epsilon^{-1}}(\tilde t, \tilde x)$.
This leads to a renormalized equation which is just \eqref{she1}
with $\epsilon$ replaced by $\epsilon^{-1}$, hence blowing-up space time
produces an effective noise strength which
behaves reciprocally with respect to coarse-graining,
i.e.\ vanishes as $\epsilon \to 0$ for $d=1$ and diverges for $d \ge 3$.
This explains why the SHE with $d=1$ can be analyzed by~\cite{H14,GIP15,K14}.

\smallskip

Since the solution of the SHE can be interpreted as the partition function of a continuum directed polymer via a generalized Feynman-Kac formula~\cite{BC95},  our result for the two-dimensional directed polymer implies a similar result for the 2d SHE. More precisely, if we consider the mollified 2d SHE
\begin{equation}\label{she2}
\frac{\partial u^\eps}{\partial t} = \frac{1}{2}\Delta u^\eps + \beta_\eps \dot{W}^\eps  u^\eps , \qquad u^\eps(0, \cdot)  \equiv 1,
\end{equation}
where $\dot{W}^\eps$ is the space-mollification of $\dot{W}$ via convolution with a smooth probability density $j_\eps(x):= \eps^{-2}j(x/\eps)$ on $\R^2$, and the noise strength is scaled as $\beta_\eps= \hbeta \sqrt{\frac{2\pi}{\log \eps^{-1}}}$ for some $\hbeta>0$, then for each $(t,x)\in (0,\infty)\times \R^2$, $u^\eps(t,x)$ converges (as $\eps\to 0$) in distribution to the same universal
limit $\bs{Z}_{\hat\beta}$ in \eqref{eq:limit} as for the other marginally relevant models.
\smallskip

We hope that the method we develop and the universal structure we have uncovered
opens the door to further understanding of marginally relevant models in general, including
both statistical mechanics models that are not of directed polymer type,
as well as critical singular SPDEs with non-linearity. In particular, our results suggest that
for marginally relevant models there is a transition in the effect of disorder on an
intermediate disorder scale. Establishing this transition in general, as well as understanding
the behavior of the models at and above the transition point, will be the key challenges next.

 \section{The models and our results}
\label{sec:results}

In this section we define our models of interest and state our main results.
We will denote $\N := \{1,2,3,\ldots\}$ and $\N_0 := \N \cup \{0\}$.
\smallskip

\subsection{The models}\label{ss:models}

We first introduce the disorder $\omega$. Let
$\omega = (\omega_{{}_\sfX})$ be a family of i.i.d.\ random variables, indexed by $\sfX\in\N$ or
$\sfX= (x,n) \in \Z^d \times \N_0$,
depending on the model. Probability and expectation for $\omega$ will be
denoted respectively by $\bbP$ and $\bbE$. We assume that
\begin{equation*}
	\bbE[\omega_1] = 0 \,, \qquad \ \bbvar[\omega_1] = 1 \,,
	\qquad \ \exists \beta_0 > 0: \ \ \lambda(\beta) :=
	\log \bbE[e^{\beta \omega_1}] < \infty \ \ \forall |\beta| < \beta_0 \,.
\end{equation*}

Next we define the class of models we consider.
We fix a reference probability law $\P$ (which will typically be the law of a random walk
or a renewal process) representing the ``pure'' model.
The disordered model is then a Gibbs perturbation
$\P_{N,\beta}^\omega$ of $\P$, indexed by the parameters $N \in \N$ (polymer length),
$\beta \ge 0$ (disorder strength) and the disorder $\omega$:
\begin{equation*}
	\frac{\dd \P_{N,\beta}^\omega}{\dd\P}(\cdot)
	:= \frac{e^{H_{N,\beta}^\omega(\cdot)}}{Z_{N,\beta}^\omega}
\end{equation*}
for a suitable Hamiltonian $H_{N,\beta}^\omega$. The normalizing constant
\begin{equation*}
	Z_{N,\beta}^\omega := \E\left[e^{H_{N,\beta}^\omega} \right]
\end{equation*}
is the \emph{disordered partition function} and will be the focus of this paper.

Different reference laws $\P$ and Hamiltonians $H_{N,\beta}^\omega$ give rise to different models. The first class of models we will consider are \emph{directed polymers in random environment} on $\Z^{d+1}$.

\begin{definition}[Directed polymers on $\Z^{d+1}$]\label{def:dp}\rm
Let $S = (S_n)_{n\in\N_0}$ be a random walk on $\Z^d$ with i.i.d.\ increments.
For $(x,t) \in \Z^d \times \N_0$ we denote by $\P_{x,t}$ the law of
$(S_n)_{n \ge t}$ started at $x$ at time $t$, and we denote $\P:= \P_{0,0}$ for simplicity.
The partition function of the
\emph{directed polymer in random environment}
is defined by
\begin{equation}\label{eq:Zdp}
	Z_{N,\beta}^\omega(x,t) := \E_{x,t} \left[
	e^{\sum_{n=t+1}^N \left( \beta \, \omega_{(n,S_n)} - \lambda(\beta) \right)} \right]
\end{equation}
with $Z_{N,\beta}^\omega := Z_{N,\beta}^\omega(0,0)$.
\end{definition}

We will also consider \emph{pinning models}, which can be viewed as directed polymers on $\Z^{d+1}$ with
disorder present only at $x=0$ (i.e.\ $\omega(n,x) = 0$ for $x \ne 0$). In this case, what really matters are the return times of the random walk $S$
to $0$, which form a \emph{renewal process}.

\begin{definition}[Pinning models]\label{def:pinning}\rm
Let $(\tau = (\tau_n)_{n\in\N_0}, \P_t)$ be a renewal process started
at $t\in\N_0$, i.e., $\P_t(\tau_0 = t) = 1$
and $(\tau_n - \tau_{n-1})_{n\in\N}$ are i.i.d.\ $\N$-valued random variables.
If $t=0$ we write $\P = \P_0$. The partition function of the
\emph{pinning model} started at $t\in\N_0$ equals
\begin{equation}\label{eq:Zpin}
	Z_{N,\beta}^{\omega}(t) :=
	\E_t \left[ e^{\sum_{n=t+1}^N
	\left( \beta \omega_n - \lambda(\beta) \right) \ind_{\{n \in \tau\}}} \right] \,,
\end{equation}
with $Z_{N,\beta}^{\omega} := Z_{N,\beta}^{\omega}(0)$,
where we have identified $\tau$ with the random set $\{\tau_0, \tau_1, \ldots\}\subset \N_0$.
\end{definition}

\begin{remark}\rm
In the pinning model it is customary to have a
bias parameter $h \in \R$, i.e.\ $-\lambda(\beta)$ is replaced by $-\lambda(\beta)+h$
in \eqref{eq:Zpin}. In this paper we set $h=0$ because in the regime we are interested
in, the effects of $\beta$ and $h$ can be decoupled. This will be treated elsewhere.
\end{remark}

Note that $Z_{N,\beta}^\omega$ in \eqref{eq:Zdp}--\eqref{eq:Zpin} has been normalized so that
$\bbE [Z_{N,\beta}^\omega]= 1$
(due to $-\lambda(\beta)$).
The key question we consider (in connection with disorder relevance) is the following:

\begin{enumerate}
\renewcommand{\theenumi}{\textbf{Q.}}%
\renewcommand{\labelenumi}{{\theenumi}}
\item\label{enu} \emph{Can one tune the disorder
strength $\beta = \beta_N \to 0$ as $N\to\infty$ in such a way that the partition function
$Z_{N,\beta_N}^\omega$ converges in law to a non-degenerate random variable?}
\end{enumerate}
The answer depends crucially on the random walk $S$ and the renewal process
$\tau$. Assume that $S$ and $\tau$ are in the domain of attraction of a stable law, with respective
index $\alpha \in (0,2]$ and $\alpha \in (0,1)$.  Informally,
this means that $\P(|S_1| > n) \approx n^{-\alpha}$ and $\P(\tau_1 > n) \approx n^{-\alpha}$
(except for $\alpha = 2$, where $\bbE[|S_1|^2] < \infty$
or, more generally, $x \mapsto \E[|S_1|^2 \ind_{\{|S_1|\le x\}}]$ is slowly varying).
\smallskip

It was shown in~\cite{CSZ13} that question \ref{enu} has an affirmative answer for directed polymers on $\Z^{1+1}$ with $\alpha \in (1, 2]$
and for pinning models with $\alpha \in (1/2, 1)$,  which is a manifestation of \emph{disorder relevance}; while disorder is irrelevant for directed polymers on $\Z^{1+1}$ with $\alpha\in (0,1)$
and for pinning models with $\alpha \in (0, 1/2)$. However, the \emph{marginal cases}
\begin{aenumerate}
\item\label{it:a} directed polymers on $\Z^{2+1}$ with $\alpha = 2$ (e.g., finite variance);
\item\label{it:b} directed polymers on $\Z^{1+1}$ with $\alpha = 1$ (e.g., Cauchy tails);
\item\label{it:c}
pinning models with tail exponent $\alpha = 1/2$ (e.g., the renewal arising from the return times of the simple symmetric random walk on $\Z$ to the origin),
\end{aenumerate}
fall out of the scope of the method in~\cite{CSZ13}.

In this paper, we develop a novel approach to answer question \ref{enu}
affirmatively for marginally relevant models.
Even though our techniques are of wider applicability, we stick for simplicity
to models of type \eqref{it:a}--\eqref{it:c} above.
Let us state our precise assumptions, in the form of local limit theorems,
where we allow for arbitrary slowly varying function $L(\cdot)$. However, we suggest to
keep in mind the basic case when $L(\cdot)$ is constant, say $L(\cdot) \equiv 1$.

\begin{hypothesis}[Local Limit Theorem]\label{hypoth}
\rm\label{hyp}
Assume that the directed polymer in Definition~\ref{def:dp} and the pinning model in Definition~\ref{def:pinning} satisfy the following local limit theorems, for some slowly varying function $L$.
\begin{aenumerate}
\item {[$d=2$]} \emph{Directed polymer on $\Z^{2+1}$ with $\alpha = 2$ $($short range$)$.}

\begin{equation} \label{eq:llt2}
	\sup_{x\in\Z^2} \left\{ L(n)^2 n \, \P(S_n = x) -
	g\bigg(\frac{x}{L(n)\sqrt{n}} \bigg) \right\}
	\xrightarrow[n\to\infty]{} 0 \,,
\end{equation}
where $g(x) := \frac{1}{2\pi} e^{-\frac{1}{2}|x|^2}$ denotes the standard Gaussian density
on $\R^2$.

\item {[$d=1$]}
\emph{Directed polymer on $\Z^{1+1}$ with $\alpha = 1$ $($long-range with Cauchy tails$)$.}

\begin{equation} \label{eq:llt1}
	\sup_{z\in\Z} \left\{ L(n)^2 n \, \P(S_n = x) - g\bigg(\frac{x}{L(n)^2 n} \bigg) \right\}
	\xrightarrow[n\to\infty]{} 0 \,,
\end{equation}
where $g(x) := \frac{1}{\pi} \frac{1}{1+x^2}$ denotes the Cauchy density on $\R$.

\item {[$d=0$]} \emph{Pinning model with $\alpha = \frac{1}{2}$.}
\begin{equation} \label{eq:llt0}
	L(n) \sqrt{n} \; \P(n\in\tau) \xrightarrow[n\to\infty]{} c \in (0,\infty) \,.
\end{equation}

\end{aenumerate}
\end{hypothesis}

\begin{remark}\rm\label{rem:suff}
Conditions \eqref{eq:llt2}--\eqref{eq:llt1}
hold whenever $S$ is an aperiodic random walk on $\Z^d$
in the domain of attraction of the Gaussian ($d=2$), resp.\ Cauchy ($d=1$) distribution:
\begin{equation}\label{eq:CLT}
	\cL\Big(\frac{S_n}{\phi(n)}\Big) \ \xrightarrow[\ n\to\infty\ ]{weak} \ g(x) \, \dd x
	\qquad \quad \text{with} \qquad
	\phi(n) := \big(L(n)^2 \, n\big)^{1/d} \,,
\end{equation}
by Gnedenko's
local limit theorem, cf.~\cite[Theorem 8.4.1]{cf:BinGolTeu}
(we denote by $\cL(\cdot)$ the law of a random variable).

Condition \eqref{eq:llt0} holds whenever $\P(\tau_1 = n) \sim c' \frac{L(n)}{n^{3/2}}$
as $n\to\infty$~\cite[Thm.~B]{D97}.
\end{remark}

\begin{remark}[2d Simple random walk]\rm\label{rem:ape}
When $S$ is the simple symmetric random walk on $\Z^2$, due to periodicity, \eqref{eq:llt2} still holds
(with $L(\cdot ) \equiv 1$)
provided the $\sup$ is restricted to the sub-lattice $z \in \{(a,b) \in \Z^2: a+b = n \ (\text{mod } 2)\}$
(whose cells have area $2$) and $g(\cdot)$ is replaced by $2g(\cdot)$. Consequently, relation \eqref{eq:RN} holds with $C = 2 \|g\|_2^2$. Our main results Theorems~\ref{thm:subcritical},
\ref{thm:short-cor} and~\ref{thm:field} below apply with no further change.
\end{remark}

A crucial common feature among all models \eqref{it:a}--\eqref{it:c} above concerns the so-called expected {\em replica overlap},
defined for a general random walk $S$ or renewal process $\tau$ by
\begin{equation} \label{eq:replica}
	\Ro_N := \begin{cases}
	\displaystyle \, \E\Big[\sum_{n=1}^N \ind_{\{S_n = S'_n\}} \Big] = \sum_{1 \le n \le N, \, x \in \Z^d}
	\!\! \P(S_n = x)^2 & \\
	\rule{0pt}{1.6em}
	\displaystyle \E\Big[ \sum_{n=1}^N \ind_{\{n \in \tau \cap \tau'\}} \Big]= \quad \sum_{1 \le n \le N}
	\P(n \in \tau)^2 &
	\end{cases} \,,
\end{equation}
where $S'$ and $\tau'$ are independent copies of $S$ and $\tau$.
For models satisfying Hypothesis~\ref{hypoth}, a Riemann sum approximation using
\eqref{eq:llt2}--\eqref{eq:llt0} yields
\begin{equation} \label{eq:RN}
	\Ro_N \underset{N\to\infty}{\sim} C \, \sum_{n=1}^N \frac{1}{L(n)^2 n} \,,
	\qquad \ \text{where} \qquad
	C = \begin{cases}
	\|g\|_2^2 & \text{(directed polymers)}\\
	c^2 & \text{(pinning)}
	\end{cases} \,.
\end{equation}
This shows that \emph{$N \mapsto \Ro_N$ is a slowly varying function},
cf.\ \cite[Proposition 1.5.9a]{cf:BinGolTeu}, a fact which plays a crucial in our analysis.
Whether $\Ro_N$ stays bounded or diverges as $N\to\infty$
will determine whether disorder is relevant or irrelevant. This leads to

\begin{definition}[Marginal overlap condition]\label{def:margrel}\rm
A directed polymer or a pinning model
is said to satisfy the {\em marginal overlap condition}, if $\Ro_N \to \infty$
as a slowly varying function when $N\to\infty$, where $\Ro_N$ is defined in \eqref{eq:replica}.
\end{definition}

\noindent
Under Hypothesis~\ref{hypoth},
the {\em marginal overlap condition} is satisfied when $\Ro_N\to \infty$,
which by \eqref{eq:RN} holds if $L(n)$ stays bounded, or more generally,
does not grow too fast as $n\to\infty$.
We suggest the reader to keep in mind the basic case $L(n) \equiv 1$,
for which $\Ro_N \sim C \log N$.

\smallskip

Our main result, to be stated in the next subsection, is that question \ref{enu} has an
affirmative answer for models of directed polymer type
which satisfy Hypothesis \ref{hypoth} and the marginal overlap condition.
This is a signature of marginal disorder relevance in the spirit of \cite{CSZ13}.
The recent results of Berger and Lacoin
\cite{BL15a,BL15b} on free energy and critical curves
reinforce this picture.

\medskip

\subsection{Results for directed polymer and pinning models}
\label{sec:main}

We are now ready to state our main results: Theorem~\ref{thm:subcritical} on the convergence of partition function with a fixed starting point; Theorem~\ref{thm:short-cor} on the joint limit of partition functions with different starting points, where multi-scale correlations emerge; and Theorem~\ref{thm:field} on the Gaussian fluctuations of the partition functions as a random field indexed by the starting points.

\begin{theorem}[Limit of partition functions]\label{thm:subcritical}
Let $Z_{N,\beta}^\omega$ be the partition function of
a directed polymer or a pinning model
(cf.\ Definitions~\ref{def:dp} and~\ref{def:pinning}).
Assume that Hypothesis~\ref{hyp} holds
and the replica overlap $\Ro_N$ in \eqref{eq:replica} and \eqref{eq:RN}
diverges as $N\to\infty$. Then, defining
\begin{equation}\label{eq:betascaling}
	\beta_N := \frac{\hat\beta}{\sqrt{\Ro_N}} \,, \qquad
	\text{with} \quad \hat\beta \in (0,\infty) \,,
\end{equation}
the following convergence in distribution holds:
\begin{equation} \label{eq:conve}
	Z_{N,\beta_N}^\omega \xrightarrow[\,N\to\infty\,]{d}
	\bs{Z}_{\hat\beta} :=
	\begin{cases}
	\exp\bigg(\sigma_{\hat\beta} \, W_1 - \frac{\sigma_{\hat\beta}^2}{2}
	\bigg)
	& \text{if } \hat \beta < 1 \\
	0 & \text{if } \hat\beta \ge 1
	\end{cases} \,.
\end{equation}
where $W_1$ is a standard Gaussian random variable and
\begin{equation}\label{eq:sigmahatbeta}
	\sigma_{\hat\beta}^2 := \log \frac{1}{1-\hat\beta^2} \,.
\end{equation}
Moreover, for $\hat\beta < 1$ one has $\lim_{N\to\infty}\bbE[(Z_{N,\beta_N}^\omega)^2]
= \bbE[(\bs{Z}_{\hat\beta})^2]$.
\end{theorem}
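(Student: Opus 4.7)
The plan is to establish the result through a polynomial chaos expansion of the partition function, whose moments are controlled by the slowly varying replica overlap $\Ro_N$, and then to invoke a Fourth Moment (de Jong / Nualart--Peccati) Theorem to identify the limit as a log-normal. Writing $\xi_{n,x} := e^{\beta\omega_{(n,x)}-\lambda(\beta)}-1$, one has $\bbE[\xi_{n,x}]=0$ and $\sigma_\beta^2:=\bbvar(\xi_{n,x})=e^{\lambda(2\beta)-2\lambda(\beta)}-1\sim \beta^2$. Expanding the exponential in \eqref{eq:Zdp} and taking the walk-expectation inside yields the multilinear representation
\begin{equation*}
Z_{N,\beta}^\omega = 1+\sum_{k=1}^N \sum_{\substack{0<n_1<\cdots<n_k\le N\\ x_1,\ldots,x_k\in\Z^d}} \P(S_{n_i}=x_i\,\forall i)\,\prod_{i=1}^k \xi_{n_i,x_i} =: 1+\sum_{k=1}^N T_N^{(k)},
\end{equation*}
with the analogous expansion for the pinning model in terms of $\xi_n = e^{\beta\omega_n-\lambda(\beta)}-1$ and the kernel $\prod_i\ind_{\{n_i\in\tau\}}$.

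The first step is the second-moment asymptotics. Using independence of the $\xi$'s and the Markov/i.i.d.-increment structure, $\bbvar(T_N^{(k)}) = \sigma_\beta^{2k}\sum_{0<n_1<\cdots<n_k\le N}\prod_i u(n_i-n_{i-1})$, where $u(n):=\sum_x\P(S_n=x)^2$ (resp.\ $u(n):=\P(n\in\tau)^2$) satisfies $\sum_{n=1}^N u(n)=\Ro_N$. Substituting $m_i=n_i-n_{i-1}$, the sum becomes $\Ro_N^k\cdot\bbP(X_1+\cdots+X_k\le N)$ where $X_i$ are i.i.d.\ with law $u(\cdot)/\Ro_N$ on $\{1,\ldots,N\}$. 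Because $\Ro_N$ is slowly varying, the mass of each $X_i$ is spread logarithmically across all scales from $1$ to $N$, so the constraint $\sum m_i\le N$ is asymptotically harmless and $\bbP(\cdot)\to 1$ for every fixed $k$. Combined with $\sigma_{\beta_N}^{2k}\sim (\hat\beta^2/\Ro_N)^k$, this gives $\bbvar(T_N^{(k)}) \to \hat\beta^{2k}$, and summation yields $\lim_N\bbE[(Z_{N,\beta_N}^\omega)^2]=\sum_{k\ge 0}\hat\beta^{2k}=1/(1-\hat\beta^2) = \bbE[\bs{Z}_{\hat\beta}^2]$, proving the final assertion for $\hat\beta<1$.

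The second step is the distributional convergence for $\hat\beta<1$. Introduce the truncation $Z_N^{\le M}:=1+\sum_{k=1}^M T_N^{(k)}$. The variance asymptotics above show $\sup_N \bbE[(Z_{N,\beta_N}^\omega - Z_N^{\le M})^2]\le \sum_{k>M}\hat\beta^{2k}+o(1)\to 0$ as $M\to\infty$, so by a standard $L^2$-truncation argument it suffices to establish the joint distributional convergence of $(T_N^{(1)},\ldots,T_N^{(M)})$ to the matching chaos decomposition of $\bs{Z}_{\hat\beta}$ for every $M$. This is precisely where the Fourth Moment Theorem enters: for each fixed $k$, $T_N^{(k)}$ is a homogeneous multilinear form in the independent centered $\xi$'s, and the theorem reduces joint Gaussian-chaos convergence to (a) convergence of second moments (done above) and (b) vanishing of the $L^2$-contractions of the kernels $q_{\vec n,\vec x}:=\P(S_{n_i}=x_i\,\forall i)$, equivalently $\bbE[(T_N^{(k)})^4]\to 3\,\bbE[(T_N^{(k)})^2]^2$. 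The identification of the limit as the Hermite-chaos expansion of $\exp(\sigma_{\hat\beta}W_1-\sigma_{\hat\beta}^2/2)$ then follows by matching moments together with the observation that $\bs{Z}_{\hat\beta}$ is the unique log-normal having the prescribed chaos-level second moments $\hat\beta^{2k}$.

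For the strong-disorder regime $\hat\beta\ge 1$, the $L^2$ machinery fails. The plan is a comparison argument: exploit that $\bs{Z}_{\hat\beta'}=\exp(\sigma_{\hat\beta'}W_1-\tfrac12\sigma_{\hat\beta'}^2)\to 0$ almost surely as $\hat\beta'\nearrow 1$ (since $\sigma_{\hat\beta'}\to\infty$), combined with a monotonicity/coupling argument (e.g.\ Lindeberg interpolation or a Jensen-type bound at the level of the tilted measures) to upgrade the sub-critical convergence into an upper bound $Z_{N,\beta_N^{(\hat\beta)}}^\omega \le (\text{concentrated factor})\cdot Z_{N,\beta_N^{(\hat\beta')}}^\omega$ that is asymptotically vanishing for $\hat\beta'$ close to but below $1$. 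Passing $\hat\beta'\nearrow 1$ then forces $Z_{N,\beta_N^{(\hat\beta)}}^\omega\to 0$ in probability for every $\hat\beta\ge 1$. The main technical obstacle is clearly Step 2: executing the Fourth Moment Theorem uniformly in $N$ requires delicate control of iterated overlap sums $\sum q_{\vec n,\vec x} q_{\vec n',\vec x'}(\cdots)$ evaluated with the Hypothesis \ref{hyp} local limit inputs, and every such sum is borderline divergent due to marginality; only the very specific scaling $\beta_N=\hat\beta/\sqrt{\Ro_N}$ together with the slow-variation of $\Ro_N$ (the content of the paper's Key Proposition) makes the contractions vanish.
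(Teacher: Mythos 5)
There is a genuine gap, and it sits exactly at the heart of your Step 2. You propose to apply the Fourth Moment Theorem directly to the full chaos components $T_N^{(k)}$, claiming $\bbE[(T_N^{(k)})^4]\to 3\,\bbE[(T_N^{(k)})^2]^2$ and hence joint Gaussian-type convergence, followed by an identification of the limit "by matching chaos-level second moments $\hat\beta^{2k}$". Both claims are false for $k\ge 2$. In the marginal regime the limit of $T_N^{(k)}$ is \emph{not} Gaussian and is \emph{not} the $k$-th Wiener chaos of a single Brownian motion: already for $k=2$ the limit is (in the paper's notation) an independent $N(0,\tfrac12)$ coming from a genuinely two-dimensional white noise, plus $\tfrac12\big(W_1^2-1\big)$; its fourth moment is $6$, not $3$, so the contraction/fourth-moment criterion fails at the level of $T_N^{(k)}$. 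Moreover, the moment-matching identification cannot work even in principle: the $k$-th Wiener chaos component of $\bs{Z}_{\hat\beta}=\exp(\sigma_{\hat\beta}W_1-\tfrac12\sigma_{\hat\beta}^2)$ has variance $\sigma_{\hat\beta}^{2k}/k!$ with $\sigma_{\hat\beta}^2=\log\frac{1}{1-\hat\beta^2}$, which differs from $\hat\beta^{2k}$; no log-normal has chaos-level variances $\hat\beta^{2k}$, so the limits of the $T_N^{(k)}$ do not line up level-by-level with the chaos decomposition of the limit of the sum. This scrambling across chaos levels is precisely the phenomenon that distinguishes the marginal case from the disorder-relevant case, and your plan, as written, collapses it back to the relevant-case scheme where it cannot hold.

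What is actually needed (and what the paper does) is an intermediate multi-scale decomposition: cut the time axis into $M$ blocks $I_1,\ldots,I_M$ chosen so that each contributes $\Ro_N/M$ to the overlap, write $T_N^{(k)}$ as a sum of block variables $\Theta^{N;M}_{\bi}$ indexed by $\bi\in\{1,\ldots,M\}^k$, use the local limit theorem of Hypothesis~\ref{hyp} to factorize $\Theta^{N;M}_{\bi}$ asymptotically over the \emph{dominated sequences} of $\bi$, and only then apply the Fourth Moment Theorem — to the dominated-sequence building blocks, not to $T_N^{(k)}$ — showing they become i.i.d.\ standard Gaussians; the log-normal with variance $\sigma_{\hat\beta}^2$ emerges only after a nontrivial resummation over all $k$ and all block indices (this is where $\log\frac{1}{1-\hat\beta^2}$ comes from). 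Your second-moment computation (Step 1) is correct and matches the paper, and your sketch for $\hat\beta\ge 1$ is in the right spirit, though it leaves unspecified the actual comparison tool — the paper proves monotonicity in $\beta$ of the fractional moments $\bbE[(Z^\omega_{N,\beta})^\theta]$ via FKG, then combines uniform integrability with the subcritical limit and lets $\hat\beta'\nearrow 1$; without some such monotonicity statement the "upper bound by a concentrated factor" you invoke is not established.
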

\begin{remark}\rm
Note that for $\hat\beta < 1$, $\bs{Z}_{\hat\beta}$ is log-normal.
Let $(W_t)_{t\ge 0}$ be a standard Brownian motion. We will in fact prove that
\begin{equation}\label{eq:conve0}
	Z_{N,\beta_N}^\omega
	\xrightarrow[\,N\to\infty\,]{d}
	\displaystyle
	\exp\bigg( \displaystyle\int_0^1\textstyle
	\frac{\hat\gb }{\sqrt{1-\hat\gb^2 \,t}} \, \dd W_t-
	\frac{1}{2} \displaystyle\int_0^1\textstyle
	\frac{\hat\gb^2\ }{1-\hat\gb^2\,t} \, \dd t \bigg) \overset{d}{=} \bs{Z}_{\hat\beta}
	 \,.
\end{equation}
This more involved expression for $\bs{Z}_{\hat\beta}$ hints at a remarkable underlying
multi-scale and chaos structure, which is common to
all models that satisfy Hypothesis~\ref{hyp} and the marginal overlap condition.
The heuristics for this structure will be
explained in Sec.~\ref{heuristics}.

It is even possible to identify the limiting distribution \emph{of
the whole process $(Z_{N,\beta_N}^\omega)_{\hbeta\in (0,1)}$}.
Denoting by $(W^{(r)}_t)_{t \ge 0, \, r \in \N}$ a countable family of independent
Brownian motions, we have the convergence
in distribution of $Z_{N,\beta_N}^\omega$ as $N\to\infty$,
jointly for $\hbeta \in (0,1)$, to the process
\begin{equation}
	\prod_{r=1}^\infty \exp\bigg( \displaystyle \int_0^1
	\hbeta^r \, t^{\frac{r-1}{2}} \, \dd W^{(r)}_t-
	\tfrac{1}{2} \int_0^1
	\hbeta^{2r} \, t^{r-1} \, \dd t \bigg)
	\overset{d}{=} \bs{Z}_{\hat\beta} \,.
\end{equation}
This can be extracted from the proof of Lemma \ref{prop:limCG}, and we will omit the details.

It is worth noting the non-trivial dependence
of $\sigma_{\hat\gb}^2$ on $\hat\beta$, cf.\ \eqref{eq:sigmahatbeta}. On the one hand it
distinguishes from other scalings such as $\gb_N \ll 1/\sqrt{\Ro_N}$, which lead to
a trivial behavior, and on the other hand it marks the transition
from weak ($\bs{Z}_{\hat\beta} > 0$) to strong ($\bs{Z}_{\hat\beta} = 0$) disorder.
\end{remark}

\begin{remark}\rm
During the completion of this paper, Alberts, Clark and
Koci\'c showed in \cite{AKS15} that for the
marginally relevant directed polymer model on the diamond hierarchical lattice, with either edge or
site disorder, there is also a transition for the partition
function in an intermediate disorder regime
with some critical value $\hbeta_c$.
Their proof relies on the recursive structure of the hierarchical lattice.
A difference with respect to our results is that,
for $\hbeta\leq \hbeta_c$, the partition function converges to $1$ and has Gaussian fluctuations.
It would be interesting to apply our
approach to better understand the source of this difference.
\end{remark}

\begin{remark}\rm
One may wonder whether the assumption of finite exponential moments
$\bbE[e^{\beta \omega_1}] < \infty$ can be relaxed. Indeed,
for the usual (short-range) directed polymer model in dimension $d=1$,
in the intermediate disorder regime
it is enough to assume finite six moments, as conjectured in
Alberts-Khanin-Quastel \cite{AKQ14a}
and proved by Dey-Zygouras \cite{DZ16}.
The heuristic in dimension $d=1$ is that if $\bbP(\omega_1 > t) \sim t^{-a}$,
the typical maximum of the disorder random variables visited
by the random walk by time $N$ is $N^{\frac{3}{2a}}$.
The intermediate disorder scaling in dimension
$d=1$ is $\beta_N=\hat \beta N^{-1/4} $, so one has
$\beta_N N^\frac{3}{2a} \to 0$ when
$a>6$, allowing for a truncation argument.
In dimension $d=2$, the typical maximum is
$N^{2/a}$, while $\beta_N=\hat \beta/\sqrt{\log N}$,
so $\beta_N N^{2/a} \to \infty$ irrespective of $a$.
This suggests that in the critical dimension
$d=2$, things
are more subtle and we are reluctant to make any  claim.

\end{remark}

\smallskip

Next we study the partition functions
$Z_{N,\beta}^\omega(\sfX)$
as a random field,
indexed by the polymer's starting position
$\sfX = (x,t) \in \Z^d \times \N_0$ with $d\in \{1,2\}$
(for directed polymers), resp.\ $\sfX = t \in \N_0$ (for pinning models).
Assuming Hypothesis \ref{hypoth} with a slowly varying $L(\cdot)$
and a divergent overlap $\Ro_N$, and recalling \eqref{eq:CLT}, we define
\begin{equation}\label{eq:phiarrow}
	\phi^{\leftarrow}(|x|) \; := \; \min\big\{n\in\N_0: \ \phi(n) \geq |x| \big\}
	\; = \; \min\big\{n\in\N_0: \ (nL(n)^2)^{1/d}\geq |x| \big\} \,.
\end{equation}
By \eqref{eq:CLT}, $\phi^{\leftarrow}(|x|)$ is
the time at which the random walk $S$ has a fluctuation of order $|x|$.
Then, for each $\sfX=(x,t)\in \Z^d \times \N_0$ with $d\in \{0,1,2\}$, we set
\begin{equation}\label{Xtripnorm}
	\vvvert \sfX \vvvert := \begin{cases}
	t & \text{if } d=0 \\
	t \vee \phi^{\leftarrow}(|x|)  & \text{if } d=1,2	\end{cases}\,.
\end{equation}
We suggest to keep in mind the special case $L(n) \equiv 1$,
for which $\vvvert \sfX \vvvert = t \vee |x|^d$.

Theorem~\ref{thm:subcritical} gives the limiting distribution of
the individual partition functions $Z_{N,\beta_N}^\omega(\sfX)$,
and it is natural to ask about the joint distributions.
In the special case $L(n) \equiv 1$, i.e.\ $\Ro_N\sim C\log N$,
partition functions $Z_{N,\beta_N}^\omega(\sfX)$
and $Z_{N,\beta_N}^\omega(\sfX')$
with macroscopically distant starting points
$\vvvert \sfX - \sfX' \vvvert = N^{1+o(1)}$
become \emph{asymptotically independent}
as $N\to\infty$, while
an interesting correlation structure emerges \emph{on all intermediate scales}
$\vvvert \sfX - \sfX' \vvvert= N^{\zeta+o(1)}$,
for any $\zeta \in (0,1)$.
For general slowly varying functions $L(n)$, when $\Ro_N$ is not necessarily
logarithmic, intermediate scales
are encoded by $\Ro_{\vvvert \sfX - \sfX' \vvvert}/\Ro_N =\zeta+o(1)$.
This is the content of the next theorem,
where we use the shorthand notation $:e^Y: \ = e^{Y - \frac{1}{2}\bbvar[Y]}$.

\begin{theorem}[Multi-scale correlations]\label{thm:short-cor}
Let $Z_{N,\beta}^\omega(\sfX)$ be the partition function of
a directed polymer $($or pinning$)$ model started at
$\sfX=(x,t) \in \Z^d \times \N_0$ (cf.\ Def.~\ref{def:dp}
and~\ref{def:pinning}), such that Hypothesis~\ref{hyp} holds
and the replica overlap $\Ro_N$ in \eqref{eq:replica}--\eqref{eq:RN}
diverges as $N\to\infty$.

Consider a finite collection of space-time points $(\sfX_N^{(i)})_{1 \le i \le r}$, such that
as $N\to\infty$,
\begin{equation} \label{eq:condi}
	\begin{split}
	\forall\, 1\leq k,l\leq r \,: \qquad
	& \Ro_{N-t_N^{(k)}}/\Ro_N =1-o(1) \,, \\
	& \Ro_{\vvvert \sfX_N^{(k)} - \sfX_N^{(l)} \vvvert}/\Ro_N = \zeta_{k,l}+o(1)
	\ \ \text{for some} \ \
	\zeta_{k,l} \in [0,1]\,.
\end{split}
\end{equation}
Then, for $\beta_N=\hbeta/\sqrt{\Ro_N}$ with $\hat\beta \in (0,1)$,
the following joint convergence in distribution holds:
\begin{equation}\label{fddconv}
	\big( Z_{N,\gb_N}^\go(\sfX_N^{(i)}) \big)_{1 \le i \le r}
	\ \xrightarrow[N\to\infty]{d} \
	\big( \colon e^{\sfY_i}\colon \big)_{1 \le i \le r} \,,
\end{equation}
where $(\sfY_i)_{1 \le i \le r}$ are jointly Gaussian random variables with
 \begin{equation}\label{meanvar}
 \bbE[\sfY_i] = 0 \,, \qquad
 \bbcov[\sfY_i , \,\sfY_j] = \log \frac{1-\hat\beta^2 \zeta_{i,j}}{1-\hat\beta^2} \,.
 \end{equation}
 \end{theorem}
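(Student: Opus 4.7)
The plan is to extend the chaos-expansion and Fourth-Moment-Theorem scheme behind Theorem \ref{thm:subcritical} from a single starting point to a vector of starting points. First, for each $i = 1, \ldots, r$ expand the partition function as a polynomial chaos $Z_{N,\beta_N}^\omega(\sfX_N^{(i)}) = \sum_{m \ge 0} Z_{N,m}^{(i)}$ in the normalized disorder variables $\eta_\sft = e^{\beta_N \omega_\sft - \lambda(\beta_N)} - 1$, with the $m$-th term running over $m$-step polymer paths from $\sfX_N^{(i)}$ weighted by the walk/renewal kernel $q$. The $L^2$ estimates from Theorem \ref{thm:subcritical} allow us to truncate at $m \le M$ with an error that is small uniformly in $N$, reducing the joint convergence to an analysis of finitely many chaos orders.

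The core computation is the joint second moment at each fixed order $m$. By orthogonality across chaos orders,
\begin{equation*}
\bbE\!\left[Z_{N,m}^{(k)} Z_{N,m}^{(l)}\right] = \sigma_{\beta_N}^{2m} \!\!\!\!\! \sum_{\sft_1 \prec \cdots \prec \sft_m} \!\!\!\! q(\sfX_N^{(k)}, \sft_1)\, q(\sfX_N^{(l)}, \sft_1) \prod_{j=2}^m q(\sft_{j-1}, \sft_j)^2 \,.
\end{equation*}
The asymmetry between the two starting points appears only in the first ``link.'' Using Chapman--Kolmogorov and the local limit theorem in Hypothesis \ref{hyp}, one shows $\sum_{\sft_1} q(\sfX_N^{(k)}, \sft_1)\, q(\sfX_N^{(l)}, \sft_1) \sim \Ro_N - \Ro_{\vvvert \sfX_N^{(k)} - \sfX_N^{(l)} \vvvert}$, i.e., the two walks behave as one after they have had time to meet, by splitting into small scales $n_1 \ll \vvvert \sfX_N^{(k)} - \sfX_N^{(l)} \vvvert$ (negligible, since the walks cannot yet meet) and large scales (diagonal asymptotics, where they are indistinguishable). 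The remaining diagonal factors contribute $\Ro_N^{m-1}(1+o(1))$ exactly as in the one-point analysis, using the slow variation of $\Ro$ and the first condition in \eqref{eq:condi} to absorb the starting-time offsets. Multiplying by $\sigma_{\beta_N}^{2m} \sim (\hat\beta^2/\Ro_N)^m$ gives $\bbE[Z_{N,m}^{(k)} Z_{N,m}^{(l)}] \to (1 - \zeta_{k,l}) \hat\beta^{2m}$ for $m \ge 1$, and summing over $m$ yields
\begin{equation*}
\lim_N \bbE\!\left[Z_{N,\beta_N}^\omega(\sfX_N^{(k)})\, Z_{N,\beta_N}^\omega(\sfX_N^{(l)})\right] = 1 + (1-\zeta_{k,l})\frac{\hat\beta^2}{1-\hat\beta^2} = \frac{1-\hat\beta^2\zeta_{k,l}}{1-\hat\beta^2},
\end{equation*}
which is precisely $e^{\bbcov(\sfY_k, \sfY_l)}$ for the target covariance \eqref{meanvar}.

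For the joint Gaussianity of the exponents, I would apply the multivariate Peccati--Tudor Fourth Moment Theorem at each fixed order $m$ to the vector $(Z_{N,m}^{(i)})_{i=1}^r$: the diagonal fourth-moment condition is inherited from the proof of Theorem \ref{thm:subcritical}, and the covariance matrix is identified by the previous step. Combining the chaos-order-wise joint limits, using the combinatorial resummation implicit in the multi-scale representation \eqref{eq:conve0}, and controlling the tail in $m$ via Step 1 yields the joint convergence $(Z_{N,\beta_N}^\omega(\sfX_N^{(i)}))_i \Rightarrow (\colon e^{\sfY_i}\colon)_i$ with the claimed covariance. I expect the main technical obstacle to be the uniform local-limit-theorem refinement: one must show $\sum_{\sft_1} q(\sfX_N^{(k)}, \sft_1) q(\sfX_N^{(l)}, \sft_1) = (1-\zeta_{k,l})\Ro_N(1+o(1))$ together with a factorized asymptotic against the remaining diagonal chain, uniformly as the meeting scale $\vvvert \sfX_N^{(k)} - \sfX_N^{(l)} \vvvert$ ranges over all intermediate orders from $1$ to $N$. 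This rests delicately on the slow variation of $\Ro$ and on both conditions in \eqref{eq:condi}.
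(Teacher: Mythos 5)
Your second–moment computation is essentially right and matches what the paper establishes at the block level (the first ``link'' contributes $\Ro_N-\Ro_{\vvvert \sfX_N^{(k)}-\sfX_N^{(l)}\vvvert}\sim(1-\zeta_{k,l})\Ro_N$, the rest contributes $\Ro_N^{m-1}$), and summing over orders does reproduce $\bbE[\colon e^{\sfY_k}\colon\,\colon e^{\sfY_l}\colon]=\frac{1-\hbeta^2\zeta_{k,l}}{1-\hbeta^2}$. The genuine gap is in the distributional step. You propose to apply the multivariate Fourth Moment Theorem to the full chaos orders $(Z_{N,m}^{(i)})_{1\le i\le r}$ and claim the fourth-moment condition is ``inherited from the proof of Theorem~\ref{thm:subcritical}''. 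It is not: in that proof the Fourth Moment Theorem is applied only to the block variables $\Theta^{N;M}_{\bi}$ indexed by \emph{dominated} sequences, never to $Z_N^{(m)}$ itself. For $m\ge 2$ the chaos order $Z_N^{(m)}$ is \emph{not} asymptotically Gaussian — already for $m=2$ its limit contains the term $\tfrac{1}{2}\big((W^{(1)}_1)^2-1\big)$ (cf.\ \eqref{eq:Z2appr}), so $\bbE[(Z_N^{(2)})^4]\not\to 3\,\bbE[(Z_N^{(2)})^2]^2$ and the hypothesis of the theorem you invoke fails. Worse, if each order were jointly Gaussian and orthogonal across orders, the partition functions themselves would have a Gaussian limit, contradicting the log-normal limit $\colon e^{\sfY_i}\colon$ you are trying to prove; so the strategy as stated cannot produce the correct limit law.

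The missing idea is that matching limiting covariances of the partition functions does not identify their joint law; one must carry the multi-scale structure across the different starting points. The paper does this by extending the key step to shifted starting points (Proposition~\ref{P:keystepG}): the block variables $\Theta^{N;M}_{\bi}(\sfX_N^{(k)})$, for dominated $\bi$ and all $k$ simultaneously, converge to a Gaussian family with covariance $\ind_{\{\bi=\bi'\}}\ind_{\{i_1/M>\zeta_{k,l}\}}$ — i.e.\ blocks at scales finer than the separation decouple between starting points, while blocks at coarser scales coincide. Only then do the approximation steps \textbf{(A1)--(A3)} and the resummation of Lemma~\ref{prop:limCG} (now with \emph{correlated} Brownian motions $W^{(j)}$ satisfying $\bbE[W^{(k)}_T W^{(l)}_S]=\int_{\zeta_{k,l}}^{T\wedge S}\dd t$) yield $\colon e^{\sfY_j}\colon$ with $\bbcov(\sfY_k,\sfY_l)=\int_{\zeta_{k,l}}^1\frac{\hbeta^2}{1-\hbeta^2 t}\,\dd t$. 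Your appeal to ``the combinatorial resummation implicit in \eqref{eq:conve0}'' presupposes exactly this joint block-level information, which your argument never establishes; to repair the proof you would need to prove the analogue of Proposition~\ref{P:keystepG} (the covariance dichotomy across scales for the Gaussian building blocks) rather than a fourth-moment statement for whole chaos orders.
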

\medskip

Lastly, we study $Z_{N,\beta_N}^\omega(\sfX)$  as a space-time random field on the macroscopic scale $\vvvert\sfX\vvvert \approx N$,
showing that it satisfies a law of large numbers with Gaussian fluctuations. For $\sfX = (x,t) \in \Z^d \times \N_0$,
we define space-time rescaled variables as follows (recall $L(\cdot)$ from Hypothesis~\ref{hyp}
and $\phi(\cdot)$ from \eqref{eq:CLT}):
\begin{equation}\label{eq:rescale}
	\widehat \sfX_N :=
	\big( \hat x_N, \hat t_N \big) :=
	\bigg( \frac{x}{\phi(N)} \,, \frac{t}{N} \bigg)  \,,
\end{equation}
where pinning models correspond to $d=0$ and
we drop $x$.
We first observe that $\Ro_N L(N)^2 \to \infty$ as $N\to\infty$,
by \eqref{eq:RN} and \cite[Prop.~1.5.9a]{cf:BinGolTeu}.
We are going to show that one has
\begin{equation} \label{eq:Zheur}
	Z_{N,\beta_N}^\omega(\sfX) \approx 1 + \frac{1}{\sqrt{\Ro_N L(N)^2}}
	\, G\big(\widehat\sfX_N\big)
\end{equation}
where $G(\,\cdot\,)$ is a generalized Gaussian random field on $\R^d \times [0,1]$,
with a logarithmically divergent covariance kernel (see \eqref{eq:kappakernel} below).
To make \eqref{eq:Zheur} precise, we fix a continuous test function
$\psi: \R^d \times [0,1] \to \R$ with compact support and define
\begin{equation} \label{eq:J0}
	J^\psi_N := \frac{1}{\phi(N)^d N}
	\sum_{\sfX \in \bbZ^d \times \N_0}
	\Big\{ \sqrt{\Ro_N L(N)^2} \,\big(Z_{N,\gb_N}^{\go}(\sfX)-1\big)
	\Big\} \, \psi\big( \widehat \sfX_N\big) \,,
\end{equation}
where the pre-factor is the correct Riemann-sum normalization, in agreement with \eqref{eq:rescale}.
We can now formulate our next result.

\begin{theorem}[Fluctuations of the rescaled field]\label{thm:field}
Let $Z_{N,\beta}^\omega(\sfX)$ be the partition function of
a directed polymer or pinning model started at
$\sfX \in \Z^d \times \N_0$ (cf.\ Def.~\ref{def:dp}
and~\ref{def:pinning}), such that Hypothesis~\ref{hyp} holds
and the replica overlap $\Ro_N$ in \eqref{eq:replica}--\eqref{eq:RN}
diverges as $N\to\infty$.

Fix any continuous function $\psi: \R^d \times [0,1] \to \R$ with compact support,
and let $\beta_N=\hbeta/\sqrt{\Ro_N}$ with $\hbeta < 1$.
Then $J_N^\psi$ in \eqref{eq:J0}
converges in distribution as $N\to\infty$ to a centered Gaussian random
variable $N(0, \sigma_\psi^2)$ with variance
\begin{equation}\label{eq:sigmapsi}
	\sigma_\psi^2 := \frac{\hbeta^2}{1-\hbeta^2}
	\int_{(\R^d \times [0,1])^2} \psi(x,t) \,
	K\big((x,t), (x',t')\big) \, \psi(x',t') \, \dd x \, \dd t \, \dd x' \, \dd t' \,,
\end{equation}
where the covariance kernel is given by
\begin{equation}\label{eq:kappakernel}
	K\big( (x_1, t_1), (x_2, t_2) \big) :=
	\begin{cases}
	\displaystyle
	\frac{1}{2} \int_{|t_1-t_2|}^{2-(t_1+t_2)} \frac{1}{s} \,
	g\bigg( \frac{x_1-x_2}{s^{1/d}} \bigg) \, \dd s
	& (\mbox{directed polymers}) \\
	\displaystyle
	\rule{0pt}{2em}
	\int_{t_1 \vee t_2}^{1} \frac{c^2}{\sqrt{s-t_1}\sqrt{s-t_2}} \, \dd s
	& (\mbox{pinning})
	\end{cases} \,.
\end{equation}
\end{theorem}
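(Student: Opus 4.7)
The plan is to use the polynomial chaos expansion
\[
Z_{N,\beta_N}^\omega(\sfX) - 1 = \sum_{k \geq 1} Z^{(k)}_{N,\beta_N}(\sfX), \qquad Z^{(k)}_{N,\beta_N}(\sfX) := \!\!\sum_{t < n_1 < \cdots < n_k \leq N}\sum_{\by} \P_\sfX\bigl(S_{n_i} = y_i \ \forall i\bigr) \prod_{i=1}^k \zeta_{(n_i, y_i)},
\]
in the centered i.i.d.\ variables $\zeta_{(n,y)} := e^{\beta_N \omega_{(n,y)} - \lambda(\beta_N)} - 1$ (with the obvious modification to time-only indexing for pinning). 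Substituted into \eqref{eq:J0} this gives the decomposition $J_N^\psi = \sum_{k \geq 1} J_N^{\psi, (k)}$ into discrete chaoses of pure order.

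The first step is the variance computation. Orthogonality of distinct chaos orders gives $\var(J_N^\psi) = \sum_k \var\bigl(J_N^{\psi,(k)}\bigr)$, and the Markov property yields
\[
\cov\!\bigl(Z^{(k)}(\sfX), Z^{(k)}(\sfX')\bigr) = \bbE[\zeta_1^2]^k \sum_{n_1<\cdots<n_k \leq N} U_{n_1}(\sfX,\sfX') \prod_{i=2}^k R(n_i - n_{i-1}),
\]
with $U_n(\sfX,\sfX') := \sum_y \P_\sfX(S_n=y)\,\P_{\sfX'}(S_n=y)$ and $R(m) := \sum_y \P(S_m=y)^2$. Slow variation of $\Ro_N$ allows one to replace the inner $(k-1)$-fold convolution by $\Ro_N^{k-1}(1+o(1))$, and since $\bbE[\zeta_1^2]\sim \beta_N^2 = \hat\beta^2/\Ro_N$ one obtains
\[
\var\bigl(J_N^{\psi,(k)}\bigr) \sim \hat\beta^{2k} \cdot \frac{L(N)^2}{\phi(N)^{2d}N^2} \sum_{\sfX,\sfX'} \psi(\widehat\sfX_N)\,\psi(\widehat\sfX'_N) \sum_n U_n(\sfX,\sfX').
\]
Hypothesis~\ref{hyp} (via the LLT applied to the difference $S-S'$ of two independent walks in the polymer case, or a direct product of the renewal asymptotics in the pinning case) gives the pointwise scaling limit $L(N)^2 \sum_n U_n(\sfX,\sfX') \to K(\widehat\sfX_N, \widehat\sfX'_N)$ with the kernel of \eqref{eq:kappakernel}, and the remaining prefactor is precisely the Riemann-sum normalization for the four-fold integration in $(\widehat\sfX_N, \widehat\sfX'_N)$. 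Hence $\var\bigl(J_N^{\psi,(k)}\bigr) \to \hat\beta^{2k} \int\psi K\psi$, and summing the geometric series in $k$ produces $\sigma_\psi^2$ as in \eqref{eq:sigmapsi}.

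The second step is the Gaussian limit. Each $J_N^{\psi,(k)}$ is a homogeneous polynomial of degree $k$ in the centered i.i.d.\ variables, so a version of the Fourth Moment Theorem for discrete chaoses yields convergence to a centered Gaussian provided the fourth cumulant is $o(\var^2)$ and the maximal single-variable influence is negligible. Its multivariate version then gives joint convergence of $\bigl(J_N^{\psi,(1)},\ldots,J_N^{\psi,(K)}\bigr)$ to independent centered Gaussians, the asymptotic independence being a consequence of orthogonality of distinct chaos orders in the Gaussian limit. A truncation estimate, namely $\var\bigl(J_N^\psi - J_N^{\psi,\leq K}\bigr) \leq \hat\beta^{2(K+1)}/(1-\hat\beta^2) \cdot \int\psi K \psi$ up to $o(1)$ in $N$, allows one to send $K\to\infty$ and conclude $J_N^\psi \xrightarrow{d} N(0,\sigma_\psi^2)$.

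The main obstacle is verifying the fourth-moment condition uniformly in the order $k$. Expanding $\bbE[(J_N^{\psi,(k)})^4]$ produces many terms indexed by partial pairings of chaos indices across four copies of the expansion; showing that the non-fully-paired (non-Gaussian) configurations are asymptotically negligible relative to the squared variance requires a combinatorial analysis of overlaps in the spirit of Proposition~\ref{thm:main1}, in which slow variation of $\Ro_N$ is once again the key ingredient that suppresses diagonal contributions.
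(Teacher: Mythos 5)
Your overall architecture (chaos decomposition by order, order-by-order variance computation, fourth moment theorem plus Lindeberg, truncation in $k$) is coherent, and your variance computation is essentially sound: the covariance factorization through $U_{n_1}$ and the replica kernel, the replacement of the inner convolution by $\Ro_N^{k-1}(1+o(1))$, and the Riemann-sum identification of $K$ in \eqref{eq:kappakernel} reproduce the correct $\hbeta^{2k}\int\psi K\psi$ per order and the geometric series giving \eqref{eq:sigmapsi}. This is a genuinely different organization from the paper, which never proves a CLT for the full order-$k$ chaos: it coarse-grains time into blocks $I_1,\ldots,I_M$, proves Gaussianity only for single block variables $\Theta^{N,M;\psi}_{\bi}$ (Proposition~\ref{KeyStep2}), and recovers the Gaussian limit by observing that only blocks with first index $i_1=M$ survive the $\psi$-average, so the limit is a sum of independent Gaussians.

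However, your key step as described has a genuine gap. You assert that the non-Gaussian contributions to $\bbE[(J_N^{\psi,(k)})^4]$ are the ``non-fully-paired'' configurations and that they are suppressed ``in the spirit of Proposition~\ref{thm:main1}'' by slow variation of $\Ro_N$. This cannot be the whole argument: the dangerous configurations are the \emph{fully paired but interleaved} ones (a single connected component in the paper's terminology), and for the unaveraged partition function exactly these are \emph{not} negligible — which is why $Z_N^{(k)}$ for $k\ge 2$ has a non-Gaussian limit, cf.\ \eqref{eq:Z2appr}, even though $\Ro_N$ is just as slowly varying there. In Proposition~\ref{thm:main1} the interleaved pairings are killed by the scale separation built into dominated sequences, a structure you have discarded by working with the full order-$k$ chaos. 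What actually rescues the fourth-moment condition for $J_N^{\psi,(k)}$ is the $\psi$-average itself: the weight $q^{N,\psi}_{n_1}(x_1)=\sum_{\sfX}\psi(\widehat\sfX_N)q_{n_1-n_0}(x_1-x_0)$ concentrates the variance on configurations whose first time increment is of order $N$, so in an interleaved pairing the two replica chains are anchored at space-time points separated on the macroscopic scale, and each cross-correlation sum $\sum_{n,y}q_{n-a}(y-u)\,q_{n-c}(y-v)$ then contributes $O(1)$ (more precisely $O(\Ro_N-\Ro_{|a-c|})$) instead of $\Ro_N$, with the near-diagonal set $|a-c|=N^{o(1)}$ carrying negligible $\psi$-weight. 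This is precisely the content the paper isolates in Proposition~\ref{KeyStep2} (only $i_1=M$ survives, and the modified Cases 2 and 4 there use the bound on $q^{N,\psi}$). Until you articulate and prove this averaging mechanism — rather than invoking slow variation alone, which would equally ``prove'' the false Gaussianity of $Z_N^{(k)}$ — the central step of your proposal is unsubstantiated. (Two minor points: uniformity in $k$ of the fourth-moment verification is not needed, since your truncation bound lets you treat each fixed $k\le K$; and the LLT you need is for $q_n$ itself applied twice plus a Riemann sum, not for the difference walk $S-S'$.)
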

\begin{remark}\rm
Observe that the kernel $K$ diverges logarithmically near the diagonal:
\begin{equation*}
	K\big( (x_1, t_1), (x_2, t_2) \big) \sim
	C \, \log \frac{1}{| (x_1, t_1) - (x_2, t_2) |} \qquad \text{as}
	\quad | (x_1, t_1) - (x_2, t_2) | \to 0 \,.
\end{equation*}
Note that Gaussian fields with such logarithmically divergent covariance kernels have played a central role in the theory of Gaussian Multiplicative Chaos (see e.g.~\cite{RV13}).
\end{remark}

\subsection{Results for the 2d stochastic heat equation}\label{ss:SHE}

We now state the analogues of Theorems \ref{thm:subcritical}, \ref{thm:short-cor} and \ref{thm:field} for the $2d$ SHE
\begin{equation}\label{2dSHE}
\frac{\partial u}{\partial t} = \frac{1}{2}\Delta u + \beta u \dot{W}, \qquad u(0, x)  = 1 \ \forall \, x\in \R^2.
\end{equation}

To make sense of \eqref{2dSHE}, we first
mollify the space-time white noise $\dot{W}$. Let $j\in C^\infty_c(\R^2)$ be a probability density on $\R^2$ with $j(x)=j(-x)$, and let $J := j*j$. For $\epsilon>0$, let
$j_\epsilon(x) := \epsilon^{-2} j(x/\epsilon)$. The mollified noise $\dot{W}^\epsilon$ is defined formally by
$\dot{W}^\epsilon(t, x)  := \int_{\R^2} j_\epsilon(x-y) \dot{W}(t, y)\dd y$,
so that
$$
\int_{\R\times \R^2} f(t,x) \dot{W}^\epsilon(t, x)\dd t\dd x := \int_{\R\times\R^2} \Big(\int_{\R^2} f(t,x) j_\epsilon(y-x) \dd x\Big)\dot{W}(t,y) \dd t\dd y
\quad \forall\, f\in L^2(\R\times \R^2).
$$
For fixed $x$, the process $t \mapsto \int_0^t \dot{W}^\epsilon(s,x) \, \dd s$ is a
Brownian motion with variance $\| j \|_2^2$. Then we
consider the mollified equation (with It\^o integration, and $\beta = \beta_\eps$ possibly
depending on~$\eps$)
\begin{equation}\label{2dSHEeps}
\frac{\partial u^\eps}{\partial t} = \frac{1}{2}\Delta u^\eps + \beta_\eps u^\eps \dot{W}^\eps, \qquad u^\eps(0, \cdot)  \equiv 1,
\end{equation}
whose solution admits the generalized Feynman-Kac representation \cite[Sec.~3 and (3.22)]{BC95}
\begin{equation}\label{Feynman}
u^\eps(t,x) = \E_x\Big[\exp\Big\{\beta_\eps \int_0^t \dot{W}^\eps(t-s, B_s)\dd s -\frac{1}{2}\beta_\eps^2 \,\,\bbE\Big[\Big(\int_0^t \dot{W}^\eps(t-s, B_s)\dd s\Big)^2\Big] \Big\}\Big],
\end{equation}
where $\E_x$ is expectation w.r.t.\ $(B_s)_{s\geq 0}$, a standard Brownian motion in $\R^2$ with
$B_0=x$ and $\bbE$ denotes the expectation with respect to the White noise.
By a time reversal in $\dot{W}^\eps$, we note that $u^\eps(t,x)$ has the same distribution
(for fixed $(t,x)$) as
\begin{align}
\tilde u^\eps(t,x) & \, := \, \E_x\Big[\exp\Big\{\beta_\eps \int_0^t \dot{W}^\eps(s, B_s)\dd s -\frac{1}{2}\beta_\eps^2 \E\Big[\Big(\int_0^t \dot{W}^\eps(s, B_s)\dd s\Big)^2\Big]\Big\}\Big] \nonumber \\
& \, = \, \E_x\Big[\exp\Big\{\beta_\eps \int_0^t \int_{\R^2} j_\eps(B_s-y) \dot{W}(s, y) \dd s\dd y - \frac{1}{2}\beta_\eps^2 t \Vert j_\eps\Vert_2^2\Big\}\Big]  \nonumber \\
& \, = \, \E_{\eps^{-1}x}\Big[\exp\Big\{\beta_\eps \int_0^{\eps^{-2}t} \int_{\R^2} j(B_{\tilde s}-\tilde y) \dot{\tilde W}(\tilde s, \tilde y) \dd \tilde s\dd \tilde y - \frac{1}{2}\beta_\eps^2 (\eps^{-2}t) \Vert j\Vert_2^2\Big\}\Big] \,, \label{Feynman1.1}
\end{align}
where in the last step we made the change of variables $(\eps \tilde y, \eps^2 \tilde s):=(y,s)$, and $\dot{\tilde W}(\tilde s, \tilde y)\dd \tilde s \dd \tilde y:=\eps^{-2}\dot W(\eps^2 \tilde s, \eps \tilde y)\dd (\eps^2 \tilde s) \dd (\eps\tilde y)$ is another two-dimensional space-time white noise.
(One can actually extend \eqref{Feynman1.1} so that the equality in law between $u^\eps(t,x)$
and $\tilde u^\eps(t,x)$ holds jointly for all $t \in [0,1]$ and $x \in \R^2$, see \eqref{FKrec} below.)

Relation \eqref{Feynman1.1} suggests that
we can interpret $\tilde u^\eps(t,x)$ as the partition function of a directed Brownian polymer in $\R^2$ in a white noise space-time random environment at inverse temperature $\beta_\eps$, with starting point $\eps^{-1}x$ and polymer length $\eps^{-2}t$.
A consequence of our results for the short-range directed polymer on $\Z^2$ is the following analogue of Theorems \ref{thm:subcritical} and \ref{thm:short-cor}, combined into a single theorem.
Let us agree that $\vvvert \sfX \vvvert := t \vee |x|^2$.

\begin{theorem}[Limits of regularized solutions]\label{T:SHE}
Let $u^\eps(t,x)$ be the solution of the regularized 2d SHE \eqref{2dSHEeps}, with $\beta_\eps= \hbeta \sqrt{\frac{2\pi}{\log \eps^{-1}}}$ for some $\hbeta\in (0,\infty)$. Following the notation in Theorem~\ref{thm:short-cor}, consider a finite collection of space-time points $\sfX_\eps^{(i)} =(x_\eps^{(i)}, t_\eps^{(i)})$, $1 \le i \le r$, such that as $\eps\to 0$,
\begin{equation*}
	\forall i,j \in \{1,\ldots,r\}: \quad t^{(i)}_\eps = \eps^{o(1)}, \quad
	\vvvert \sfX_\eps^{(i)} - \sfX_\eps^{(j)} \vvvert = \eps^{2(1-\zeta_{i,j})+o(1)}
	\ \ \text{for some} \ \
	\zeta_{i,j} \in [0,1]\,.
\end{equation*}
Then for $\hat\beta < 1$, $\big( u^\eps(\sfX_\eps^{(i)}) \big)_{1 \le i \le r}$ converge in joint distribution  to the same limit $\big( \colon e^{\sfY_i}\colon \big)_{1 \le i \le r}$ as in \eqref{fddconv} as $\eps \to 0$, with $\bbE[(u^\eps(\sfX_\eps^{(i)}))^2] \to \bbE[( \colon e^{\sfY_i}\colon)^2]$; while for $\hbeta\geq 1$, $u^\eps(\sfX_\eps^{(i)})\Rightarrow 0$.
\end{theorem}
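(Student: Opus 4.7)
The plan is to reduce Theorem~\ref{T:SHE} to a Wiener-chaos/Fourth-Moment analysis that runs in parallel with the proof of Theorems~\ref{thm:subcritical} and~\ref{thm:short-cor}. By \eqref{Feynman1.1} it suffices to work with $\tilde u^\eps$, which is the partition function of a continuum Brownian polymer of length $T=\eps^{-2}t$ at starting point $\eps^{-1}x$ and inverse temperature $\beta_\eps$. The first step is to identify the continuum replica overlap and check that the scaling matches. By Itô isometry applied to the Wick exponential,
\begin{equation*}
	\bbE\bigl[\tilde u^\eps(t,x)^2\bigr] \;=\; \E\bigg[\exp\!\Big(\beta_\eps^2\!\int_0^{\eps^{-2}t}\! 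J\bigl(B_s-B'_s\bigr)\,\dd s\Big)\bigg],
\end{equation*}
where $J=j\ast j$ and $B,B'$ are independent planar Brownian motions. Since $\E[J(B_s-B'_s)]=(J\ast p_{2s})(0)$ and $p_{2s}(0)=\frac{1}{4\pi s}$, one gets $\E\bigl[\int_1^{\eps^{-2}t} J(B_s-B'_s)\,\dd s\bigr]\sim \frac{\log\eps^{-1}}{2\pi}$, i.e.\ the continuum overlap is $\Ro^\eps\sim \log\eps^{-1}/(2\pi)$; this is precisely the prescription $\beta_\eps=\hbeta/\sqrt{\Ro^\eps}$ that appears in \eqref{eq:betascaling} (and matches the discrete overlap for the two-dimensional simple random walk of Remark~\ref{rem:ape}).

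Next I would perform the Wiener chaos expansion
\begin{equation*}
	\tilde u^\eps(t,x) \;=\; 1 + \sum_{k\ge 1} \beta_\eps^k \! \int_{0<s_1<\cdots<s_k<\eps^{-2}t}\!\!\int_{(\R^2)^k} \Psi_k^\eps\bigl((s_i,y_i)_{i\le k};\eps^{-1}x\bigr)\,\prod_{i=1}^k \dot{\tilde W}(\dd s_i,\dd y_i),
\end{equation*}
where $\Psi_k^\eps$ is built from the Brownian transition density smeared by $j$; this is the exact continuum analogue of the polynomial chaos expansion underlying Key Proposition~\ref{thm:main1}. A direct computation shows that $\beta_\eps^{2k}\|\Psi_k^\eps\|_2^2$ converges to $(\hbeta^2)^k/k$ as $\eps\to 0$, so that $\sum_k \beta_\eps^{2k}\|\Psi_k^\eps\|_2^2\to -\log(1-\hbeta^2)=\sigma_{\hbeta}^2$, matching \eqref{eq:sigmahatbeta}. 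I would then apply the Fourth Moment Theorem of de~Jong/Nualart--Peccati to the truncated sum $\sum_{k=1}^K I_k(\Psi_k^\eps)$, followed by a tail bound in $L^2$ that uses precisely the same combinatorial input as in the discrete case, to deduce convergence in distribution to the log-normal $\colon e^{\sfY}\colon$ of Theorem~\ref{thm:subcritical}. For the multi-point statement, the joint convergence reduces to computing the asymptotic covariances $\bbcov[I_k(\Psi_k^{\eps,(i)}),I_k(\Psi_k^{\eps,(j)})]$: under $\vvvert\sfX_\eps^{(i)}-\sfX_\eps^{(j)}\vvvert=\eps^{2(1-\zeta_{i,j})+o(1)}$, the Brownian polymers decouple below the scale $\eps^{2(1-\zeta_{i,j})}$ and contribute a fraction $\zeta_{i,j}$ of the total overlap, yielding the log-covariance $\log\frac{1-\hbeta^2\zeta_{i,j}}{1-\hbeta^2}$ of \eqref{meanvar}. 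The supercritical regime $\hbeta\ge 1$ follows from the divergence of $\bbE[\tilde u^\eps(t,x)^2]$ combined with the uniform-integrability/truncation argument already used for the discrete partition functions, giving convergence to $0$.

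The main technical obstacle is verifying that the mollifier $j_\eps$ generates exactly the logarithmic divergence $\frac{1}{2\pi}\log\eps^{-1}$, and not merely an order-of-magnitude upper bound: all the chaos coefficients must converge to their claimed limits, not just be controlled. A second subtlety is that \eqref{Feynman1.1} is stated marginally in $(t,x)$, whereas the multi-point claim in Theorem~\ref{T:SHE} requires a joint statement; this is handled by applying Feynman--Kac to $u^\eps$ directly (at fixed realization of $\dot W$) and only afterwards performing the change of variables in the driving noise, yielding a joint-in-$\sfX$ representation of $u^\eps$ as a family of Brownian polymers in a single mollified white noise, to which the chaos analysis above then applies without modification.
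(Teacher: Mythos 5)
There is a genuine gap at the heart of your plan. You propose to apply the Fourth Moment Theorem to the truncated chaos sum $\sum_{k=1}^K I_k(\Psi_k^\eps)$ and conclude convergence to the log-normal limit; but the Fourth Moment Theorem produces \emph{Gaussian} limits, and neither the truncated sum nor the individual chaos components of order $k\ge 2$ converge to Gaussians here (as the paper's heuristics for $Z_N^{(2)}$ show, the $k$-th chaos converges to a mixture of Wiener chaoses of all orders up to $k$). The Gaussian behavior only emerges after the multi-scale surgery that is the paper's key contribution: splitting each chaos order into time-blocks $\Theta^{N;M}_{\bi}$ via the intervals $I_i$ of \eqref{aimP}, discarding near-diagonal indices, factorizing asymptotically over dominated sequences (steps (A1)--(A3)), applying the Fourth Moment Theorem only to the dominated-sequence blocks (Key Proposition \ref{thm:main1}), and then resumming (step (A4)) to produce the log-normal. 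Your proposal contains none of this, and the vague ``tail bound using the same combinatorial input'' does not substitute for it. The slip in your variance bookkeeping is symptomatic: $\beta_\eps^{2k}\|\Psi_k^\eps\|_2^2 \to \hbeta^{2k}$, not $\hbeta^{2k}/k$; the identity $\sum_k \hbeta^{2k}/k = \sigma_{\hat\beta}^2$ concerns the variance of the Gaussian in the \emph{exponent} of the limit, not the chaos components of $u^\eps$, and with your coefficients the second moment would tend to $1+\log\frac{1}{1-\hbeta^2}$ instead of the correct $\bbE[(\colon e^{\sfY}\colon)^2]=\frac{1}{1-\hbeta^2}$. Likewise, for the multi-point statement, matching covariances of chaos components does not determine a joint limit when the limits are non-Gaussian; one again needs the block-level joint Gaussian convergence (the analogue of Proposition \ref{P:keystepG}).

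Your route also diverges from the paper's in a way worth noting: the paper does \emph{not} redo the marginal-relevance analysis in the continuum. It couples the mollified noise to discrete disorder by setting $\eta_{n,x}:=\int_{\Delta_{n,x}}\widetilde W(\dd s,\dd y)$, and runs a chain of $L^2$ approximations — restricting to well-separated times $T_{k,\eps}$, replacing the smeared kernels $p_{t_i-t_{i-1}}(y_i-y_{i-1})$ by $p_{t_i-t_{i-1}}(x_i-x_{i-1})$, and then by discrete random-walk kernels via Gnedenko's local CLT — so that Theorem \ref{thm:short-cor} for the $\Z^{2+1}$ polymer applies verbatim with $N=\eps^{-2}$. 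If you insist on staying in the continuum you must reprove the analogues of (A1)--(A4) and Proposition \ref{thm:main1} there, which is precisely what is missing. Finally, for $\hbeta\ge 1$, divergence of $\bbE[(\tilde u^\eps)^2]$ by itself implies nothing about convergence to zero; the paper's argument is monotonicity in $\beta$ of the fractional moments $\bbE[(\tilde u^\eps)^\theta]$ (via $\beta W \overset{d}{=}\beta_1 W_1+\beta_2 W_2$ and Jensen), combined with the subcritical limit as $\hbeta\nearrow 1$ and uniform integrability. On the positive side, you correctly identified the overlap normalization $\log\eps^{-1}/(2\pi)$ and the need to upgrade \eqref{Feynman1.1} from a fixed-point identity in law to a joint representation, which the paper handles through \eqref{FKrec}.
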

\begin{remark} \rm
Applying Hopf-Cole transformation to \eqref{2dSHEeps}, we note that $h^\eps(t,x):= \log u^\eps(t,x)$ is the solution of the regularized 2d KPZ equation
\begin{equation}
\frac{\partial h^\eps}{\partial t} = \frac{1}{2}\Delta h^\eps + \frac{1}{2} |\nabla h^\eps|^2
+ \beta_\eps \dot{W}^\eps - \beta_\eps^2 \eps^{-2} \Vert j\Vert_2^2,
\qquad h^\eps(0, \cdot)  \equiv 0.
\end{equation}
where the last term $-\beta_\eps^2 \eps^{-2} \Vert j\Vert_2^2$ is the It\^o correction.
Theorem~\ref{T:SHE} can therefore be reformulated for the 2d KPZ equation, showing that when $\hbeta\in (0,1)$, the solution $h^\eps$ has pointiwse Gaussian limits as $\eps\to 0$.
\end{remark}

Here is the analogue of Theorem~\ref{thm:field}.
\begin{theorem}[Fluctuations of the solution field]\label{T:SHEfield}
Let $u^\eps(t,x)$ be as in Theorem~\ref{T:SHE} with $\hbeta\in (0,1)$.  Let
$\psi: \R^2 \times [0,1] \to \R$ be continuous with compact support, and let
\begin{equation}\label{SHEfield}
J_\eps^\psi := \sqrt{\frac{\log \eps^{-1}}{2\pi}} \int_{\R^2\times [0,1]} (u^\eps(t,x)-1)\, \psi(x,1-t)\, \dd x \dd t.
\end{equation}
Then $J_\eps^\psi$ converges in distribution as $\eps\to 0$ to the same Gaussian random
variable $N(0, \sigma_\psi^2)$ as in Theorem~\ref{thm:field} for the directed polymer model on
$\Z^{2+1}$.
\end{theorem}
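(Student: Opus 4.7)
The plan is to mirror the proof of Theorem~\ref{thm:field} in the continuum setting, replacing the polynomial chaos expansion of the discrete partition function by the Wiener chaos expansion of $u^\eps$. Iterating the mild formulation of~\eqref{2dSHEeps} (equivalently, expanding the stochastic exponential in~\eqref{Feynman}) yields
\begin{equation*}
u^\eps(t,x) - 1 \;=\; \sum_{k\geq 1} \beta_\eps^k \, I_k^\eps(t, x),
\end{equation*}
where $I_k^\eps(t,x)$ is a $k$-fold iterated Wiener integral whose kernel is built from the $2$d heat kernel $p_s$ and the mollifier $j_\eps$, with the ``legs'' between consecutive noise points carrying factors of $j_\eps * p_{s_{i+1}-s_i}$. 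Plugging this expansion into~\eqref{SHEfield} writes $J_\eps^\psi$ as a sum of Wiener integrals, one per chaos order.

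The key input for the second-moment computation is the continuum overlap asymptotics $\int_0^1 \Vert j_\eps * p_s \Vert_2^2\, \dd s \sim (2\pi)^{-1}\log\eps^{-1}$ as $\eps\to 0$, the direct analogue of~\eqref{eq:RN} for the $2$d simple random walk ($C = 2\Vert g\Vert_2^2 = (2\pi)^{-1}$, cf.\ Remark~\ref{rem:ape}). Combined with the scaling $\beta_\eps^2\log\eps^{-1} \to 2\pi\hbeta^2$, each of the $k-1$ ``interior'' time differences in the order-$k$ chaos contributes a factor $\hbeta^2$ from the cancellation of $\beta_\eps^2$ against one logarithmic overlap, while the outermost time and space variables, integrated against $\psi$, produce by a Riemann-sum analysis the kernel $K$ from~\eqref{eq:kappakernel}. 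Together with the prefactor $\log\eps^{-1}/(2\pi)$ from squaring~\eqref{SHEfield}, the order-$k$ variance tends to $\hbeta^{2k}\iint \psi K\psi$, and summing the geometric series ($\hbeta<1$) reproduces $\sigma_\psi^2$.

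To upgrade this $L^2$ convergence to convergence in distribution, I would apply the Fourth Moment Theorem of Nualart--Peccati to each chaos order, just as in the proof of Theorem~\ref{thm:field}: the vanishing of the contraction norms of the kernels follows from the same multi-scale diagonal estimates used for the second moment. Joint Gaussianity across orders then follows by truncating the expansion at level $K$, sending $\eps\to 0$, and finally $K\to\infty$, with the tail controlled uniformly in $\eps$ by the explicit second-moment bound.

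The main obstacle I anticipate is the careful Riemann-sum analysis of the second moment, which must cleanly separate the $(k-1)$ logarithmic ``bulk'' factors from the two ``endpoint'' variables that generate $K$, with constants matched exactly to the $(2\pi)^{-1}$ appearing in both the overlap asymptotics and the scaling of $\beta_\eps$. A secondary subtlety is the $1-t$ shift of the test function in~\eqref{SHEfield}: since $u^\eps(t,x)$ is indexed by an endpoint rather than a starting point, a time-reversal identity of the type used in~\eqref{Feynman1.1} is needed to align with~$K$ as stated in~\eqref{eq:kappakernel}.
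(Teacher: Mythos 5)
Your strategy is sound, but it is a genuinely different route from the paper's. The paper does no Gaussian-approximation work in the continuum at all: it passes to the time-reversed field $\tilde u^\eps$ of \eqref{FKrec} (this is precisely how the $\psi(x,1-t)$ shift you flag is handled), expands in Wiener chaos as in \eqref{Feynman2}--\eqref{hatJpsi} with the averaged first-leg kernel $p^{\,\eps,\psi}$ of \eqref{pepspsidef}, and then performs a chain of $L^2$ kernel replacements (cutting small time increments, replacing $y$-variables by $x$-variables in the heat kernels, and finally coupling $\widetilde W$ to i.i.d.\ Gaussians on unit cells and replacing heat kernels by random walk kernels) to identify $J^\psi_\eps$, up to $L^2$-negligible errors, with the discrete functional $J^\psi_N$ for $N=\eps^{-2}$; Theorem~\ref{thm:field} is then invoked verbatim. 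What the paper's reduction buys is that all fourth-moment analysis is done once, in the discrete setting; what your direct continuum argument would buy is avoiding the discretization/coupling, at the price of redoing that analysis with mollified heat-kernel kernels. Your second-moment bookkeeping (overlap $\sim(2\pi)^{-1}\log\eps^{-1}$, one factor $\hbeta^2$ per leg, kernel $K$ from the endpoint Riemann sum, geometric series for $\hbeta<1$) is correct, as is the truncation-in-$K$ scheme.

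The step you should not present as routine is the claim that the Nualart--Peccati contraction/fourth-moment condition for each fixed chaos order of $J^\psi_\eps$ ``follows from the same multi-scale diagonal estimates used for the second moment.'' It does not follow from them alone: for the pointwise solution $u^\eps(t,x)$ the individual chaos orders are \emph{not} asymptotically Gaussian (their limits are Wick polynomials of Gaussians, which is exactly why Theorem~\ref{thm:subcritical} needs the block decomposition and produces a log-normal limit), so any proof that a chaos order of the \emph{averaged} quantity is asymptotically Gaussian must use the integration against $\psi$ quantitatively inside the fourth-moment estimate, not only in the variance. In the paper this is the content of Proposition~\ref{KeyStep2}: the averaging forces the first time increment to be of macroscopic order (the computation \eqref{ThePsiVar}, where the factor $|I_{i_1}|^2$ is beaten by $\phi(N)^d N^3$ only when $i_1=M$), and for the surviving blocks the dangerous pairings --- fewer than $2|\bi|$ distinct space-time points, or time intervals forming a single connected component --- are killed by the Case~2 and Case~4 analysis, which exploits the dominated-sequence structure. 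A direct continuum verification for the full $k$-th chaos must reproduce this (or some substitute for the block decomposition) with the kernels $p^{\,\eps,\psi}$; it is feasible, but it is the heart of the proof rather than a corollary of the second-moment computation. A minor point: the constant $(2\pi)^{-1}\log\eps^{-1}$ is right, but it is not the periodicity factor of Remark~\ref{rem:ape}; it comes from the aperiodic constant $\|g\|_2^2=1/(4\pi)$ in \eqref{eq:RN} together with the microscopic horizon $N=\eps^{-2}$, i.e.\ $\log N=2\log\eps^{-1}$.
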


\begin{remark} \rm
For simplicity, we have formulated our results for the 2d SHE with $u^\eps(0, \cdot)\equiv 1$. However, it can be easily extended to general $u(0,\cdot)$. As it will become clear in the proof (or the heuristics in Section~\ref{heuristics}), for $\hbeta <1$, the limit of $u^\eps(t,x)$ depends only on the white noise $\dot W$ in an infinitesimal time window $[t-o(1), t]$ as $\eps\to 0$ (for directed polymer of length $N$, the partition function similarly depends only on the disorder in a time window $[1, N^{1-o(1)}]$). Therefore if we set the noise to be zero in the time window $[0, t-o(1)]$, then apply the Feynman-Kac formula \eqref{Feynman} first from time $t$ to $t-o(1)$, and then to $0$, then we will see that the limit of $u^\eps(t,x)$ depends on the initial condition only via a factor $\E_x[u^\eps(0,B_t)]$.
\end{remark}

\begin{remark} \rm
Bertini-Cancrini~\cite{BC98} showed that if in \eqref{2dSHEeps},
$\beta_\eps:= \sqrt{\frac{2\pi}{\log \eps^{-1}} + \frac{\lambda}{(\log \eps^{-1})^2}}$
for some $\lambda \in \R$, which corresponds to a finer window around $\hbeta=1$
in our notation, then $u^\eps$ is tight in a suitable space of distributions, and
the two-point function $\bbE[u^\eps(t,x) u^\eps(t,y)]$
converges to a non-trivial limit. However, they could not identify the limit of $u^\eps$.
Combined with our result that $u^\eps(t,x)$ converges in probability to $0$ for each $x\in \R^2$ when
$\hbeta=1$, this suggests that the random measure $u^\eps(t, x)\dd x$ may have a non-trivial
limit as $\eps\to 0$, which is singular w.r.t.\ the Lebesgue measure. \end{remark}

\begin{remark} \label{rem:she}{\rm
We note a formal connection between the 2d SHE and Gaussian multiplicative chaos (GMC), which typically considers random measures $M_\beta(\dd x):=e^{\beta X_x- \beta^2\bbE[X_x^2]/2} \dd x$ on $[0,1]^d$ for some Gaussian field $(X_x)_{x\in [0,1]^d}$. When the covariance kernel of $X$ is divergent on the diagonal, $X$ is a generalized function and to define $M_\beta(\dd x)$, one first replaces $X$ by its mollified version $X^\eps$ and defines $M^\eps_\beta(\dd x)$ and then takes the limit $\eps\to 0$ $($see \cite{RV13} for a survey$)$.
For the 2d SHE, the exponential weight in \eqref{Feynman} can be seen as the analogue of $e^{\beta X^\eps_x- \beta^2\bbE[(X^\eps_x)^2]/2}$ for the mollified Gaussian field $X^\eps$, except now the Gaussian field $X^\eps$ is indexed by $C([0,t], \R^2)$ endowed with the Wiener measure.
As $\eps\to 0$, its covariance kernel $K^\eps(\cdot,\cdot)$ can be seen to diverge logarithmically
in probability, if it is regarded as a random variable defined on $C([0,t], \R^2)^2$ endowed with
the product Wiener measure. We note that shortly after the completion of this paper,
Mukherjee et al.~\cite{MSZ16} used techniques from GMC to prove the existence of a
weak to strong disorder transition for the SHE in $d\geq 3$.
}
\end{remark}

\section{Heuristics}\label{heuristics}

In this section we illustrate the core of our approach,
emphasizing the main ideas and keeping the exposition at a heuristic level.
In \S\ref{sec:relevant} we
recall the approach developed in \cite{CSZ13} to deal with the disorder relevant regime,
then in \S\ref{ss:heur} we explain how it
fails for marginally relevant models and how does the marginal overlap condition arise.

\subsection{Heuristics for disorder relevant regime}
\label{sec:relevant}

For simplicity, we use the pinning model to illustrate the general approach developed in \cite{CSZ13}
to identify limits of partition functions in a suitable continuum and weak disorder limit.

We first rewrite the partition function \eqref{eq:Zpin} for $t=0$:
since $e^{x \ind_{\{n\in\tau\}}} = 1 + (e^x-1) \ind_{\{n\in\tau\}}$ for all $x\in\R$,
we get
\begin{equation}\label{eq:Zstart}
	Z^\omega_{N,\gb} =
	\E \Bigg[ \prod_{n=1}^N \big(1 + \gb\, \eta_n \, \ind_{\{n\in\tau\}}
	\big) \Bigg] \qquad \text{where} \quad
	\eta_n := \frac{e^{\gb\omega_n -\gl(\beta)} - 1}{\beta} \,.
\end{equation}
A binomial expansion of the product in \eqref{eq:Zstart} then yields
(setting $n_0 = 0$)
\begin{equation} \label{eq:Zstart2}
	Z^\omega_{N, \gb} =
	1 + \sum_{k=1}^{N} \;\gb^k
	\sum_{1\leq n_1<\cdots<n_k\leq N}
	\, \prod_{j=1}^k q_{n_j-n_{j-1}}
	\, \prod_{i=1}^k \eta_{n_i}
	\qquad \text{where} \quad q_{n} := \P(n\in \tau) \,.
\end{equation}
We have thus rewritten $Z^\omega_{N, \gb}$ as a multi-linear polynomial of the i.i.d.\
random variables $(\eta_{n})_{n\in\N}$, sometimes called a \emph{polynomial chaos} expansion.

Assume for simplicity that the underlying renewal process $\tau$ satisfies
\begin{equation}\label{ren00}
\P(\tau_1-\tau_0=n) \sim \frac{C}{n^{1+\alpha}} \qquad \mbox{as } n\to\infty
\end{equation}
for some $C>0$ and $\alpha \in (0,1)$, which implies the local limit theorem~\cite[Thm.~B]{D97}.
\begin{equation}\label{doney}
q_n:= \P(n\in \tau) \sim \frac{\tilde C}{n^{1-\alpha}} = \frac{\alpha \sin (\pi\alpha)}{C\pi} \cdot \frac{1}{n^{1-\alpha}}.
\end{equation}

Recalling \eqref{eq:Zstart}, we have $\bbE[\eta_{n}] =0$, and by Taylor expansion,
\begin{equation} \label{eq:Taylor}
 \bbvar[\eta_{n}] \sim 1  \qquad \mbox{as } \beta\to 0.
\end{equation}
Since the ``influence'' of each $\eta_n$ on $Z^\omega_{N, \gb}$ is small, we can apply a
\emph{Lindeberg principle} (see e.g.~\cite{CSZ13, MOO10, NPR10}) to replace $(\eta_n)_{n\in\N}$
by i.i.d.\ standard Gaussian random variables without changing the limiting distribution of $Z^\omega_{N, \gb}$
as $N\to\infty$.

Standard i.i.d.\ Gaussian $(\eta_n)_{n\in\N}$ can be defined from a white noise $W(\dd t)$ on $[0,\infty)$, with
\begin{equation} \label{eq:QN}
	\eta_{n} \, :=\,
	\sqrt{N} \int_{\frac{n}{N}}^{\frac{n+1}{N}} W(\dd s) , \qquad n\in\N.
\end{equation}
Setting $t_i :=n_i/N$ for each $i\in\N$, the series \eqref{eq:Zstart2} then becomes a series of stochastic integrals
\begin{eqnarray}
	Z^\omega_{N, \gb} &\approx&
	1 + \sum_{k=1}^{N} \, (\gb N^{\frac{1}{2}})^k
	\idotsint \limits_{0 < t_1 < \cdots < t_k < 1}
	\ \prod_{j=1}^k
	q_{\lfloor Nt_j\rfloor -\lfloor Nt_{j-1}\rfloor}
	\, \prod_{i=1}^k W(\dd t_i) \nonumber \\
	&\approx&
	1 + \sum_{k=1}^{N} \, (\tilde C\gb N^{\alpha-\frac{1}{2}})^k
	\idotsint \limits_{0 < t_1 < \cdots < t_k < 1}
	\ \prod_{j=1}^k
	(t_j-t_{j-1})^{\ga-1}
	\, \prod_{i=1}^k W(\dd t_i) \,,    \label{eq:Zstart2bis}
\end{eqnarray}
where we have applied \eqref{doney} that $q_{Nt} \sim \tilde C(Nt)^{\alpha-1}$.

In the disorder relevant regime $\alpha\in (1/2, 1)$, we note that $g(t):=t^{\ga-1}$ is square-integrable in $t\in [0,1]$ and the stochastic integrals in \eqref{eq:Zstart2bis}
are all well-defined. In particular, in the weak disorder limit
\begin{equation}\label{eq:betaNold}
	\beta_N := \frac{\hat\beta}{\tilde C N^{\alpha-\frac{1}{2}}}\,, \qquad
	\text{with} \quad \hat\beta \in (0,\infty) \,,
\end{equation}
relation \eqref{eq:Zstart2bis} suggests that
as $N\to\infty$, the partition function $Z^\omega_{N, \gb_N}$ converges in law to
\begin{equation} \label{eq:Zlimit}
	\bs{Z}_{\hat\beta}^W :=
	1 + \sum_{k=1}^{\infty} \, \hat\beta^k
	\idotsint \limits_{0 < t_1 < \cdots < t_k < 1}
	\ \prod_{j=1}^k
           (t_j-t_{j-1})^{\ga-1}
	\, \prod_{i=1}^k W(\dd t_i) \,.
\end{equation}
The limit $\bs{Z}_{\hat\beta}^W$ can then be used to define a continuum disordered pinning model \cite{CSZ16}.

For the marginal case $\alpha=1/2$, the above approach
breaks down because $1/\sqrt{t}$ just fails to be square-integrable in $[0,1]$ and the
stochastic integrals in \eqref{eq:Zstart2bis} become undefined. Nevertheless, for each $k\in\N$, we
note that the second moment of the $k$-th term in \eqref{eq:Zstart2} diverges as $N\to\infty$,
which hints at marginal relevance of disorder.

For directed polymer models, exactly the same phenomenon appears. The approach of~\cite{CSZ13} sketched above applies to the short-range directed polymer on $\Z^{1+1}$ and the long-range directed polymer on $\Z^{1+1}$ with tail exponent $\alpha \in (1,2)$, and breaks down exactly at the marginal cases, which include the short-range directed polymer on $\Z^{2+1}$ and the long-range directed polymer on $\Z^{1+1}$ with tail exponent $\alpha=1$.

\subsection{Heuristics for marginal relevant regime}\label{ss:heur}
We now sketch the heuristics behind our proof of Theorem~\ref{thm:subcritical}. Again, we use the pinning model to illustrate our approach, focusing on the marginal case where the renewal process satisfies \eqref{ren00} with $\alpha=1/2$.

For simplicity, while retaining the key features, we assume that $(\eta_n)_{n\in\N}$ are i.i.d.\ standard normal, and in light of \eqref{doney}, we assume for simplicity that $q_n=1/\sqrt{n}$. Then $Z^\omega_{N, \gb}$ in \eqref{eq:Zstart2} simplifies to
\begin{equation}\label{Zsimplified}
	Z_N = 1+\sum_{k=1}^N \;\gb_N^k
	\sum_{1\leq n_1<\cdots<n_k\leq N}
	\, \prod_{j=1}^k \frac{\eta_{n_j}}{\sqrt{n_j-n_{j-1}}}  \,.
\end{equation}

The first observation, which follows from a direct calculation, is that for each $k\in\N$, the associated inner sum in $Z_N$ has second moment
$$
\E\left[\Big(\sum_{1\leq n_1<\cdots<n_k\leq N}
	\, \prod_{j=1}^k \frac{\eta_{n_j}}{\sqrt{n_j-n_{j-1}}} \Big)^2\right]
	=
	\sum_{1\leq n_1<\cdots<n_k\leq N}
	\, \prod_{j=1}^k \frac{1}{n_j-n_{j-1}}   \sim (\log N)^k \sim \Ro_N^k,
$$
where $\Ro_N$ is the expected replica overlap defined in \eqref{eq:replica} and satisfies the {\em marginal overlap condition}. This suggests that if there is a non-trivial weak disorder limit for $Z_N$, then we should choose $\beta_N:=\hbeta/\sqrt{\Ro_N}$ for some $\hbeta>0$. Furthermore, note that $\E[Z_N^2]\to (1-\hbeta^2)^{-1}$ for $\hbeta\in (0,1)$ and $\E[Z_N^2]\to \infty$ for $\hbeta\geq 1$, with a transition occurring at $\hbeta_c=1$.

We assume from now on $\beta_N:=\hbeta/\sqrt{\log N} \sim \hbeta/\sqrt{\Ro_N}$ in \eqref{Zsimplified} with $\hbeta\in (0,1)$, so that
\begin{equation}\label{Zsimplified2}
	Z_N = 1+\sum_{k=1}^N \hbeta^k Z_N^{(k)} \quad \mbox{with} \quad Z_N^{(k)}:= \frac{1}{(\log N)^{\frac{k}{2}}} \sum_{1\leq n_1<\cdots<n_k\leq N}
	\, \prod_{j=1}^k \frac{\eta_{n_j}}{\sqrt{n_j-n_{j-1}}}  \,.
\end{equation}
To prove Theorem~\ref{thm:subcritical}, that $Z_N$ converges in law to a log-normal random variable, we will identify the limit of
$Z_N^{(k)}$ for each $k\in\N$, where an interesting structure appears. Below are the key observations.
\medskip

{\bf (A)} {\em An elementary observation.}  Let $(W(t))_{t\geq 0}$ be a standard Brownian motion on $\R$. For any $\delta>0$, let $W_\delta(t):= W(\delta t)/\sqrt{\delta}$, which is another standard Brownian motion correlated with $W$. A simple covariance calculation then shows that as $\delta\downarrow 0$, $W$ and $W_\delta$ become asymptotically independent. Such asymptotic independence due to separation of scales also extends to higher-dimensional white noise, which will be crucial in our analysis.
\smallskip

{\bf (B)} {\em Identifying the time scale.} Next, we identify the intrinsic time scale appearing in the limit of $Z_N^{(1)}$, and $Z_N^{(k)}$ in general. Note that for any $0\leq a<b\leq 1$, $\frac{1}{\sqrt{\log N}} \sum_{n=N^a}^{N^b} \frac{\eta_n}{\sqrt n}$ converges in distribution to a Gaussian random variable with mean zero and variance $b-a$. Therefore to approximate the sum in $Z_N^{(1)}$ by a stochastic integral, we should make the change of variable $n=N^a$, which gives
\begin{equation}\label{ZN1approx}
\frac{1}{\sqrt{\log N}} \sum_{n=N^a}^{N^b} \frac{\eta_n}{\sqrt n} \approx \int_a^b
W^{(1)} (\dd s) \quad \forall\, 0\leq a<b\leq 1,
\end{equation}
where $W^{(1)}$ is  a standard Brownian motion. In particular,
$Z_N^{(1)}\approx \int_0^1 W^{(1)} (\dd s) = W^{(1)}_1$. This indicates that the
correct time scale is exponential $t\to N^t$, rather than linear $t\to N t$.
\smallskip

{\bf (C)} {\em Identifying the structure.} Finally, we identify the limit of $Z_N^{(2)}$, where the key structure already emerges. An $L^2$ calculation shows that as $N\to\infty$, we can relax the range of summation:
\begin{equation}
Z_N^{(2)}=\frac{1}{\log N}\!\!
	\sum_{0 < n < m \le N}\! \frac{\eta_{n} \, \eta_{m}}{\sqrt{n} \sqrt{m-n}} \approx \frac{1}{\sqrt{\log N}}
	\!\!\sum_{0 < n_1\le N} \!\frac{\eta_{n_1}}{\sqrt{n_1}} \Big(\frac{1}{\sqrt{\log N}} \sum_{0 < n_2\le N} \!\!\!\frac{\eta_{n_1+n_2}}{\sqrt{n_2}}\Big).
\end{equation}
Using the approximation \eqref{ZN1approx} with $n_1=:N^{s_1}$ and similarly for the sum over $n_2=:N^{s_2}$,
\begin{equation}\label{eq:Z12}
Z_N^{(2)} \approx \int_0^1 {\rm d}W^{(1)}_{s_1} \Big(\frac{1}{\sqrt{\log N}}
\sum_{0 \le n_2\le N} \!\!\!\frac{\eta_{N^{s_1}+n_2}}{\sqrt{n_2}}\Big) \approx \int_0^1
W^{(1)} (\dd s_1) \int_0^1 W^{(2; s_1)}(\dd s_2),
\end{equation}
where given $s_1$, $W^{(2; s_1)}$ is a standard Brownian motion with
\begin{equation}\label{ZN1approxx}
\frac{1}{\sqrt{\log N}} \sum_{n=N^{s_1}}^{N^{s_1}+N^{s_2}} \frac{\eta_n}{\sqrt n}  \approx
W^{(2; s_1)}(s_2).
\end{equation}
To understand the relation between $W^{(1)}_{s_1}$ and $W^{(2;s_1)}_{s_2}$ and
make sense of the stochastic integral in \eqref{eq:Z12},
we distinguish between the cases $s_2<s_1$
and $s_2>s_1$.

\begin{itemize}
\item \textbf{Case $s_2 < s_1$:} In this case, $N^{s_2} \ll N^{s_1}$, and observation {\bf(A)} shows that
in the limit $N\to\infty$, the white noise $(W^{(2; s_1)}(\dd s_2))_{0 \le s_2 \le s_1}$
becomes asymptotically independent of  $(W^{(1)}(\dd s_1))_{0 \le s_1 \le 1}$. Indeed, by \eqref{ZN1approx} and
\eqref{ZN1approxx}, we note that the increments of $W^{(2;s_1)}$ in a small time window $[s_2, s_2+\Delta]$
is defined from $\eta_n$ with $n\in [N^{s_1} +N^{s_2}, N^{s_1} +N^{s_2+\Delta}]$, which is an infinitesimal window contained in the range of indices $[N^{s_1}, N^{s_1+\Delta}]$ used to define the increments of $W^{(1)}$ on $[s_1, s_1+\Delta]$. In other words, the white noise
$(W^{(2; s_1)}(\dd s_2))_{0 \le s_2 \le s_1}$ is effectively obtained by sampling
$W^{(1)}(\dd s_1)$ in an infinitesimal window in $[s_1, s_1+\Delta]$.
A covariance calculation as
in {\bf (A)}  shows that in the limit $N\to\infty$, $W^{(1)}(\dd s_1)$
and $W^{(2; s_1)}(\dd s_2)$
are independent for all $a, s_1\in [0,1]$ and $b\in [0, s_1]$. Furthermore,
using the Fourth Moment Theorem, it can be shown that
\begin{equation}
	\quad \big( \Gamma(\dd s_1, \dd s_2)
	\,:=\, W^{(1)}(\dd s_1) \cdot W^{(2; s_1)}(\dd s_2) \big)_{0 \le s_2 < s_1 \le 1}
\end{equation}
\emph{is a two-dimensional white noise,
independent of $(\dd W^{(1)}_{s_1})_{0 \le s_1 \le 1}$}.

\item \textbf{Case $s_2 > s_1$:} In this case, $N^{s_1} \ll N^{s_2}$ as $N\to\infty$. Therefore,
 the range of indices $[N^{s_1} +N^{s_2}, N^{s_1} +N^{s_2+\Delta}]$ essentially coincide with $[N^{s_2}, N^{s_2+\Delta}]$, which are the indices of $\eta$ used to define respectively the increments of $W^{(2;s_1)}$ and $W^{(1)}$
in a small window $[s_2, s_2+\Delta]$. This implies that in the limit $N\to\infty$, we have
$W^{(2; s_1)}(\dd s_2) = W^{(1)}(\dd s_2)$.
\end{itemize}
\medskip

By the above considerations, we can now rewrite the approximation \eqref{eq:Z12} as
\begin{equation} \label{eq:Z2appr}
	Z^{(2)}_N \,\approx\, \int_{0 \le s_2 < s_1 \le 1}
	\Gamma(\dd s_1, \dd s_2) \,+\,
	\int_{0 \le s_1 < s_2 \le 1} W^{(1)}(\dd s_1) \, \dd W^{(1)}(\dd s_2) \,,
\end{equation}
where the first term is a normal random variable with mean zero and variance $1/2$, independent of the second term, which can be rewritten as
\begin{equation*}
	\int_0^1  W^{(1)}(\dd s_2) \, \bigg( \int_0^{s_2}  W^{(1)}(\dd s_1) \bigg)
	\,=\,\int_0^1 W^{(1)}(s_2) \,  W^{(1)}(\dd s_2) \,=\,
	\frac{(W^{(1)}(1))^2 - 1}{2} \,.
\end{equation*}
	
When we consider the limit of $Z_N^{(k)}$ for $k\geq 3$, similar separation of scales appears when
we make the change of time scale $n_i=N^{s_i}$.  The limit of $Z_N^{(k)}$ admits a
decomposition similar to \eqref{eq:Z2appr} (but more complicated), involving independent
white noises of various different dimensions up to dimension $k$.

\smallskip

So far we focused on pinning models, but everything can be extended to directed polymer models,
whose partition function admits a polynomial chaos expansion
analogous to \eqref{eq:Zstart2}: see \eqref{eq:Zpoly0} below.
Remarkably, the structure is the same as for the pinning model: if we make
the change of time variable $n_i=N^{a_i}$ and
a change of space variable $z_i=x_i n_i^{1/d}$ (assuming $L(\cdot)\equiv 1$ in
Hypothesis~\ref{hypoth}),
then similar to \eqref{ZN1approx}, $Z_N^{(1)}$ can be approximated by
$\int_0^1 \int_{\R^d} W^{(1)}(\dd t \, \dd x)$ for a white noise $W^{(1)}$ on
$\R^d\times [0,\infty)$. Concerning $Z_N^{(2)}$, in analogy with \eqref{eq:Z12},
for each $s_1>0$ and $x_1\in \R^d$, we have an independent
white noise $(W^{(2; s_1, x_1)}(\dd s_2 \, \dd x_2))_{s_2\in [0, s_1], x_2\in \R^d}$,
which is  effectively obtained by sampling $W^{(1)}$ in an infinitesimal space-time window
around $(s_1, x_1)$, while $(W^{(2; s_1, x_1)}(\dd s_2 \, \dd x_2))_{s_2>s_1, x_2\in\R^d}
= (W^{(1)}(\dd s_2 \, \dd x_2))_{s_2>s_1, x_2\in\R^d}$.

\section{Proof steps for Theorem~\ref{thm:subcritical}}\label{ss:proofsteps}

Since the proof of Theorem~\ref{thm:subcritical} (for $\hbeta\in (0,1)$) is long and modular,
we list here the proof steps. These contain four approximations {\bf (A1)--(A4)}, plus one key step {\bf (K)}
which identifies the building blocks of the limiting partition function. The local limit theorems
\eqref{eq:llt2}--\eqref{eq:llt0} in Hypothesis \ref{hypoth} will only be used in the approximation step {\bf (A3)}. The other steps only use the marginal overlap condition, i.e., $\Ro_N$ is a divergent slowly varying function.

The proof steps are the same for pinning $(d=0)$ and directed polymer models $(d=1, 2)$,
so we follow a unified notation.
The starting point is a polynomial chaos
expansion for the partition function of directed polymers,
in analogy with \eqref{eq:Zstart}-\eqref{eq:Zstart2} for pinning:
\begin{equation} \label{eq:Zpoly0}
\begin{aligned}
Z^\omega_{N, \gb_N}&= 1+\sum_{k=1}^N \hbeta^k Z_N^{(k)},  \qquad (\beta_N=\hbeta/\sqrt{\Ro_N}), \\
\mbox{where}\qquad  Z_N^{(k)} &= \frac{1}{\Ro_N^{k/2}}
		\sumtwo{1\leq n_1<\cdots<n_k\leq N}{z_1, z_2, \ldots, z_k \in \bbZ^d}
	\, \prod_{j=1}^k q_{n_j-n_{j-1}}(z_j-z_{j-1})
	\, \prod_{i=1}^k \eta_{(n_i,z_i)} \,,
\end{aligned}
\end{equation}
with $n_0:=0$, $z_0:=0$ and
\begin{equation} \label{eq:uffano}
	q_n(z) :=\P(S_n=z) \,, \qquad
	\eta_{(n,z)} = \eta^{(N)}_{(n,z)} := \frac{e^{\beta_N \omega_{(n,z)}
	-\lambda(\beta_N)} - 1}{\beta_N} \,.
\end{equation}
Note that relation \eqref{eq:Zpoly0}
applies also to the pinning model, if we view it as a directed polymer on $\Z^0 := \{0\}$
(cf.~Hypothesis~\ref{hypoth}) and identify $q_n(0)$ with $q_n = \P(n\in\tau)$.

\smallskip

As a preliminary step, we can approximate $Z^{\omega}_{N, \gb_N}$ from \eqref{eq:Zpoly0}
in $L^2$ (uniformly in $N$) by $Z^{\omega, K}_{N, \gb_N} := 1+\sum_{k=1}^{K} \hbeta^k Z_N^{(k)}$
if $K$ is large, but fixed, since for $\hbeta\in (0,1)$,
\begin{equation}
\lim_{K\to\infty} \Vert Z^{\omega}_{N, \gb_N} - Z^{\omega, K}_{N, \gb_N}\Vert_2^2 = \lim_{K\to\infty} \sum_{k=K+1}^N \hbeta^{2k} \Vert Z_N^{(k)}\Vert_2^2 \leq \lim_{K\to\infty} \sum_{k=K+1}^\infty \hbeta^{2k} =0 \,,
\end{equation}
where we used the fact that $\Vert Z_N^{(k)}\Vert_2^2 \le 1$,
as one checks by \eqref{eq:Zpoly0} and \eqref{eq:replica}
(see \eqref{eq:norma1} below).
\emph{We can therefore focus on identifying the limit $Z^{\omega, K}_{N, \gb_N}$ as $N\to\infty$,
and send $K\to\infty$ later.}
\medskip

Our first step is to approximate $Z_N^{(k)}$ in \eqref{eq:Zpoly0} as follows.
\begin{itemize}
\item[{\bf (A1)}] For each $k\in\N$, define $\widehat Z_N^{(k)}$ by enlarging the range of summation for $Z_N^{(k)}$
in \eqref{eq:Zpoly0}, allowing the time increments
$n_1$, $n_2-n_1$, \ldots, $n_k-n_{k-1}$ to vary freely in $\{1,\ldots,N\}$,
and show that $\Vert Z_N^{(k)} - \widehat Z_N^{(k)}\Vert_2^2\to 0$ as $N\to\infty$.
\end{itemize}
Note that this allows us to replace $Z^{\omega, K}_{N, \gb_N}$ by
\begin{equation}\label{ZA1K}
Z_{N, \beta_N}^{{\bf (A1)}}:=1+\sum_{k=1}^{K} \hbeta^k \widehat Z_N^{(k)}, \quad \mbox{with} \quad \Big\Vert  Z^{\omega, K}_{N, \gb_N} - Z_{N, \beta_N}^{{\bf (A1)}}\Big\Vert_2^2 \asto{N} 0.
\end{equation}

Let us now consider $M$ arbitrary and
for each $\widehat Z_N^{(k)}$ partition the range $\{1, \ldots, N\}$ for
each variable $n_1$, $n_2-n_1$, \ldots, $n_k - n_{k-1}$ into $M$ blocks
$I_1, I_2, \cdots, I_M$, defined by (with $\sft_0 := 0$)
\begin{align}\label{aimP}
	I_i := \big(\sft_{i-1}, \sft_i \big] \qquad
	\text{with} \qquad
	\sft_i &  := \sft^{N,M}_i
	:= \min\Big\{ m \in \{1,\ldots,N\}\colon
	\  \Ro_m \geq \tfrac{i}{M}\,\Ro_{N} \Big\} \,.
\end{align}
Note that for $\Ro_N = \log N$ we have $I_i = (N^{\frac{i-1}{M}}, N^{\frac{i}{M}}]$.
We can then write

\begin{equation}\label{genk}
\begin{split}
	\widehat Z_N^{(k)} &\phantom{:}= \frac{1}{M^{\frac{k}{2}}} \sum_{1\leq i_1,...,i_k\leq M}  \,
	\Theta^{N;M}_{i_1,...,i_k} \,, \qquad  \text{where} \\
	\Theta^{N;M}_{i_1,...,i_k} & :=
	\left(\frac{M}{\Ro_N}\right)^{k/2} \!\!\!\!\!\!
	\sumtwo{n_1-n_0 \in I_{i_1}, \, n_2 - n_1 \in I_{i_2}, \ldots, \, n_k - n_{k-1} \in I_{i_k}}
	{z_1, z_2, \ldots, z_k \in \Z^d}
	\, \prod_{j=1}^k q_{n_j-n_{j-1}}(z_j-z_{j-1})
	\, \prod_{i=1}^k \eta_{(n_i,z_i)} \,,
\end{split}	
\end{equation}
where $(n_0, z_0) = (0,0)$.

\begin{remark}\rm
The intervals $(I_i)_{1\leq i\leq M}$ encode the right time scale, as explained in {\bf (B)} in Section \ref{ss:heur},
because  $\Ro_{\sft_i} - \Ro_{\sft_{i-1}} \sim \frac{1}{M}\Ro_N$. The sum over $i_1,\ldots,i_k$ in $\widehat Z_N^{(k)}$ in \eqref{genk} corresponds to a discretization of the stochastic integrals that will arise in the limit $N\to\infty$.
\end{remark}

To ensure a proper separation of scales later on, define
\begin{equation}\label{eq:DM0}
\{1...,M\}^k_{\sharp}:= \{ \bi = (i_1,...,i_k)\in \{1,...,M\}^k:  |i_j-i_{j'}|\geq 2 \mbox{ for all } j\neq j'\}.
\end{equation}
Our second approximation shows that the contributions to $\widehat Z_N^{(k)}$ in \eqref{genk} from summation indices
$\bi \in \{1, \ldots, M\}^k\backslash  \{1, \ldots, M\}^k_\sharp$ is small for large $M$, uniformly in large $N$, i.e.,
\begin{itemize}
\item[{\bf (A2)}]
\hfill $\displaystyle \lim_{M\to\infty} \limsup_{N\to\infty}
\bigg\Vert \sum_{\bi \in \{1, \ldots, M\}^k \setminus \{1, \ldots, M\}^k_\sharp}
\frac{1}{M^{\frac{k}{2}}} \,\Theta^{N;M}_{\bi}\bigg\Vert_2^2 = 0.$ \hfill\,
\end{itemize}
Therefore we can restrict the sum over $\bi$ in $\widehat Z_N^{(k)}$ to $\bi \in \{1, \ldots, M\}^k_\sharp$. Note that this implies
we can further replace $Z_{N, \beta_N}^{{\bf (A1)}}$ in \eqref{ZA1K} by
\begin{equation}\label{ZA2K}
Z_{N, \beta_N}^{{\bf (A2)}} := 1+\sum_{k=1}^{K} \frac{\hbeta^k }{M^{\frac{k}{2}}} \sum_{\bi\in \{1,\ldots, M\}^k_\sharp}
	\!\!\!\!\!\Theta^{N;M}_{\bi}, \quad \mbox{with} \quad
\lim_{M\to\infty}\limsup_{N\to\infty} \Vert Z_{N, \beta_N}^{{\bf (A1)}}- Z_{N, \beta_N}^{{\bf (A2)}} \Vert_2^2 =0.
\end{equation}

We now try to identify the limit of $\Theta^{N;M}_{\bi}$ as $N\to\infty$. The heuristics sketched in Section \ref{ss:heur}  for $Z_N^{(1)}$ and $Z_N^{(2)}$ suggest the following:
\begin{itemize}
\item Case $k=1$: the family $(\Theta^{N;M}_{i})_{1\leq i\leq M}$
converges in distribution to i.i.d.\ standard normal random variables $(\zeta_i)_{1\leq i\leq M}$.

\item Case $k=2$: for $i_1\leq i_2-2$, the family $\Theta^{N;M}_{i_1,i_2}$ converges in distribution to $\zeta_{i_1}\zeta_{i_2}$,
while for $i_1\geq i_2+2$, the family $\Theta^{N;M}_{i_1,i_2}$ converges in distribution to a family of i.i.d.\ standard
normal random variables $\zeta_{i_1, i_2}$ independent of $(\zeta_i)_{1\leq i\leq M}$.
\end{itemize}
For $k\geq 3$, the limit of $\Theta^{N;M}_{i_1,...,i_k}$ also turns out to be a product of
independent standard normal random variables $\zeta_\cdot$,
with \emph{one $\zeta_\cdot$ for each running maxima in the sequence $(i_1,...,i_k)$}.
More precisely, let us say that
\begin{equation}\label{1domseq}
\bi:=(i_1, \ldots, i_k)\in \{1...,M\}^k \mbox{  is a {\em dominated sequence}  if  } i_1>i_2,\ldots, i_k.
\end{equation}
Then each $\bi\in \{1, \ldots, M\}^k_\sharp$ can be divided
into consecutive dominated sequences $\bi^{(1)}:=(i_1, \ldots, i_{\ell_2-1})$,
$\bi^{(2)}:=(i_{\ell_2}, \ldots, i_{\ell_3-1})$, \ldots, $\bi^{(\mathfrak{m})}:=(i_{\ell_{\mathfrak{m}}}, \ldots, i_k)$,
where $i_{ \ell_1}=i_1< \cdots<i_{\ell_{\mathfrak{m}}}$ are the successive running maxima
of $(i_1, \ldots, i_k)$.

Our third approximation step shows that the random variable
$\Theta^{N;M}_{\bi}$ in \eqref{genk} admits the following asymptotic factorization:

\begin{itemize}
\item [{\bf (A3)}] For all $M,k\in\N$ and for each $\bi:=(i_1, \ldots, i_k)\in \{1...,M\}^k_{\sharp}$,
\begin{equation}
\lim_{N\to\infty} \Big\Vert\Theta^{N;M}_{\bi} - \Theta^{N;M}_{\bi^{(1)}} \Theta^{N;M}_{\bi^{(2)}}\cdots \Theta^{N;M}_{\bi^{(\mathfrak{m})}}\Big\Vert_2^2 = 0,
\end{equation}
where $(\bi^{(1)}, \ldots, \bi^{(\mathfrak{m})})$ is the decomposition of $\bi$ into dominated sequences.
\end{itemize}
Note that this allows us to further replace $Z_{N, \beta_N}^{{\bf (A2)}}$ in \eqref{ZA2K} by
\begin{equation}\label{ZA3K}
Z_{N, \beta_N}^{{\bf (A3)}} := \! 1+ \!\sum_{k=1}^{K}  \frac{\hbeta^k}{M^{\frac{k}{2}}} \sum_{\bi\in \{1,\ldots, M\}^k_\sharp}  \!\!\!\!\!\!
	\Theta^{N;M}_{\bi^{(1)}} \Theta^{N;M}_{\bi^{(2)}}\cdots \Theta^{N;M}_{\bi^{(\mathfrak{m})}} \quad\mbox{with} \quad \Vert Z_{N, \beta_N}^{{\bf (A2)}}- Z_{N, \beta_N}^{{\bf (A3)}} \Vert_2^2  \asto{N} 0.
\end{equation}
\smallskip

We are now reduced to identifying the limit of $\Theta^{N;M}_{\bi}$ when $\bi$ are dominated
sequences. Denote
\begin{equation}\label{eq:DM}
D_M := \Big\{
\bi \in \textstyle\bigcup_{k=1}^\infty\{1...,M\}^k_{\sharp}:
\ \bi \text{ is a dominated sequence}
\Big\} \,.
\end{equation}
Here is the key step in the proof of Theorem~\ref{thm:subcritical}:
\smallskip

\begin{itemize}
\item[{\bf (K)}] As $N\to\infty$, the family of random variables $(\Theta^{N;M}_{\bi})_{\bi\in D_M}$ converges in distribution to a family of i.i.d.\ standard normal random variables $(\zeta_{\bi})_{\bi\in D_M}$.
\end{itemize}
In particular, this implies that
\begin{equation}\label{eq:cutoff}
Z_{N, \beta_N}^{{\bf (A3)}} \xrightarrow[N\to\infty]{d}
	\bs{Z}_\hbeta^{M,K}
	:= 1 + \!\!\sum_{k=1}^{K} \frac{\hbeta^k }{M^{\frac{k}{2}}} \!\!\sum_{\bi \in \{1, \ldots, M\}^k_\sharp}
	 \prod_{l=1}^{\mathfrak{m}(\bi)} \zeta_{\bi^{(l)}} \,.
\end{equation}

To complete the proof of Theorem~\ref{thm:subcritical} for $\hbeta\in (0,1)$, we first take the limit $K\to\infty$. By the fact that $\hat\gb<1$, it is clear
that $\bs{Z}_\hbeta^{M,K}$ converges
as $K\to\infty$ to
\begin{equation}\label{eq:cutofff}
\bs{Z}_\hbeta^{(M)}
	:= 1 + \!\!\sum_{k=1}^{\infty} \frac{\hbeta^k }{M^{\frac{k}{2}}} \!\!\sum_{\bi \in \{1, \ldots, M\}^k_\sharp}
	 \prod_{l=1}^{\mathfrak{m}(\bi)} \zeta_{\bi^{(l)}} ,
\end{equation}
\emph{uniformly in $L^2$ with respect to $M$}.
Therefore it only remains to take the limit $M\to\infty$ and show that
\begin{itemize}
\item[{\bf (A4)}]
\hfill$\displaystyle
\bs{Z}_\hbeta^{(M)}
\ \xrightarrow[M\to\infty]{d} \ \bs{Z}_{\hat\beta} =
e^{\int_0^1\frac{\hat\gb }{\sqrt{1-\hat\gb^2 \,t}} \dd W_t-
\frac{1}{2} \int_0^1\frac{\hat\gb^2\ }{1-\hat\gb^2\,t} \dd t}.
$\hfill\,
\end{itemize}
\medskip

We will prove the key step {\bf (K)} in Section \ref{main_estimate}. The approximation
steps {\bf (A1)--(A4)} will be carried out in Section \ref{sec:CG}, which then implies
Theorem \ref{thm:subcritical}. The main tool to prove {\bf (K)} is a
{\em fourth moment theorem} for polynomial chaos expansions,
due to de Jong~\cite{dJ87,dJ90}, Nualart and Peccati~\cite{NP05}
and Nourdin, Peccati and Reinert~\cite{NPR10}. The following versions
is an extension to random variables with possibly unbounded third moment,
based on the Lindeberg principle proved in \cite{CSZ13}
(which extends~\cite{R79, MOO10}).

\begin{theorem}[Fourth moment theorem]\label{T:4mom}
For each $N\in\N$, let $(\eta_{N,t})_{t\in \T}$ be independent
random variables with mean $0$ and variance $1$, indexed by a countable set
$\T$. Assume that $(\eta_{N,t}^2)_{N\in\N,\, t\in\T}$
are uniformly integrable. Fix $k\in\N$ and $d_1, \ldots, d_k\in \N$. For each
$1\leq i\leq k$, let $\Phi_{N}^{(i)}(\eta_{N,\cdot})$ be a multi-linear polynomial in
$(\eta_{N,t})_{t\in \T}$ of degree $d_i$, i.e.,
$$
\Phi_{N}^{(i)}(\eta_{N,\cdot})
= \sum_{I\subset \T, \, |I|= d_i} \phi_{N}^{(i)}(I)
\prod_{t\in I} \eta_{N,t} \qquad \mbox{for some real-valued } \phi_{N}^{(i)}(\cdot).
$$
Assume further that:
\begin{itemize}
\item[\rm (i)] For all $1\leq i,j\leq k$,
$\bbE[\Phi_{N}^{(i)}(\eta_{N, \cdot})
\Phi_{N}^{(j)}(\eta_{N, \cdot})]\to V(i,j)$ for some matrix $V$ as $N\to\infty$;

\item[\rm (ii)] For each $1\leq i\leq k$, $\bbE[\Phi_{N}^{(i)}(\xi_{\cdot})^4]\to 3\,
V(i,i)^2$
as $N\to\infty$, where we have replaced $(\eta_{N,t})_{t\in \T}$ by i.i.d.\
standard normal random variables $(\xi_t)_{t\in \T}$;

\item[\rm (iii)] The maximal influence of each
variable $\eta_{N,t}$ on the polynomials of degree one
among $(\Phi_{N}^{(i)}(\eta_{N,\cdot}))_{1\leq i\leq k}$ is asymptotically negligible,
i.e., for each $1\leq i\leq k$,
\begin{equation}\label{infcond}
\max_{t\in\T} |\phi_N^{(i)} (\{t\})| \to 0\qquad  \text{as} \quad N\to\infty.
\end{equation}
\end{itemize}
Then $(\Phi_{N}^{(i)}(\eta_{N,\cdot}))_{1\leq i\leq k}$ converge in law to a
centered Gaussian vector with covariance $V$.
\end{theorem}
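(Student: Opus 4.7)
The plan is to decompose the proof into two largely independent steps: first an invariance (Lindeberg-type) replacement of the non-Gaussian variables $\eta_{N,t}$ by i.i.d.\ standard Gaussians $\xi_t$, and then an application of the multivariate fourth-moment theorem for Gaussian chaoses (the Peccati-Tudor extension of~\cite{NP05}), which converts hypotheses (i) and (ii) into joint convergence to a Gaussian vector with covariance $V$.

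For the invariance step I would apply the Lindeberg principle of~\cite{CSZ13} (itself an extension of~\cite{R79, MOO10} tailored to polynomial chaos expansions) to the vector $(\Phi_N^{(1)}(\eta_{N,\cdot}), \ldots, \Phi_N^{(k)}(\eta_{N,\cdot}))$. The scheme is to swap each $\eta_{N,t}$ for $\xi_t$ one variable at a time and Taylor-expand any $C^3$ bounded test function $F\colon \R^k \to \R$ to third order; the telescoping error is bounded by a sum over $t \in \T$ involving $\E[|\eta_{N,t}|^3 \wedge \eta_{N,t}^2]$ together with partial influences of the $\Phi_N^{(i)}$. The degree-one contributions are then controlled by the maximal-influence hypothesis (iii); the higher-degree contributions are controlled via hypercontractive estimates on Wiener chaoses, whose applicability follows from the uniform variance bound implied by (i). Since $(\eta_{N,t}^2)$ is only assumed uniformly integrable, the missing uniform $L^3$ bound is compensated by a truncation $|\eta_{N,t}| \le \delta_N$ with $\delta_N \to \infty$ chosen slowly enough, exactly as in~\cite{CSZ13}. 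The conclusion is that for every bounded continuous $F$,
\begin{equation*}
\E\bigl[F\bigl(\Phi_N^{(1)}(\eta_{N,\cdot}), \ldots, \Phi_N^{(k)}(\eta_{N,\cdot})\bigr)\bigr] - \E\bigl[F\bigl(\Phi_N^{(1)}(\xi_\cdot), \ldots, \Phi_N^{(k)}(\xi_\cdot)\bigr)\bigr] \xrightarrow[N\to\infty]{} 0,
\end{equation*}
so the limiting law is the same whether computed with $\eta$ or with $\xi$.

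For the Gaussian step, each $\Phi_N^{(i)}(\xi_\cdot)$ is a homogeneous multilinear form of degree $d_i$ in the i.i.d.\ Gaussian family $(\xi_t)_{t\in\T}$ and therefore belongs to the $d_i$-th Wiener chaos of the isonormal process generated by $\xi$. I would then invoke the multivariate Peccati-Tudor theorem: a sequence of random vectors whose coordinates lie in fixed Wiener chaoses converges jointly to a centered Gaussian vector with covariance $V$ provided that (a) the covariance matrix converges to $V$, and (b) each coordinate's fourth moment converges to $3\,V(i,i)^2$. These are precisely hypotheses (i) and (ii), yielding the desired convergence in law in the Gaussian reference model, which by the previous paragraph transfers back to the original variables.

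The main obstacle I anticipate is the Lindeberg step, specifically the asymmetry of hypothesis (iii), which only constrains maximal influences of the degree-one components. For the degree-$d_i \ge 2$ parts, the individual influences $\sum_{I \ni t} \phi_N^{(i)}(I)^2$ do not vanish a priori, yet the telescoping argument requires controlling the effect of swapping one variable at a time. The resolution is to exploit the hypercontractivity of Wiener chaos on the Gaussian side: combined with the marginal fourth-moment bound (ii), it forces the higher-degree influences to be asymptotically negligible as well, so that each telescoping increment is of the right order. The remaining technical nuisance, the absence of uniform $L^3$ bounds for $\eta_{N,t}$, is handled by the same slow-truncation device used in~\cite{CSZ13}, which couples uniform integrability of $\eta_{N,t}^2$ with a quantitative control of the truncation error.
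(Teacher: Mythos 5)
Your proposal is correct and follows essentially the same route as the paper: a Lindeberg replacement based on \cite{CSZ13} (with the missing higher moments of $\eta_{N,t}$ handled by truncation and the uniform integrability of $\eta_{N,t}^2$), followed by the multivariate fourth-moment (Peccati--Tudor type) theorem of \cite{NPR10} applied to the Gaussian surrogates, with hypothesis (iii) controlling the degree-one influences and the fourth-moment hypothesis (ii) controlling the influences of the degree-$d_i\ge 2$ components. The one imprecision is your appeal to hypercontractivity for that last point: hypercontractivity only gives upper moment bounds and cannot by itself force influences to vanish; the fact actually needed (and the one the paper cites) is the lower bound on the fourth cumulant of a degree-$\ge 2$ Gaussian multilinear form in terms of the maximal influence, cf.\ \cite[Proposition~1.6 and (1.9)]{NPR10}, which is a citation-level correction rather than a gap.
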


\begin{proof}
If we replace $(\eta_{N,t})_{t\in\bbT}$ by
standard Gaussians $(\xi_t)_{t\in\bbT_N}$,
Theorem~\ref{T:4mom} holds without the need of assuming condition (iii), thanks to
\cite[Theorem 7.6]{NPR10}, which is a multi-dimensional extension of the fourth moment
theorem \cite{dJ90, NP05}.

To justify the replacement with Gaussians, we show that the vectors
$(\Phi_{N}^{(i)}(\eta_{N,\cdot} ))_{1 \le i \le k}$ and
$(\Phi_{N}^{(i)}(\xi_\cdot ))_{1 \le i \le k}$ have the same limit in law as $N\to\infty$.
By the Cr\'amer-Wold device, it is enough to consider a linear
combination $\Phi_N = \sum_{i=1}^k c_i \Phi_N^{(i)}$, which is a multilinear
polynomial with degree $d := \max_{1 \le i \le k} d_i$ and with
variance $\sigma^2_N \le k \sum_{i=1}^k c_i^2 \sum_{I \subseteq \T} \phi_N^{(i)}(I)^2$
(by Cauchy-Schwarz).
By the Lindeberg principle in \cite[Theorem~2.6]{CSZ13}, for
any smooth and bounded $f:\R\to\R$ there is $C = C_{d, f} < \infty$ such that
for every $M > 0$
\begin{gather*}
	\Big| \bbE\big[ f\big( \Phi_{N}(\eta_{N,\cdot} ) \big] -
	\bbE\big[ f \big( \Phi_{N}(\xi_\cdot) \big] \Big| \le
	C \, \sigma_N^2 \Bigg\{ m_2^{>M} +
	 M^{d} \, \max_{t\in \T}
	\sqrt{\mathrm{Inf}_t(\Phi_N) } \Bigg\} \,, \\
	\text{where} \qquad
	m_2^{>M} := \max_{X \in \bigcup_{N\in\N, t\in\T} \{\eta_{N,t}, \, \xi_t\}}
	\bbE[X^2 \, \ind_{\{|X| > M\}}] \,, \qquad
	\mathrm{Inf}_t(\Phi_N) = \sum_{i=1}^k \sum_{I\ni t} \phi_N^{(i)} (I)^2 \,.
\end{gather*}
By the uniform integrability assumption on $\eta_{N,t}^2$,
we can fix $M > 0$ large enough so that
$m_2^{>M}$ is as small as we wish.
Since $\sup_{N\in\N} \sigma^2_N < \infty$ by assumption (i),
the proof is completed if we
show that $\max_{t\in\T} \sqrt{\mathrm{Inf}_t(\Phi_N)} \to 0$ as $N\to\infty$.
For polynomials $\Phi_N^{(i)}$ of degree $d_i = 1$
this holds by assumption (iii),
while for $d_i \ge 2$ it is a consequence of the fourth-moment assumption (ii),
as shown in \cite[Proposition~1.6 and (1.9)]{NPR10}.
\end{proof}

\section{Proof of key step for Theorem~\ref{thm:subcritical}}\label{main_estimate}

In this section we prove the key step {\bf (K)} in the proof of Theorem~\ref{thm:subcritical},
formulated in Section~\ref{ss:proofsteps},
which asserts that the building blocks of the chaos expansion have
asymptotic Gaussian behavior.
This result actually holds in great generality and is of independent interest,
so it is worth stating explicitly the assumptions we need.

We work on $\Z^d$ for fixed $d\in\N_0$
(with $\Z^0 := \{0\}$).
For every $n\in\N$, we fix a function $q_n(\cdot) \in L^2(\Z^d)$
---not necessarily a probability kernel---
and we define (cf.\ \eqref{eq:replica})
\begin{equation}\label{eq:RNq}
	\Ro_N := \sum_{n=1}^N \| q_n \|^2
	= \sum_{n=1}^N \left( \sum_{x\in\Z^d} q_n(x)^2 \right) \,.
\end{equation}
In the following sections we will focus
on the special cases when $q_n(x) = \P(S_n = x)$ or $q_n = \P(n\in\tau)$,
with $S$ or $\tau$ satisfying the local limit theorems in Hypothesis~\ref{hypoth}.
However, in this section \emph{we only need to assume that
$\Ro_N$ is a slowly varying function which diverges
as $N\to\infty$}.
The basic case to keep in mind is $\Ro_N \sim C \log N$.

Let us fix $M\in\N$ and split
\begin{align}\label{I-partition}
\{1,2,\ldots, N\} = I_1 \cup I_2 \cup \dots \cup I_M,
\end{align}
where the intervals $I_i$ are defined by \eqref{aimP}.
This definition ensures that each $I_i$ contributes equally to $\Ro_N$, since\footnote{Note
that $\Ro_n - \Ro_{n-1} = o(\Ro_n)$ as $n\to\infty$,
by the slowly varying property,
hence $\Ro_{\sft_i} \sim \frac{i}{M} \Ro_N$.}
\begin{equation}\label{eq:Rsum}
	\sum_{m \in I_i}
	\|q_m\|^2 = \Ro_{\sft_i} - \Ro_{\sft_{i-1}}
	\underset{\,N\to\infty\,}{\sim} \frac{\Ro_N}{M} \,.
\end{equation}

\begin{definition}\label{def:Efin}
Let $E^\fin :=
\bigcup_{k \in \N} E^k$ be the set of all \emph{finite sequences $\bz = (z_1, \ldots, z_k)$
taking values in a given set $E$}. For $\bz \in E^k \subseteq E^\fin$, we denote by
$|\bz| = k$ the length of $\bz$.
We also let $\N^k_\uparrow$ be the subset of \emph{increasing} sequences $\bn \in \N^k$,
i.e., $n_1 < n_2 < \ldots < n_k$, and analogously we set $\N^\fin_\uparrow
:= \bigcup_{k\in\N} \N^k_\uparrow$.

\end{definition}

We focus on the random variables $\Theta^{N;M}_{i_1, \ldots, i_k}$ introduced
in \eqref{genk}, which can be conveniently reformulated as follows.
Given $\bi \in \{1,\ldots,M\}^\fin$,
we define a set of increasing sequences \emph{$\bn \in \N^\fin_\uparrow$
that are compatible with $\bi$}, denoted by $\bn \prec \bi$, as follows:
\begin{equation}\label{eq:i<n}
	\bn \prec \bi \quad \ \iff \quad \
	|\bn| = |\bi| \quad \text{and} \quad
	n_1 - n_0 \in I_{i_1}, \ \ldots,
	\ n_{|\bi|} - n_{|\bi|-1} \in I_{i_{|\bi|}} \,,
\end{equation}
where $n_0=0$. We can then define
\begin{gather} \label{thetai}
	\Theta^{N,M}_{\bi} :=
	\left(\frac{M}{\Ro_{N} }\right)^{\frac{|\bi|}{2}}
	\sum_{\bn \in \N^\fin_\uparrow: \ \bn \prec \bi} Q_{\bn} \,,
	\qquad \text{with}\\
	\label{eq:bq0}
	Q_{\bn} := \sum_{\bx \in (\Z^d)^{|\bn|}} \,
	\prod_{j=1}^{|\bn|} q_{n_j - n_{j-1}} (x_j - x_{j-1}) \,
	\eta_{(n_j,x_j)}
	= \sum_{\bx \in (\Z^d)^{|\bn|}} q_{\bn}(\bx) \, \eta_{(\bn,\bx)} \,,
\end{gather}
where $n_0 = x_0 = 0$ and we have introduced the further abbreviations
\begin{gather}
	\label{eq:qeta}
	q_{\bn}(\bx) := \prod_{j=1}^{|\bn|} q_{n_j - n_{j-1}} (x_j - x_{j-1}) \,, \qquad
	\eta_{(\bn,\bx)} := \prod_{j=1}^{|\bn|} \eta_{(n_j,x_j)} \,.
\end{gather}
Here $(\eta_{(n,x)})_{(n,x) \in \N \times \Z^d}$
are independent random variables with
$\bbE[\eta_{(n,x)}]=0$, $\bbvar[\eta_{(n,x)}]=1$.
(In our case, cf.\ \eqref{eq:uffano}, we actually have
$\bbvar[\eta_{(n,x)}] = 1 + o(1)$ as $N\to\infty$, because
\begin{equation*}
	\bbvar[\eta_{(n,x)}]= \frac{\exp(\lambda(2\beta_N) - 2\lambda(\beta_N)) - 1}{\beta_N^2}
	= 1 + O(\beta_N) \,,
\end{equation*}
since $\lambda(\beta) = \frac{1}{2}\beta^2 + O(\beta^3)$ as $\beta \to 0$.
To lighten notation, we assume that $\bbvar[\eta_{(n,x)}]=1$.)

We allow $\eta_{(n,x)} = \eta^{(N)}_{(n,x)}$
to depend on $N\in\N$, as in \eqref{eq:uffano}.
We only need to assume that
\emph{the squares $((\eta^{(N)}_{(n,x)})^2)_{N\in\N, \,(n,x) \in \N \times \Z^d}$ are
uniformly integrable}. Note that this holds for \eqref{eq:uffano}, as one easily checks
by showing boundedness of $\bbE [(\eta^{(N)}_{(n,x)})^4]$, see \cite[eq. (6.27)]{CSZ13}.

We now state our main result, which generalizes the key step {\bf (K)} in Section \ref{ss:proofsteps}.
Recall that the space $D_M \subseteq \{1,\ldots,M\}^\fin$ of
\emph{dominated sequences} is defined as (cf.\ \eqref{eq:DM0} and \eqref{eq:DM}):
\begin{equation}\label{eq:DM2}
	D_M := \big\{ \bi \in \{1,\ldots, M\}^\fin: \ \
	i_1 > i_2, i_3, \ldots, i_{|\bi|}, \
	\ |i_j - i_{j'}| \ge 2 \ \forall j \ne j' \big\} \,.
\end{equation}

\begin{proposition}\label{thm:main1}
Assume that $\Ro_N$ in \eqref{eq:RNq}
is a slowly varying function which diverges
as $N\to\infty$. For every fixed $M\in\N$, the random variables
$(\Theta^{N,M}_{\bi})_{\bi \in D_M}$ indexed by dominated sequences in $D_M$,
converge jointly
in law as $N\to\infty$ to i.i.d.\ standard Gaussians $(\zeta_{\bi})_{\bi \in D_M}$.
\end{proposition}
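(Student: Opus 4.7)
The plan is to apply the fourth-moment theorem of Theorem~\ref{T:4mom} to each finite subcollection of $(\Theta^{N,M}_{\bi})_{\bi\in D_M}$, targeting convergence to a centered Gaussian vector with identity covariance; joint convergence of the full countable family then follows from finite-dimensional convergence. The three hypotheses of Theorem~\ref{T:4mom} that need to be verified are pairwise covariance convergence to $\delta_{\bi,\bi'}$, individual fourth-moment convergence to $3$, and vanishing maximal influence for the degree-one polynomials in the collection.

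Conditions (i) and (iii) should be direct. For (i), since $\Theta^{N,M}_{\bi}$ is multilinear of degree $|\bi|$ in the independent mean-zero unit-variance $\eta_{(n,x)}$ and the blocks $I_1,\ldots,I_M$ are disjoint, any $\bn\in\N^\fin_\uparrow$ is $\prec$-compatible with exactly one $\bi$. Consequently the monomials $Q_{\bn}$ appearing in $\Theta^{N,M}_{\bi}$ and $\Theta^{N,M}_{\bi'}$ are $L^2$-orthogonal for $\bi\ne\bi'$, so those covariances vanish identically. For $\bi=\bi'$, the factorisation $\sum_{\bx}q_{\bn}(\bx)^2=\prod_j\|q_{n_j-n_{j-1}}\|^2$ together with \eqref{eq:Rsum} gives
\begin{equation*}
\bbE\bigl[(\Theta^{N,M}_{\bi})^2\bigr] = \Big(\frac{M}{\Ro_N}\Big)^{|\bi|}\prod_{j=1}^{|\bi|}\sum_{m\in I_{i_j}}\|q_m\|^2 \asto{N} 1.
\end{equation*}
For (iii), the only degree-one elements of the family are $\Theta^{N,M}_{(i)}$, whose coefficient on $\eta_{(n,x)}$ is $\sqrt{M/\Ro_N}\,q_n(x)\,\ind_{n\in I_i}$; using $q_n(x)\le 1$, as in our applications to probability kernels, this is bounded by $\sqrt{M/\Ro_N}\to 0$ since $\Ro_N\to\infty$.

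The substantive step is condition (ii): show that $\bbE[(\Theta^{N,M}_{\bi})^4]\to 3$ once the $\eta$'s are replaced by standard Gaussians. Viewing $\Theta^{N,M}_{\bi}$ as a multiple Wiener--It\^o integral of order $k=|\bi|$ against the symmetrised kernel $f^{N,M}_{\bi}$ supported on increasing sequences $\bn\prec\bi$, the standard diagram formula (cf.~\cite{NPR10}) reads
\begin{equation*}
\bbE\bigl[(\Theta^{N,M}_{\bi})^4\bigr] - 3\,\bbE\bigl[(\Theta^{N,M}_{\bi})^2\bigr]^2 = \sum_{r=1}^{k-1} C_{k,r}\,\bigl\|f^{N,M}_{\bi} \star_r f^{N,M}_{\bi}\bigr\|_2^2,
\end{equation*}
so it suffices to show that each $r$-contraction has vanishing $L^2$ norm. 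Expanding $\|f^{N,M}_{\bi}\star_r f^{N,M}_{\bi}\|_2^2$ in our setting, and using the product structure $q_{\bn}(\bx)=\prod_j q_{n_j-n_{j-1}}(x_j-x_{j-1})$, one obtains a sum of products of block contributions: a slot of $\bi$ that is traversed identically by the two copies yields the planar factor $\sum_{m\in I_{i_j}}\|q_m\|^2\sim\Ro_N/M$, while any contracted slot yields a smaller crossing factor in which two distinct time increments are forced to meet at a common spatial coordinate. The uniform bound $q_n(x)\le 1$ together with Cauchy--Schwarz bounds every crossing factor by a quantity that is $o(\Ro_N/M)$; after multiplying by the global prefactor $(M/\Ro_N)^{2k}$, each non-planar contribution is $o(1)$, and (ii) follows. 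The main obstacle is therefore the combinatorial bookkeeping of the contraction step, namely enumerating all patterns of $r$ matched positions between the two copies and verifying that each non-trivial pattern loses at least one factor of $\Ro_N/M$ relative to the planar pairing; once this is in place, Theorem~\ref{T:4mom} yields the desired finite-dimensional Gaussian limit, and hence Proposition~\ref{thm:main1}.
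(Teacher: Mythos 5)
Your overall framework — reducing to Theorem~\ref{T:4mom}, checking orthogonality and unit variance for condition (i), and the influence bound for condition (iii) — matches the paper. The gap is in condition (ii), which is the entire substance of Proposition~\ref{thm:main1}. You assert that in the contraction norms $\|f^{N,M}_{\bi}\star_r f^{N,M}_{\bi}\|_2^2$ every ``crossing factor'' is $o(\Ro_N/M)$ by $q_n(x)\le 1$ and Cauchy--Schwarz, so that all non-planar contributions die. As stated, this argument uses neither the fact that $\bi$ is a \emph{dominated} sequence (cf.\ \eqref{eq:DM2}) nor the slow variation of $\Ro_N$ (only its divergence), and it therefore proves too much: it would apply verbatim to $\bi=(i_1,i_2)$ with $i_1\le i_2-2$, for which the limit is the product $\zeta_{i_1}\zeta_{i_2}$ of two independent standard Gaussians, whose fourth moment is $9\ne 3$ — so the contractions do \emph{not} vanish there. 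Quantitatively, the dangerous configurations are those in which the four sequences match exactly in pairs ($2|\bi|$ distinct space-time points) but interleave, i.e.\ $[\ba]\cup[\bb]\cup[\bc]\cup[\bd]$ has a single connected component; these are precisely the nontrivial contraction patterns. For them, Cauchy--Schwarz gives a factor $\Ro_N$ (up to constants) for \emph{each} of the $2|\bi|$ free points, which exactly cancels the prefactor $(M/\Ro_N)^{2|\bi|}$: the naive bound is $O(1)$, not $o(1)$, and no amount of bookkeeping of matching patterns alone can improve this.

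What actually kills these configurations is the scale separation built into $D_M$: since $i_\ell\le i_1-2$ for all $\ell\ge 2$, the first increment of each sequence is at least $\sft_{i_1-1}$ while the total span of all later increments is at most $|\bi|\,\sft_{i_1-2}$, and $\sft_{i_1-2}=o(\sft_{i_1-1})$ because $\Ro_N$ is slowly varying (cf.\ \eqref{aimP}). Hence in a one-component configuration the first index of the ``crossing'' sequence is confined to a window $(f_1, f_1+|\bi|\,\sft_{i_1-2}]$ with $f_1\ge \sft_{i_1-1}$, and the corresponding sum is bounded by $\sqrt{(\Ro_{f_1+|\bi|\sft_{i_1-2}}-\Ro_{f_1})\,\Ro_N}=o(\Ro_N)$, again by slow variation; this single lost factor is what makes the interleaved terms negligible (this is the paper's estimate \eqref{eq:constra}--\eqref{case3b}, via \eqref{Sn2-Cauchybis}). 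This is exactly the step your proposal defers to ``combinatorial bookkeeping'', so the proof is incomplete without it. Two smaller points: the proposition assumes only $q_n\in L^2(\Z^d)$, so $q_n(x)\le 1$ is not available in general — the paper substitutes $\max_{n\le N}\|q_n\|_\infty^2\le\max_{n\le N}(\Ro_n-\Ro_{n-1})=o(\Ro_N)$, cf.\ \eqref{eq:ifty}, which you also need for your influence bound; and the sub-pairing configurations (fewer than $2|\bi|$ distinct points), which your contraction formalism hides in the symmetrization, do need the counting estimate of the paper's Case~2 rather than being absorbed silently.
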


\begin{proof}

We observe that, by \eqref{eq:bq0},
\begin{gather}\label{eq:Qprop}
	\bbE[Q_{\bn}] = 0 \,, \qquad
	\bbE[Q_{\bn} \, Q_{\bn'}] =
	\|q_{\bn}\|^2 \, \ind_{\{\bn = \bn'\}} \,, \\
	\label{qnvec}
	\text{where} \qquad
	\|q_{\bn}\|^2 := \prod_{j=1}^{|\bn|} \| q_{n_j - n_{j-1}} \|^2
	= \prod_{j=1}^{|\bn|} \left( \sum_{x \in \Z^d} q_{n_j - n_{j-1}} (x)^2 \right) \,.
\end{gather}
We stress that $\bbE[Q_{\bn} \, Q_{\bn'}]=0$ for $\bn \ne \bn'$,
because $\bn = (n_1, \ldots, n_{|\bn|})$ then contains some value, say $n_j$,
which does not appear
in $\bn'$ (or the other way around), so the random variables
$\eta_{(n_j, x_j)}$ appearing in the product
$Q_{\bn} \, Q_{\bn'}$
are unpaired and the expectation yields zero.

It is now easy to see that the random variables $\Theta^{N,M}_{\bi}$,
for $\bi \in D_M$,
are uncorrelated and have asymptotically (as $N\to\infty$ for fixed $M$) unit variance. In fact
\begin{equation*}
	\bbE\big[ \Theta^{N,M}_{\bi}  \Theta^{N,M}_{\bi'} \big] = 0
	\qquad \forall\, \bi \ne \bi' \,,
\end{equation*}
because if $\bn \prec \bi$ and $\bn' \prec \bi'$, then $\bn \ne \bn'$ by \eqref{eq:i<n}, and
hence $\bbE[Q_{\bn} \, Q_{\bn'}] = 0$ by \eqref{eq:Qprop}.
Next, by \eqref{thetai} and \eqref{eq:Qprop},
\begin{equation} \label{eq:varcom}
	\bbE\big[ (\Theta^{\,N,M}_{\bi} \,)^2\big]
	= \left(\frac{M}{\Ro_{N} }\right)^{|\bi|}
	\sum_{\bn \prec \bi} \|q_{\bn}\|^2
	= \left(\frac{M}{\Ro_{N} }\right)^{|\bi|}
	\prod_{j=1}^{|\bi|} \left( \sum_{m \in I(i_j)} \|q_m\|^2\right)
	\xrightarrow[\,N\to\infty\,]{} 1 \,,
\end{equation}
where in the last step we used \eqref{eq:Rsum}.

We now apply the multi-dimensional version of the fourth moment theorem, Theorem~\ref{T:4mom},
to prove that $(\Theta^{N,M}_{\bi})_{\bi\in D_M}$ converge to i.i.d.\ standard Gaussians. We have
just verified condition (i) in Theorem~\ref{T:4mom}, and the influence condition (iii) on $\Theta^{N,M}_{\bi}$ with
$|\bi|=1$ clearly holds. It only remains to verify condition (ii), i.e., assuming that $(\eta_{(n,x)})_{(n,x) \in \N \times \Z^d}$ are
i.i.d.\ standard normal, we need to show that
\begin{equation}\label{eq:tosho}
	\lim_{N\to\infty} \bbE \big[ (\Theta^{N,M}_{\bi})^4 \big] = 3
	\qquad \forall\, \bi \in D_M \,.
\end{equation}
Recalling \eqref{thetai}, we can write
\begin{equation}\label{sumTheta}
\bbE \big[(\Theta^{N;M}_{\bi})^4  \big]	
= \left(\frac{M}{\Ro_{N}}\right)^{2|\bi|}\!\!
\sum_{\ba, \bb, \bc, \bd \prec \bi} \bbE [Q_{\ba} \, Q_{\bb} \, Q_{\bc} \, Q_{\bd}] \,,
 \end{equation}
where by \eqref{eq:bq0} and \eqref{eq:qeta},
\begin{equation}\label{prodQ}
\bbE [Q_{\ba} \, Q_{\bb} \, Q_{\bc} \, Q_{\bd}] =
 \sum_{\bx, \by, \bz, \bw \in (\Z^d)^{|\bi|}} \!\!
 q_{\ba}(\bx) \, q_{\bb}(\by) \, q_{\bc}(\bz) \, q_{\bd}(\bw)
\, \bbE \big[  \eta_{(\ba,\bx)} \,
\eta_{(\bb,\by)} \, \eta_{(\bc,\bz)} \, \eta_{(\bd,\bw)} \big] \,.
 \end{equation}

Let $(\ba,\bx) = ((a_1,x_1), (a_2,x_2), \ldots, (a_{|\bi|}, x_{|\bi|}))
\in (\N \times \Z^d)^{|\bi|}$ denote the
sequence of space-time points determined by $\ba$ and $\bx$,
and let $\sfp \in \N$ be the number of \emph{distinct space-time points}
in the union of the four sequences $(\ba,\bx)$, $(\bb, \by)$, $(\bc, \bz)$, $(\bd, \bw)$:
\begin{equation}\label{eq:sfp}
	\sfp := \big| (\ba,\bx) \cup (\bb, \by) \cup (\bc, \bz) \cup (\bd, \bw) \big|
	= \sum_{(n,r) \in \N \times \Z^d} \ind_{\{(n,r) \in
	(\ba,\bx) \cup (\bb, \by) \cup (\bc, \bz) \cup (\bd, \bw)\}} \,.
\end{equation}

The first step toward \eqref{eq:tosho}
is to show that we can restrict the two sums in \eqref{sumTheta}-\eqref{prodQ}
to configurations of $(\ba,\bx), (\bb, \by), (\bc, \bz), (\bd, \bw)$ satisfying
\begin{equation} \label{eq:confi}
	\sfp = 2 |\bi| \,.
\end{equation}
Indeed, we can rule out the two cases $\sfp>2|\bi|$ and $\sfp<2|\bi|$ as follows.

\medskip

\noindent
{\bf Case 1.} $\sfp > 2 |\bi|$.
Since there are $4|\bi|$ space-time points (including multiplicity) in the four sequences
$(\ba,\bx)$, $(\bb, \by)$, $(\bc, \bz)$, $(\bd, \bw)$,
there must be at least one space-time point, say $(a_m, x_m)$,
which will not be matched in pair with one of the elements
in $(\bb, \by)\cup (\bc, \bz)\cup (\bd, \bw)$.
Then the expectation in \eqref{prodQ} vanishes because $\eta_{(a_m, x_m)}$ is not paired to
any other $\eta$ random variable in  $\eta_{(\bb,\by)}$, $\eta_{(\bc,\bz)}$ or $\eta_{(\bd,\bw)}$ (recall \eqref{eq:qeta}).
Therefore the contribution to the sum in \eqref{sumTheta} is zero in this case. \qed
\medskip

\noindent
{\bf Case 2.} $\sfp < 2|\bi|$. Recalling \eqref{eq:sfp}, for each $1\leq p< 2|\bi|$, set
\begin{equation}\label{cl2}
  \mathcal{S}_N^{(p)}:= \left(\frac{M}{\Ro_{N}}\right)^{2|\bi|}
 \!\!\!\!\!\!\!  \sumtwo{(\ba,\bx), (\bb, \by), (\bc, \bz)} {(\bd, \bw) \text{ such that }
   \sfp = p}
  \!\!  q_{\ba}(\bx) \, q_{\bb}(\by) \, q_{\bc}(\bz) \, q_{\bd}(\bw)
\, \bbE \big[  \eta_{(\ba,\bx)} \,
\eta_{(\bb,\by)} \, \eta_{(\bc,\bz)} \, \eta_{(\bd,\bw)} \big] \,.
\end{equation}
It suffices to show that $\mathcal{S}_N^{(p)} \to 0$ as $N \to \infty$,
for each $p < 2|\bi|$.

To lighten notation, we assume that $q_n(x) \ge 0$ (just
replace $q_n(x)$ by $|q_n(x)|$ in the following arguments).
Furthermore, we first consider the simplifying case when
\begin{equation}\label{eq:simple}
	q_n(x) \le 1 \,.
\end{equation}
We will use the fact that $\bbE \big[  \eta_{(\ba,\bx)} \,
\eta_{(\bb,\by)} \, \eta_{(\bc,\bz)} \, \eta_{(\bd,\bw)} \big]=0$ unless the individual $\eta$ variables match in pairs or quadruples,
since we have assumed the $\eta$'s to be i.i.d.\ standard normals in our attempt to verify Theorem~\ref{T:4mom}~(ii). In any event, note that
\begin{equation} \label{momentSn2}
	\big| \bbE \big[  \eta_{(\ba,\bx)} \,
	\eta_{(\bb,\by)} \, \eta_{(\bc,\bz)} \, \eta_{(\bd,\bw)} \big] \big| \le 3^{|\bi|}.
\end{equation}

If $\sfp = p$, then we can relabel $(\ba,\bx) \cup (\bb, \by) \cup (\bc, \bz) \cup (\bd, \bw) =
\{ (f_1, h_1), (f_2, h_2), \ldots, (f_p, h_p) \}$,
with $f_1 \le f_2 \le \cdots  \le f_p$, and we set $f_0 := 0$ (since $(f_i, h_i)$ are distinct space-time points, when $f_i = f_{i+1}$
we must have $h_i \ne h_{i+1}$). The sums in \eqref{sumTheta} and \eqref{prodQ} can then be rewritten as sums over
$(f_j, h_j)_{1\leq j\leq p}$, with another sum over all admissible assignments of $(\ba, \bx), (\bb, \by), (\bc, \bz), (\bd, \bw)$ to points in
$(f_j, h_j)_{1\leq j\leq p}$.

We start by summing over all admissible values of $(f_p, h_p)$. Denoting by
$m \in \{2, 4\}$ the number
of space-time points in $(\ba,\bx)$, $(\bb, \by)$, $(\bc, \bz)$, $(\bd, \bw)$ assigned to $(f_p,h_p)$ (for $m\in \{1,3\}$, the expectation in \eqref{prodQ} vanishes). The factors in \eqref{prodQ} involving
$(f_p,h_p)$ are
\begin{equation*}
	\prod_{i=1}^m q_{f_p - f_{r_i}}(h_p - h_{r_i})
\end{equation*}
for some $r_1, \ldots, r_m\in \{0,1,\ldots, p-1\}$.
Using the assumption \eqref{eq:simple} that $q_n\leq 1$, we get
\begin{equation}\label{Sn2-Cauchy}
  \begin{split}
   & \sum_{(f_p, h_p)} \prod_{i=1}^{m} q_{f_p-f_{r_i}}(h_p-h_{r_i})  \leq
   \sum_{(f_p,h_p)} q_{f_p-f_{r_1}}(h_p-h_{r_1})\, q_{f_p-f_{r_2}}(h_p-h_{r_2})\\
    &\qquad\qquad \leq \,\,
    \left(\sum_{(f_p,h_p)}
    q_{f_p-f_{r_1}}(h_p-h_{r_1})^2\,\right)^{1/2}
    \left(\sum_{(f_p,h_p)} q_{f_p-f_{r_2}}(h_p-h_{r_2})^2\,\right)^{1/2}\\
   &\qquad\qquad \leq \sum_{1\leq n \leq N} \sum_{x\in \bbZ^d} q^2_n(x) =\Ro_{N} \,.
 \end{split}
\end{equation}
The last inequality holds because the range of $f_p - f_{r_i}$ is contained in $\{1,\ldots, N\}$, by \eqref{eq:i<n}.

We can iterate this estimate, summing successively over $(f_{p-1},h_{p-1})$,
$(f_{p-2},h_{p-2})$, \ldots, $(f_1,h_1)$.
This, together with \eqref{momentSn2},
shows that for fixed $M\in\N$, as $N\to\infty$,
  \begin{eqnarray}\label{Sn2est}
  \mathcal{S}_N^{(p)} \leq
  3\hat\Ci \,
  \left(\frac{M}{\Ro_{N}}\right)^{2|\bi|} \Ro_{N}^{p}
  = O\Big( \Ro_{N}^{p-2|\bi|}\Big),
  \end{eqnarray}
where $\hat\Ci$ depends only on $|\bi|$ and $p$ and bounds the number of ways of assigning
$(\ba, \bx)$, $(\bb, \by)$, $(\bc, \bz)$, $(\bd, \bw)$ to $(f_\ell, h_\ell)_{1 \le \ell \le p}$.
Since $ p<2|\bi|$, relation \eqref{Sn2est}
shows that $\mathcal{S}_N^{(p)}$ converges to zero as $N$ tends to infinity.

We now show how to remove the assumption $q_n\leq 1$ in \eqref{eq:simple}. Setting
\begin{equation} \label{eq:qninf}
	\|q_n\|_\infty := \max_{x\in\Z^d} \, q_n(x) \,,
\end{equation}
the r.h.s.\ of \eqref{Sn2-Cauchy} is replaced
by $\Ro_N \big( \max_{1\leq n\leq N}\|q_n\|_\infty^{m-2} \big)$.
As we sum over $(f_{p}, h_{p})$, \ldots, $(f_1, p_1)$, we collect exactly
$4|\bi|-2p$ factors of $\max_{1\leq n\leq N}\|q_n\|_\infty$. Consequently, \eqref{Sn2est} becomes
  \begin{eqnarray}\label{Sn2estbis}
  \mathcal{S}_N^{(p)} = O\Big( \Ro_{N}^{p-2|\bi|}
  \big(\max_{1\leq n\leq N}\|q_n\|_\infty\big)^{4|\bi| - 2p}  \Big) \,.
\end{eqnarray}
However, by \eqref{eq:RNq} and \eqref{eq:qninf},
\begin{equation}\label{eq:ifty}
\max_{1\leq n\leq N}\|q_n\|_\infty^2 \le \max_{1\leq n\leq N}(\Ro_n - \Ro_{n-1}) = o(\Ro_N),
\end{equation}
since $\Ro_N$ is slowly varying and divergent. Therefore $\mathcal{S}_N^{(p)} \to 0$ also in the general case.\qed

\medskip

Continuing with the proof of \eqref{eq:tosho}, we may now restrict the sums in \eqref{sumTheta} and \eqref{prodQ}
to configurations satisfying $\sfp=2|\bi|$ (recall \eqref{eq:sfp}). This means that
\emph{the $4|\bi|$ space-time points among
$(\ba, \bx)$, $(\bb, \by)$, $(\bc, \bz)$, $(\bd, \bw)$ match exactly in pairs}
(i.e.\ coincide two by two).

As before, let $(f_i, h_i)_{1\leq i\leq p}$, with $f_1 \le f_2 \le \ldots \le f_p$ and $p = 2|\bi|$, be the distinct space-time
points occupied by $(\ba, \bx)\cup (\bb, \by)\cup (\bc, \bz)\cup(\bd, \bw)$.
In principle one could have $f_i = f_{i+1}$ (necessarily with $h_i \ne h_{i+1}$),
but such configurations give a negligible contribution in \eqref{sumTheta}, because this leaves at most
$p-1$ free coordinates $f_j$ to sum over, each of which gives by \eqref{Sn2-Cauchy} a contribution of
at most $\Ro_N$ (assuming $q_n\leq 1$; otherwise use
\eqref{eq:ifty}), while the prefactor in \eqref{cl2}
decays as $\Ro_N^{-p}$. As a consequence,
we may assume that $f_1 < f_2 < \ldots < f_p$,
which means that
\emph{the time points among $\ba, \bb, \bc, \bd$ have to match exactly
in pairs}.

\smallskip

We now make a further restriction.
Let $[\ba] := [a_1, a_{|\bi|}] \subseteq \R$ be the smallest interval
containing all the points in the (increasing) sequence $\ba = (a_1, a_2, \ldots, a_{|\bi|})$.
Then $[\ba] \cup [\bb] \cup [\bc] \cup [\bd]$ is a union of disjoint
closed intervals (\emph{connected components}) whose number can range from one to four.
We now show that we can restrict the sum
in \eqref{sumTheta} to configurations of $\ba, \bb, \bc, \bd$ with
\emph{exactly two connected components}. We distinguish between two cases.

\medskip
\noindent
{\bf Case 3}. {\em Three or four connected components.}
Since $|\ba| = |\bb| = |\bc| = |\bd| = |\bi|$, we must have
\begin{equation*}
	|\ba \cup \bb\cup \bc\cup \bd|
	\geq 3|\bi| \,,
\end{equation*}
therefore also $\sfp \geq 3|\bi|$,
cf.\ \eqref{eq:sfp}, which has been excluded in {\bf Case~1}.\qed

\medskip
\noindent
{\bf Case 4.} {\em One connected component.}
Similar to \eqref{cl2}, it suffices to focus on
\begin{equation}\label{cl2bis}
 \mathcal{\hat{S}}_N := \left(\frac{M}{\Ro_{N}}\right)^{2|\bi|}
 \!\!\!\!\!\!\!\!\!  \sumthree{(\ba,\bx), (\bb, \by), (\bc, \bz), (\bd, \bw)}
 {\text{matching in pairs and}}{\text{forming one connected component}}
  \!\!\!\!\!\!  q_{\ba}(\bx) \, q_{\bb}(\by) \, q_{\bc}(\bz) \, q_{\bd}(\bw)
\end{equation}
and show that $\mathcal{\hat{S}}_N \to 0$ as $N \to\infty$
(note that $\bbE \big[  \eta_{(\ba,\bx)} \,
\eta_{(\bb,\by)} \, \eta_{(\bc,\bz)} \, \eta_{(\bd,\bw)} \big] = 1$
because of the ``matching in pairs'' condition). We will show that the ``one connected component''
condition effectively leads to the loss of a degree of freedom in the summation.

Without loss of generality, assume that $a_1 = \min\{a_1,b_1,c_1,d_1\}$
is the smallest among all time indices in $\ba, \bb,\bc, \bd$. Then it has to match either
$b_1$, $c_1$ or $d_1$. Say $a_1=b_1=f_1$. It follows that $c_1=f_{u}$ for some
$u \in \{2, \ldots, p\}$.
The constraints of matching in pairs and $[\ba]\cup[\bb]\cup[\bc]\cup[\bd]$ having
{\it one connected component} imply that either $c_1\leq a_{\bar k}$
or $c_1\leq b_{\bar k}$ for some $\bar k \ge 2$; w.l.o.g., assume that $c_1\leq a_{\bar k}$.
Since $\ba \prec \bi$, by \eqref{aimP} and \eqref{eq:i<n}, this implies
\begin{align*}
	f_1 = a_1 \,\,<\,\, f_u = c_1 \le \,\, a_{\bar k} &\le \,\,
	a_1 + \sft_{i_2} + \sft_{i_3} + \ldots + \sft_{i_{\bar k}}\\
	&\le \,\, f_1 + (\bar k-1) \, \sft_{i_1 - 2}
	\,\,\le \,\, f_1 + |\bi| \, \sft_{i_1 - 2}  \,,
\end{align*}
where the last inequality holds because
$i_\ell \le i_1 - 2$ for all $\ell \in \{2,\ldots, |\bi|\}$,
since $\bi$ is a dominated sequence, cf.\ \eqref{eq:DM2}.
Also note $f_1 = a_1 \ge \sft_{i_1-1}$, again by \eqref{aimP} and \eqref{eq:i<n}.
Therefore
\begin{equation}\label{eq:constra}
	f_1 = a_1 \ge \sft_{i_1-1} \qquad \text{and} \qquad
	f_u = c_1 \in (f_1, f_1 + \bar m_1] \,, \qquad
	\text{where} \quad
	\bar m_1 := |\bi| \, \sft_{i_1-2} \,.
\end{equation}

We can now sum \eqref{cl2bis}
over the variables $(f_1,h_1), \ldots, (f_p, h_p)$
subject to \eqref{eq:constra} for some $2\leq u\leq p$.
The sum over $(f_p, h_p)$ has already been estimated in \eqref{Sn2-Cauchy} with $m=2$,
and is bounded by
$\Ro_N$. The same bound $\Ro_N$ applies to the sum
over $(f_{\ell}, h_{\ell})$ for each $\ell = p-1, p-2, \ldots, u+1$. The sum over $(f_u, h_u)$, in view of \eqref{eq:constra}, is bounded by
\begin{equation*}
	\sum_{f_u \in (f_1, f_1 + \bar m_1]} \,
   \sum_{h_u \in \Z^d} q_{f_u - f_{r_1}}(h_u - h_{r_1})\,
   q_{f_u-f_{r_2}}(h_u-h_{r_2}) \,,
\end{equation*}
for some $r_1, r_2 \in \{0, \ldots, u-1\}$.
Since $f_u = c_1$ is the first index of the sequence
$\bc$, we have either $r_1 = 0$ or $r_2=0$; w.l.o.g., assume $r_1=0$. We then have
(recall \eqref{eq:qeta})
\begin{equation}\label{Sn2-Cauchybis}
  \begin{split}
   & \sumtwo{f_u \in (f_1, f_1 + \bar m_1]}{h_u \in \Z^d} q_{f_u}(h_u)\, q_{f_u-f_{r_2}}(h_u-h_{r_2})\\
    &\qquad \leq \,\,
    \left(\sumtwo{f_u \in (f_1, f_1 + \bar m_1]}{h_u \in \Z^d}
    q_{f_u}(h_u)^2\,\right)^{1/2}
    \left(\sumtwo{f_u \in (f_1, f_1 + \bar m_1]}{h_u \in \Z^d}
    q_{f_u-f_{r_2}}(h_u-h_{r_2})^2\,\right)^{1/2}\\
   &\qquad \leq
   \left( \sumtwo{f_1 < n \leq f_1 + \bar m_1}{x\in \bbZ^d} q^2_n(x) \right)^{1/2}
   \left( \sumtwo{1\leq n \leq N}{x\in \bbZ^d} q^2_n(x) \right)^{1/2}
   =\sqrt{\Ro_{f_1 + \bar m_1} - \Ro_{f_1}}
   \sqrt{\Ro_N} \,.
 \end{split}
\end{equation}
Let us recall from \eqref{aimP} that $\sft_i = \sft_i^{N,M}$ satisfies
$\Ro_{\sft_i} \sim \frac{i}{M} \Ro_N$ as $N\to\infty$ (for fixed $M\in\N$).
It follows that if $j < i$, then $\sft_j = o(\sft_i)$ as $N\to\infty$.\footnote{If
$\sft_j \in [\epsilon \, \sft_i,
\sft_i]$ for $\epsilon > 0$, the slowly varying property of
$\Ro_N$ would yield $\Ro_{\sft_j} \sim \Ro_{\sft_i}$, contradicting
\eqref{aimP}.} Since $\bar m_1 = |\bi| \sft_{i_1-2}$ by \eqref{eq:constra}
while $f_1 = a_1 \ge \sft_{i_1-1}$, it follows that $\bar m_1 = o(f_1)$,
and hence $\Ro_{f_1 + \bar m_1} \sim \Ro_{f_1}$. This implies that
the r.h.s.\ of \eqref{Sn2-Cauchybis} equals
$\sqrt{o(1) \, \Ro_{f_1}} \sqrt{\Ro_N} = o(1) \, \Ro_N$
as $N\to\infty$.

We can now  sum over the remaining variables $(f_\ell, h_\ell)$
for $\ell = u-1, u-2, \ldots, 1$ as we did before, with each sum bounded by $\Ro_N$ as shown in
\eqref{Sn2-Cauchy}, which gives
\begin{equation}\label{case3b}
\mathcal{\hat{S}}_N \leq
  \hat\Ci \,
 \left(\frac{M}{\Ro_{N}}\right)^{2|\bi|}
 \, \Ro_{N}^p \, o(1)  \,,
\end{equation}
where $\hat\Ci$ is again a combinatorial factor independent of $N$.
Since $p= 2|\bi|$, for any fixed $M\in\N$,
the r.h.s.\ of \eqref{case3b} vanishes as $N\to\infty$.
\qed

\medskip

To complete the proof of \eqref{eq:tosho}, it only remains to show that \eqref{eq:tosho} holds if the joint sums in \eqref{sumTheta} and \eqref{prodQ} are restricted such that $|(\ba, \bx)\cup (\bb, \by) \cup (\bc, \bz)\cup(\bd, \bw)|=2|\bi|$ and $[\ba]\cup[\bb]\cup[\bc]\cup[\bd]$ contains two connected components.

\medskip
\noindent
{\bf Case 5.} {\em Two connected components.} In this case, $\ba, \bb, \bc, \bd$ must coincide two by two, i.e.,
\begin{equation} \label{eq:match}
	\ba = \bb \ \ \text{and} \ \ \bc = \bd \,, \qquad
	\text{or} \qquad \ba = \bc \ \ \text{and} \ \ \bb = \bd \,,  \qquad
	\text{or} \qquad \ba = \bd \ \ \text{and} \ \ \bb = \bc \,.
\end{equation}
This extends further to $(\ba, \bx)$, $(\bb, \by)$, $(\bc, \bz)$, $(\bd, \bw)$. By symmetry, each of the three cases gives the same contribution,
which leads to the factor $3$ in the r.h.s.\ of \eqref{eq:tosho}.
We can thus focus on the case $(\ba, \bx) = (\bb, \by)\neq (\bc, \bz) = (\bd, \bw)$.

Restricting the sums in \eqref{sumTheta} and \eqref{prodQ} to
$(\ba,\bx) = (\bb,\by) \ne (\bc,\bz) = (\bd, \bw)$, we obtain
\begin{equation} \label{eq:quala}
\left(\frac{M}{\Ro_{N}}\right)^{2|\bi|}
	\sumtwo{\ba, \bc \prec \bi, \ \bx, \bz \in (\Z^d)^{|\bi|}}{[\ba] \cap [\bc]=\emptyset}
	q_{\ba}(\bx)^2 \, q_{\bc}(\bz)^2 \,.
\end{equation}
Note that if we ignore the restriction
$[\ba] \cap [\bc]=\emptyset$, then the sum factorizes and we obtain
\begin{equation} \label{eq:quala2}
	\left(\frac{M}{\Ro_{N}}\right)^{2|\bi|}  \left(\sum_{\ba \prec \bi, \ \bx \in \Z^d}
	q_{\ba}(\bx)^2\right)
	\left(\sum_{\bc \prec \bi, \ \bz \in \Z^d} q_{\bc}(\bz)^2\right)
	\xrightarrow[\,N\to\infty\,]{} 1
\end{equation}
by the same variance calculation as in \eqref{eq:varcom}. This proves \eqref{eq:tosho}, because the
terms in \eqref{eq:quala} with $[\ba] \cap [\bc]\neq \emptyset$ are negligible by the same bounds as
in {\bf Case 4} (cf.~\eqref{Sn2-Cauchybis}), where $[\ba]\cup[\bb]\cup[\bc]\cup[\bd]$ contains
one connected component.
\end{proof}

\section{Proof of Theorem \ref{thm:subcritical}}
\label{sec:CG}

In this section we will first prove the approximation steps {\bf (A1)--(A4)} outlined in
Section~\ref{ss:proofsteps}, and then conclude the proof of Theorem~\ref{thm:subcritical}.

Recall that the first step {\bf (A1)} enlarges the range of summation for $Z_N^{(k)}$ in \eqref{eq:Zpoly0} to $1\leq n_1, n_2-n_1, \ldots, n_k-n_{k-1}\leq N$.
\begin{lemma}[Approximation (A1)]\label{lem:appr1}
For each $k\in\N$, let $Z_N^{(k)}$ be as in \eqref{eq:Zpoly0}, and let
\begin{equation}\label{hatZNk}
\widehat Z_N^{(k)} := \frac{1}{\Ro_N^{k/2}}
	\sumtwo{1\leq n_1, n_2-n_1, \ldots, n_k-n_{k-1}\leq N}{z_1, z_2, \ldots, z_k \in \bbZ^d}
	\, \prod_{j=1}^k q_{n_j-n_{j-1}}(z_j-z_{j-1})
	\, \prod_{i=1}^k \eta_{(n_i,z_i)}.
\end{equation}
Then
\begin{equation}\label{ZhatZdiff}
\lim_{N\to\infty} \, \bbE \big[ ( \widehat Z_N^{(k)}-Z_N^{(k)})^2\big] = 0.
\end{equation}
\end{lemma}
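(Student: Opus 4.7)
My plan is to reduce the claim to a direct $L^2$ computation exploiting the orthogonality of the multilinear expansion, together with the slow variation of $\Ro_N$.

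First I would observe that, by the definitions \eqref{eq:Zpoly0} and \eqref{hatZNk}, the difference $\widehat Z_N^{(k)} - Z_N^{(k)}$ is a sum with the same summand but over the index set where the partial sums fail to stay in $\{1,\ldots,N\}$: if we let $m_j := n_j - n_{j-1}$ (with $n_0=0$), then $Z_N^{(k)}$ has the constraint $m_j \ge 1$ together with $m_1 + \cdots + m_k \le N$, whereas $\widehat Z_N^{(k)}$ merely requires $1 \le m_j \le N$. Thus $\widehat Z_N^{(k)} - Z_N^{(k)}$ is supported on configurations with $1 \le m_j \le N$ for all $j$ but $m_1 + \cdots + m_k > N$; the space variables $\bx = (z_1,\ldots,z_k) \in (\bbZ^d)^k$ remain unconstrained, and the indices $(n_j,z_j)$ are all distinct since $n_1 < n_2 < \cdots < n_k$.

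Next, since the $\eta_{(n,x)}$ are independent with mean zero and unit variance (I drop the negligible $O(\beta_N)$ correction, see the discussion after \eqref{eq:qeta}), I would expand the square and use orthogonality of distinct monomials in the $\eta$'s. Changing variables $w_j := z_j - z_{j-1}$, the $\bx$ sum factorizes into $\prod_j \sum_{w_j \in \bbZ^d} q_{m_j}(w_j)^2 = \prod_j \|q_{m_j}\|^2$, so
\begin{equation*}
\bbE\bigl[(\widehat Z_N^{(k)} - Z_N^{(k)})^2\bigr]
\;=\; \frac{1}{\Ro_N^{\,k}} \sumtwo{1 \le m_1,\ldots,m_k \le N}{m_1+\cdots+m_k > N} \, \prod_{j=1}^{k} \|q_{m_j}\|^2.
\end{equation*}

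Finally, to bound this, I would note that $m_1 + \cdots + m_k > N$ forces $\max_j m_j > N/k$, so a union bound over which $m_j$ is large gives
\begin{equation*}
\sumtwo{1 \le m_1,\ldots,m_k \le N}{m_1+\cdots+m_k > N} \prod_{j=1}^{k} \|q_{m_j}\|^2
\;\le\; k\,(\Ro_N - \Ro_{\lfloor N/k\rfloor})\,\Ro_N^{\,k-1}.
\end{equation*}
Dividing by $\Ro_N^{\,k}$ yields the bound $k\bigl(1 - \Ro_{\lfloor N/k\rfloor}/\Ro_N\bigr)$, which tends to zero as $N\to\infty$ because $\Ro_N$ is slowly varying (so $\Ro_{\lfloor N/k\rfloor}/\Ro_N \to 1$). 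This proves \eqref{ZhatZdiff}. There is no serious obstacle; the whole argument hinges on the slow variation of $\Ro_N$, which is guaranteed by the marginal overlap hypothesis and was already recorded in \eqref{eq:RN}.
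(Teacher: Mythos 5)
Your proof is correct and follows essentially the same route as the paper: both arguments reduce the claim to an orthogonality ($L^2$) computation for the chaos terms $Q_{\bn}$ and then invoke the slow variation of $\Ro_N$ via the restriction to increments of size at most $N/k$. The only cosmetic difference is that you bound the complementary index set $\{m_1+\cdots+m_k>N\}$ directly by a union bound, giving $k\bigl(1-\Ro_{\lfloor N/k\rfloor}/\Ro_N\bigr)$, whereas the paper writes the same quantity as $\bbE[(\widehat Z_N^{(k)})^2]-\bbE[(Z_N^{(k)})^2]$ and bounds it by $1-(\Ro_{N/k}/\Ro_N)^k$; these are equivalent estimates.
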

\begin{proof}
Recall from \eqref{eq:bq0} that for $\bn=(n_1, \ldots, n_k)\in \N^k_\uparrow :=\{\bn\in \N^k: n_1<n_2<\cdots <n_k\}$,
\begin{equation}\label{Qbn2}
	Q_{\bn} := \sum_{\bx \in (\Z^d)^{|\bn|}} \,
	\prod_{j=1}^{|\bn|} q_{n_j - n_{j-1}} (x_j - x_{j-1}) \,
	\eta_{(n_j,x_j)}
	\qquad \text{(with $n_0 = x_0 = 0$)}\,.
\end{equation}
We can then write the difference
 \begin{eqnarray*}
\widehat Z_N^{(k)}-Z_N^{(k)}
= \frac{1}{\Ro_N^{k/2}} \sum_{\bn\in\bbN^k_\uparrow}   \big( \ind_{1\leq n_1-n_0, \ldots, n_k-n_{k-1}\leq N} - \ind_{0<n_1<\cdots< n_k \leq N} \big) Q_{\bn}.
\end{eqnarray*}
Since $\E[Q_{\bn}Q_{\bn'}] = \ind_{\{\bn=\bn'\}} \|q_{\bn}\|^2$ for $\bn, \bn'\in\bbN^k_\uparrow$ by \eqref{eq:Qprop}, we observe that
$$
\bbE \big[ ( \widehat Z_N^{(k)}-Z_N^{(k)})^2\big] = \bbE \big[ ( \widehat Z_N^{(k)})^2\big] - \bbE \big[(Z_N^{(k)})^2\big] -2
\bbE\big[(\widehat Z_N^{(k)}-Z_N^{(k))})Z_N^{(k)}\big]= \bbE \big[ ( \widehat Z_N^{(k)})^2\big] - \bbE \big[(Z_N^{(k)})^2\big] .
$$
On the other hand, recalling \eqref{eq:replica},
\begin{equation}\label{eq:norma1}
\begin{split}
	\bbE\big[(Z_N^{(k)})^2\big] \leq \bbE\big[(\widehat Z_N^{(k)})^2\big]
	& = \frac{1}{\Ro_N^{k}} \sumtwo{1\leq
	n_1, n_2-n_{1}, \ldots, n_k - n_{k-1} \leq N}{z_1, z_2, \ldots, z_k \in \bbZ^d}
	\, \prod_{j=1}^k q_{n_j-n_{j-1}}(z_j-z_{j-1})^2 \\
	& = \frac{1}{\Ro_N^{k}} \Bigg( \sumtwo{1 \le n \le N}{z \in \Z^d} q_n(z)^2
	\Bigg)^k = 1.
\end{split}
\end{equation}
To prove \eqref{ZhatZdiff}, it then suffices
to show that $\liminf_{N\to\infty} \bbE\big[(Z_N^{(k)})^2\big]\geq 1$, which holds since
\begin{eqnarray*}
\bbE\big[(Z_N^{(k)})^2\big]
&=& \frac{1}{\Ro_N^k} \sum_{\bn \in \bbN^k_\uparrow}  \ind_{0<n_1<\cdots< n_k \leq N} \,\|q_{\bn}\|^2  \\
&\geq& \frac{1}{\Ro_N^k} \sum_{\bn\in \bbN^k_\uparrow} \ind_{1\leq n_1-n_0, \ldots,
n_k-n_{n-1} \leq \frac{N}{k}} \|q_{\bn}\|^2 = \frac{\Ro_{N/k}^k}{\Ro_N^k},
\end{eqnarray*}
which tends to $1$ as $N\to\infty$ by the assumption that $\Ro_N$ is slowly varying in $N$.
\end{proof}
\medskip

The approximation step {\bf (A2)} in Section \ref{ss:proofsteps} bounds the contributions of near-diagonal terms when the summations in $\widehat Z_N^{(k)}$ in \eqref{hatZNk} are divided into blocks.

\begin{lemma}[Approximation (A2)]\label{lem:appr2} Recall from \eqref{genk} the definition of the block variables
\begin{equation}\label{ThetaNMi}
\Theta^{N;M}_{\bi} :=
	\left(\frac{M}{\Ro_N}\right)^{k/2} \!\!\!\!
	\sum_{n_1 \in I_{i_1}, \, \ldots, \, n_k - n_{k-1} \in I_{i_k}} Q_{\bn}, \quad \quad \bi=(i_1, \ldots, i_k)\in \{1, \ldots, M\}^k,
\end{equation}
with $Q_{\bn}$ as in \eqref{Qbn2}, and $I_i= \big(\sft_{i-1}, \sft_i \big]$ defined as in \eqref{aimP} such that $\Ro_{\sft_i} \sim  \tfrac{i}{M}\,\Ro_{N}$. Then
\begin{equation}
\lim_{M\to\infty} \limsup_{N\to\infty}
\bbE\bigg[\bigg(\sum_{\bi \in \{1, \ldots, M\}^k \setminus \{1, \ldots, M\}^k_\sharp}
\frac{1}{M^{\frac{k}{2}}} \,\Theta^{N;M}_{\bi}\bigg)^2\bigg] = 0,
\end{equation}
where $\{1,...,M\}^k_\sharp$ was defined in \eqref{eq:DM0}, which consists of $\bi$ with $|i_j -i_{j'}|\geq 2$ for all $j\neq j'$.
\end{lemma}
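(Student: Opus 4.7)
The plan is to exploit \emph{exact pairwise orthogonality} of the block variables $\Theta^{N;M}_{\bi}$, so that the $L^2$ norm of the sum reduces to a sum of variances, and then to show that only a vanishing fraction of indices contribute. The key observation is that for $\bi \ne \bj$ in $\{1,\ldots,M\}^k$, no sequence $\bn \prec \bi$ can equal a sequence $\bn' \prec \bj$: at any coordinate $j$ where $i_j \ne j_j$, the increments $n_j - n_{j-1}$ and $n'_j - n'_{j-1}$ belong to the disjoint intervals $I_{i_j}$ and $I_{j_j}$ (by \eqref{aimP}). Combined with the orthogonality identity $\bbE[Q_{\bn} Q_{\bn'}] = \|q_{\bn}\|^2 \ind_{\{\bn = \bn'\}}$ from \eqref{eq:Qprop}, this gives $\bbE[\Theta^{N;M}_{\bi}\,\Theta^{N;M}_{\bj}] = 0$ for all $\bi \ne \bj$, so that, writing $\cB_M^k := \{1,\ldots,M\}^k \setminus \{1,\ldots,M\}^k_\sharp$,
\begin{equation*}
\bbE\bigg[\bigg(\sum_{\bi \in \cB_M^k} \frac{\Theta^{N;M}_{\bi}}{M^{k/2}}\bigg)^2\bigg] = \frac{1}{M^k} \sum_{\bi \in \cB_M^k} \bbE\big[(\Theta^{N;M}_{\bi})^2\big].
\end{equation*}

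Next I would invoke the variance computation already carried out in \eqref{eq:varcom}: for each fixed $\bi \in \{1,\ldots,M\}^k$ and each fixed $M$, $\bbE[(\Theta^{N;M}_{\bi})^2] \to 1$ as $N \to \infty$. Since there are only $M^k$ admissible $\bi$ for fixed $M$, this convergence is automatically uniform in $\bi$, so $\sup_{\bi} \bbE[(\Theta^{N;M}_{\bi})^2] \le 2$ for all $N$ sufficiently large. The problem is thereby reduced to a combinatorial count of $|\cB_M^k|$, and an elementary union bound suffices: $\bi \in \cB_M^k$ iff some pair $j < j'$ satisfies $|i_j - i_{j'}| \le 1$, and for each such pair the number of offending $\bi$ is at most $M \cdot 3 \cdot M^{k-2}$, giving $|\cB_M^k| \le 3 \binom{k}{2} M^{k-1}$.

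Putting the pieces together yields an upper bound of order $1/M$ for $N$ large, which vanishes as $M \to \infty$ after $N \to \infty$, proving the claim. The only point requiring attention --- and it is really bookkeeping rather than a true obstacle --- is recognizing that the partition into the blocks $I_1, \ldots, I_M$ produces \emph{genuine} (not merely asymptotic) orthogonality between distinct $\Theta^{N;M}_{\bi}$, so that no subtle estimate on products $Q_{\bn} Q_{\bn'}$ or on near-diagonal contributions is needed at this stage. Everything else reduces to the precomputed variance \eqref{eq:varcom} and an elementary counting argument.
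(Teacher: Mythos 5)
Your proposal is correct and follows essentially the same route as the paper: the paper likewise uses the orthogonality $\bbE[Q_{\bn}Q_{\bn'}]=\|q_{\bn}\|^2\ind_{\{\bn=\bn'\}}$ to reduce the second moment to the diagonal sum $\frac{1}{\Ro_N^k}\sum_{\bi\in\cB_M^k}\sum_{\bn\prec\bi}\|q_{\bn}\|^2$, bounds each inner sum by $(\Ro_N/M)^k(1+o(1))$ via \eqref{eq:Rsum} (your appeal to \eqref{eq:varcom}), and concludes by the count $|\cB_M^k|=O_k(M^{k-1})$. The only cosmetic difference is that you phrase the first step as exact orthogonality of the $\Theta^{N;M}_{\bi}$ themselves, which is a valid (and correctly justified) repackaging of the same fact.
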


\begin{proof}
Denote $\{1,...,M \}^k_{*}:= \{1,...,M \}^k\backslash \{1,...,M \}^k_\sharp$. Note that
\begin{align*}
\bbE\bigg[\bigg(\sum_{\bi \in \{1,...,M \}^k_{*}}
\frac{1}{M^{\frac{k}{2}}} \,\Theta^{N;M}_{\bi}\bigg)^2\bigg]
= \frac{1}{\Ro^k_N} \sum_{\bi\in \{1,...,M \}^k_{*} }\sum_{n_1 \in I_{i_1}, \, \ldots, \, n_k - n_{k-1} \in I_{i_k}} \|q_{\bn}\|^2.
\end{align*}
Recall from \eqref{eq:Rsum} that $\sum_{m\in I_i}\|q_m\|^2\sim \Ro_N/M$ as $N\to\infty$, while $\|q_{\bn}\|^2 = \prod_{j=1}^{|\bn|} \| q_{n_j - n_{j-1}} \|^2$,
we can therefore sum  $n_k,n_{k-1},...,n_1$ successively to obtain
\begin{align*}
\limsup_{N\to\infty} \bbE\bigg[\bigg(\sum_{\bi \in \{1,...,M \}^k_{*}}
\frac{1}{M^{\frac{k}{2}}} \,\Theta^{N;M}_{\bi}\bigg)^2\bigg] &\leq
\sum_{\bi \in \{1, \ldots, M\}^k_*}\, \frac{1}{M^{k}},
\end{align*}
which tends to $0$ as $M\to\infty$, since the constraint $\bi \in  \{1,...,M \}^k  \setminus \{1,...,M \}^k_{\sharp}$ reduces the number of free indices in $\bi=(i_1, \ldots, i_k)$.
\end{proof}
\medskip

The approximation step {\bf (A3)} concerns the asymptotic factorization of $\Theta^{N;M}_{\bi}$ into a product of $\Theta^{N;M}_{\bi^{(j)}}$, indexed by dominated sequences $\bi^{(1)}, \ldots, \bi^{(\mathfrak{m})}$ forming $\bi$ (cf.~\eqref{1domseq}). Recall that each $\bi=(i_1, \ldots, i_k)\in \{1, \ldots, M\}^k_\sharp$ can be divided into $\mathfrak{m}=\mathfrak{m}(\bi)$ consecutive dominated sequences $\bi^{(1)}:=(i_1, \ldots, i_{\ell_2-1})$, $\bi^{(2)}:=(i_{\ell_2}, \ldots, i_{\ell_3-1})$, \ldots, $\bi^{(\mathfrak{m})}:=(i_{\ell_{\mathfrak{m}}}, \ldots, i_k)$,  where $i_{ \ell_1}=i_1<i_{\ell_2}< \cdots< i_{\ell_{\mathfrak{m}}}$ are the successive running maxima of $(i_1, \ldots, i_k)$.

\begin{lemma}[Approximation (A3)]\label{lem:appr3}
For each $\bi=(i_1, \ldots, i_k)\in \{1...,M\}^k_{\sharp}$, we have
\begin{equation}\label{eq:appr3}
\lim_{N\to\infty} \bbE\Big[\big(\Theta^{N;M}_{\bi} - \Theta^{N;M}_{\bi^{(1)}} \Theta^{N;M}_{\bi^{(2)}}\cdots \Theta^{N;M}_{\bi^{(\mathfrak{m}(\bi))}}\big)^2\Big] = 0,
\end{equation}
where
$ (\bi^{(1)},...,\bi^{(\mathfrak{m}(\bi))})$ is the decomposition of $\bi$ into dominated sequences.
\end{lemma}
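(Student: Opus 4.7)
The plan is to expand the $L^2$ norm of the difference as
\begin{equation*}
	\bbE\big[\big(\Theta^{N;M}_\bi-\textstyle\prod_j\Theta^{N;M}_{\bi^{(j)}}\big)^2\big]
	=\bbE\big[(\Theta^{N;M}_\bi)^2\big]+\bbE\big[(\textstyle\prod_j\Theta^{N;M}_{\bi^{(j)}})^2\big]-2\,\bbE\big[\Theta^{N;M}_\bi\cdot\textstyle\prod_j\Theta^{N;M}_{\bi^{(j)}}\big],
\end{equation*}
and to show that each of the three summands converges to $1$ as $N\to\infty$. The first term converges to $1$ by the variance computation \eqref{eq:varcom}. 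For the second, since $\bi\in\{1,\ldots,M\}^k_\sharp$ the leading indices $i_{\ell_1}<\cdots<i_{\ell_{\mathfrak{m}}}$ of the dominated blocks differ by at least $2$, and every time coordinate appearing in $\Theta^{N;M}_{\bi^{(j)}}$ lies in the window $[\sft_{i_{\ell_j}-1},\sft_{i_{\ell_j}}+k\sft_{i_{\ell_j}-2}]$. The slow variation of $\Ro_N$ implies $\sft_{i-1}/\sft_i\to 0$ as $N\to\infty$, so these time windows become disjoint for $N$ large. Hence the factors $\Theta^{N;M}_{\bi^{(j)}}$ depend on disjoint collections of disorder variables and are independent, which together with \eqref{eq:varcom} applied blockwise gives $\bbE[(\prod_j\Theta^{N;M}_{\bi^{(j)}})^2]=\prod_j\bbE[(\Theta^{N;M}_{\bi^{(j)}})^2]\to 1$.

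The main work is the cross term. Expanding the product and matching the $\eta$ factors block-wise (which is the only pairing compatible with the time constraints, since the $(n,x)$-indices within each product are distinct and the approximate time supports of distinct blocks are disjoint), the cross term reduces to $(M/\Ro_N)^{k}\sum_{\bn,\bx}\ind_{A}\prod_{\alpha}f_\alpha$, where the set $A$ imposes both $\bn\prec\bi$ and the extra constraint $n_{\ell_j}\in I_{i_{\ell_j}}$ for $j\geq 2$, and $f_\alpha=q_{n_\alpha-n_{\alpha-1}}(x_\alpha-x_{\alpha-1})^2$ for $\alpha\not\in\{\ell_j:j\geq 2\}$ while $f_{\ell_j}=q_{n_{\ell_j}-n_{\ell_j-1}}(x_{\ell_j}-x_{\ell_j-1})\cdot q_{n_{\ell_j}}(x_{\ell_j})$ at the block joints. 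This differs from $\bbE[(\Theta^{N;M}_\bi)^2]$ only at the joints. There, the cumulative offset $n_{\ell_j-1}$ is a sum of at most $k$ increments each in some $I_{i_r}$ with $i_r\leq i_{\ell_j}-2$, so $n_{\ell_j-1}=O(\sft_{i_{\ell_j}-2})=o(\sft_{i_{\ell_j}-1})$, and after summing the previous block's spatial variables the relevant $|x_{\ell_j-1}|$ is of order $\phi(\sft_{i_{\ell_j}-2})=o(\phi(\sft_{i_{\ell_j}-1}))$. Applying the Local Limit Theorem of Hypothesis~\ref{hyp} then yields $q_{n_{\ell_j}-n_{\ell_j-1}}(x_{\ell_j}-x_{\ell_j-1})=q_{n_{\ell_j}}(x_{\ell_j}-x_{\ell_j-1})(1+o(1))$ uniformly on the bulk, and performing the sum over $x_{\ell_j}$ and then $n_{\ell_j}\in I_{i_{\ell_j}}$ reproduces the factor $\Ro_{\sft_{i_{\ell_j}}}-\Ro_{\sft_{i_{\ell_j}-1}}\sim\Ro_N/M$ that already appears in $\bbE[(\Theta^{N;M}_\bi)^2]$. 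Iterating over all joints and non-joints shows the cross term tends to $1$.

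The main technical difficulty is upgrading the pointwise LLT to a bound uniform under the $\sum_{\bn,\bx}$: one must exclude ``bad'' regions where $|x_{\ell_j-1}|$ or $n_{\ell_j-1}$ is not negligible compared to $\phi(n_{\ell_j})$ or $n_{\ell_j}$, and handle the boundary mismatch between the constraints $n_{\ell_j}-n_{\ell_j-1}\in I_{i_{\ell_j}}$ and $n_{\ell_j}\in I_{i_{\ell_j}}$. Both are controlled by Cauchy--Schwarz bounds against $\sum_{n,y}q_n(y)^2=\Ro_N$, exploiting the strict scale separation $\sft_{i-1}/\sft_i\to 0$ and the sharp condition on $\bi$.
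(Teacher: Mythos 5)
Your proposal is correct and follows essentially the same route as the paper's proof: the identity $\bbE[(X-Y)^2]=\bbE[X^2]+\bbE[Y^2]-2\,\bbE[XY]$ makes your three-term computation equivalent to the paper's direct $L^2$ comparison, and the substantive ingredients coincide — the variance computation \eqref{eq:varcom}, block independence from the scale separation $\sft_{i-1}=o(\sft_i)$, the observation that only the block-wise (order-preserving) pairing survives in the cross moment, and the local limit theorem at the joints using $n_{\ell_j-1}=O(\sft_{i_{\ell_j}-2})=o(\sft_{i_{\ell_j}-1})$ and $|x_{\ell_j-1}|=o(\phi(\sft_{i_{\ell_j}-1}))$, with Cauchy--Schwarz against $\Ro_N$ controlling the bad regions and boundary mismatches, exactly as in the paper's cutoff argument \eqref{nellbd}--\eqref{xellbd}.
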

\begin{proof}
We first prove \eqref{eq:appr3} for $\mathfrak{m}(\bi)=2$, with $\ell_1=1$ and $\ell_2$ denoting the indices of the two running maxima of $\bi$. Recall that
\begin{equation}\label{Thetagain}
\Theta^{N;M}_{i_1,...,i_k} =
	\left(\frac{M}{\Ro_N}\right)^{k/2} \!\!\!\!
	\sumtwo{n_1 \in I_{i_1}, \, \ldots, \, n_k - n_{k-1} \in I_{i_k}}
	{x_1, \ldots, x_k \in \Z^d}
	\, \prod_{j=1}^k q_{n_j-n_{j-1}}(x_j-x_{j-1})
	\, \prod_{i=1}^k \eta_{(n_i,x_i)}.
\end{equation}
Note that if we replace $q_{n_{\ell_2}-n_{\ell_2-1}}(x_{\ell_2}-x_{\ell_2-1})$ by $q_{n_{\ell_2}}(x_{\ell_2})$ and replace the range of summation $n_{\ell_2}-n_{\ell_2-1}\in I_{i_{\ell_2}}$ by $n_{\ell_2}\in I_{i_{\ell_2}}$, then the above expression for $\Theta^{N;M}_{\bi}$ becomes that for  $\Theta^{N;M}_{\bi^{(1)}} \Theta^{N;M}_{\bi^{(2)}}$. We will show that these replacements are justified because using that $\ell_2$ is a running maximum of $\bi$,
one has $n_{\ell_2} \gg n_{\ell_2-1}$ and the local limit theorem of Hypothesis \ref{hypoth} can then be applied to replace $q_{n_{\ell_2}-n_{\ell_2-1}}(x_{\ell_2}-x_{\ell_2-1})$ by $q_{n_{\ell_2}}(x_{\ell_2})$.

First note that the summands in \eqref{Thetagain} for $\Theta^{N;M}_{\bi}$  are all orthogonal, and the dominant $L^2$ contribution comes from $x_1,\ldots, x_k$ with $|x_j-x_{j-1}|$ of the order
$$
\phi(n_j -n_{j-1}):= ((n_j-n_{j-1})L(n_j-n_{j-1})^2)^{1/d}
$$
for each $1\leq j\leq k$. Indeed, by the local limit theorem of Hypothesis \ref{hypoth} and a Riemann sum approximation,
$$
\frac{\bbE\Big[ \Big(\sum_{|x| \leq K \phi(n)} q_n(x) \eta_{(n,x)}\Big)^2\Big]}{\bbE\Big[ \Big(\sum_{x} q_n(x) \eta_{(n,x)}\Big)^2\Big] }  = \frac{\sum_{|x| \leq K \phi(n)} q_n(x)^2}{\sum_{x} q_n(x)^2}
\asto{n} \frac{\int_{|x|\leq K} g^2(x) \dd x}{\int g^2(x) \dd x} \asto{K} 1.
$$
Therefore by choosing $K$ large, we can approximate $\Theta^{N;M}_{\bi}$ arbitrarily closely in $L^2$ by
$$
\widetilde \Theta^{N;M}_{\bi} =
	\left(\frac{M}{\Ro_N}\right)^{k/2} \!\!\!\!
	\sumtwo{n_1 \in I_{i_1}, \, \ldots, \, n_k - n_{k-1} \in I_{i_k}}
	{|x_1|\leq K\phi(n_1), \ldots, |x_k-x_{k-1}|\leq K\phi(n_k-n_{k-1})}
	\, \prod_{j=1}^k q_{n_j-n_{j-1}}(x_j-x_{j-1})
	\, \prod_{i=1}^k \eta_{(n_i,x_i)}.
$$
Similarly we can approximate $\Theta^{N;M}_{\bi^{(1)}} \Theta^{N;M}_{\bi^{(2)}}$ arbitrarily closely in $L^2$ by $\widetilde \Theta^{N;M}_{\bi^{(1)}} \widetilde \Theta^{N;M}_{\bi^{(2)}}$, which differs from $\widetilde \Theta^{N;M}_{\bi}$ in:
\begin{itemize}
\item the factor $q_{n_{\ell_2}}(x_{\ell_2})$ instead of
$q_{n_{\ell_2}-n_{\ell_2-1}}(x_{\ell_2}-x_{\ell_2-1})$;

\item the range of summation
$n_{\ell_2}\in I_{i_{\ell_2}}$ and $|x_{\ell_2}|\leq K\phi(n_{\ell_2})$,
instead of $n_{\ell_2}-n_{\ell_2-1}\in I_{i_{\ell_2}}$ and
$|x_{\ell_2}-x_{\ell_2-1}|\leq K\phi(n_{\ell_2}-n_{\ell_2-1})$.
\end{itemize}

We now show that these differences are negligible in $L^2$ contributions. By assumption,
$$
n_j-n_{j-1} \in I_{i_j}=\big(\sft_{i_{j}-1}, \sft_{i_{j}} \big] \quad \mbox{for all }  1\leq j\leq \ell_2-1,
$$
where $\sft_a$ is chosen with $\Ro_{\sft_{a}} \sim  \tfrac{a}{M}\,\Ro_{N}$. Since $\Ro_N$ is slowly varying and divergent, we have $\sft_1\ll \sft_2\ll \sft_3\ll \cdots$ as $N\to\infty$. In particular, we have the uniform bound
\begin{equation}\label{nellbd}
n_{\ell_2-1} = \sum_{j=1}^{\ell_2-1} (n_j-n_{j-1}) \leq \sum_{j=1}^{\ell_2-1} \sft_{i_j}
= O(t_{i_{1}}) = o(\sft_{i_{\ell_2}-1}),
\end{equation}
where the last bound holds because the assumption $\bi\in \{1, \ldots, M\}^k_\sharp$ and
$\ell_2$ being a running maximum ensures that $i_j
\le i_{1} <i_{\ell_2}-1$ for all $1\leq j\leq \ell_2-1$. Therefore when we switch from
the range of summation in $\widetilde \Theta^{N;M}_{\bi}$ from
$n_{\ell_2}\in n_{\ell_2-1}+(\sft_{i_{\ell_2}-1}, \sft_{i_{\ell_2}}]$  to
$n_{\ell_2}\in (\sft_{i_{\ell_2}-1}, \sft_{i_{\ell_2}}]$, the difference is negligible in $L^2$
as $N\to\infty$.

Similarly, we have the uniform bound
\begin{equation}\label{xellbd}
| x_{\ell_2-1}| \leq \sum_{j=1}^{\ell_2-1} |x_j-x_{j-1}| \leq K \sum_{j=1}^{\ell_2-1} \phi(n_j-n_{j-1}) \leq K \sum_{j=1}^{\ell_2-1}
\phi(\sft_{i_j})  \ll \phi(\sft_{i_{\ell_2}-1}),
\end{equation}
and when we switch the range of summation in $\widetilde \Theta^{N;M}_{\bi}$ from $|x_{\ell_2}- x_{\ell_2-1}| \leq K\phi(n_{\ell_2}-n_{\ell_2-1})$ to $|x_{\ell_2}|\leq K\phi(n_{\ell_2})$, the difference is again negligible in $L^2$ as $N\to\infty$
(recall that by construction $I_{i_{\ell_2}} \ni n_{\ell_2} - n_{\ell_2-1}
\ge \sft_{i_{\ell_2}-1}$).

Having justified the switch of the range of summation for $n_{\ell_2}$ and $x_{\ell_2}$ in $\widetilde \Theta^{N;M}_{\bi}$ to $n_{\ell_2}\in I_{i_{\ell_2}}$ and $|x_{\ell_2}|\leq K\phi(n_{\ell_2})$, we note finally that switching $q_{n_{\ell_2}-n_{\ell_2-1}}(x_{\ell_2}-x_{\ell_2-1})$ to $q_{n_{\ell_2}}(x_{\ell_2})$ also leads to a negligible difference in $L^2$ as $N\to\infty$, because uniformly in $x_{\ell_2-1}$ and $n_{\ell_2-1}$ with bounds as in \eqref{nellbd} and \eqref{xellbd}, and uniformly in $n_{\ell_2}\in I_{i_{\ell_2}}$ and $|x_{\ell_2}|\leq K\phi(n_{\ell_2})$, we have
\begin{equation}
\Big|\frac{q_{n_{\ell_2}-n_{\ell_2-1}}(x_{\ell_2}-x_{\ell_2-1})}{q_{n_{\ell_2}}(x_{\ell_2})} -1\Big| \asto{N} 0,
\end{equation}
which follows readily from the local limit theorem for $q(\cdot)$ in Hypothesis \ref{hypoth}.

This completes the proof of \eqref{eq:appr3} when $\bi$ has two running maxima. In general when $\bi$ has $\mathfrak{m}$ running maxima, occurring at indices $\ell_1=1$, $\ell_2, \ldots, \ell_{\mathfrak{m}}$, the argument is the same: we just replace
$q_{n_{\ell_j}-n_{\ell_j-1}}(x_{\ell_j}-x_{\ell_j-1})$ by $q_{n_{\ell_j}}(x_{\ell_j})$ and replace the range of summation $n_{\ell_j}-n_{\ell_j-1}\in I_{i_{\ell_j}}$ by $n_{\ell_j}\in I_{i_{\ell_j}}$, one $j$ at a time.
\end{proof}
\medskip

As explained in Section~\ref{ss:proofsteps}, for a fixed $M\in\N$, which is the number of blocks $(I_i)_{1\leq i\leq M}$ that partition $[1,N]$ (cf. \eqref{I-partition}),
 the polymer partition function $Z^\omega_{N, \gb_N}$ (with $\gb_N=\hbeta/\sqrt{\Ro_N}$ for some $\hbeta<1$) is approximated in distribution in the $N\to\infty$ limit
by the random variable $\bs{Z}_\hbeta^{(M)}$ in \eqref{eq:cutofff}.
The last step {\bf (A4)} is to show that as $M\to\infty$, $\bs{Z}_\hbeta^{(M)}$ converges to the
log-normal random variable $\bs{Z}_\hbeta$ in Theorem \ref{thm:subcritical}.

\begin{lemma}[Step (A4)]\label{prop:limCG}
Let $(\zeta_{\bi})_{\bi \in D_M}$ be i.i.d.\ standard normal random variables indexed by finite dominated sequences in $D_M$ as defined in \eqref{eq:DM}. Let $\hbeta\in (0,1)$, and let
\begin{equation}\label{bsZhbeM}
	\bs{Z}_\hbeta^{(M)}
	:= 1 + \sum_{k=1}^{\infty} \  \sum_{\bi \in \{1, \ldots, M\}^k_\sharp} \
	\frac{\hbeta^k}{M^{\frac{k}{2}}} \prod_{l=1}^{\mathfrak{m}(\bi)} \zeta_{\bi^{(l)}} \,,
\end{equation}
where $(\bi^{(1)}, \ldots, \bi^{(\mathfrak{m}(\bi))})$ is the decomposition of $\bi$ into dominated sequences. Then
\begin{equation}\label{A4again}
\bs{Z}_\hbeta^{(M)}
\xrightarrow[M\to\infty]{d}
\bs{Z}_\hbeta = \exp\left( \int_0^1\frac{\hat\gb  }{\sqrt{1-\hat\gb^2\,t}} \dd W(t)-
\frac{1}{2} \int_0^1\frac{\hat\gb^2}{1-\hat\gb^2\,t} \dd t \right),
\end{equation}
where $W$ is a standard one dimensional Wiener process.
\end{lemma}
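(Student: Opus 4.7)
My plan is to rewrite $\bs{Z}_\hbeta^{(M)}$ as a product of independent Gaussians plus a small remainder, then take logarithms and conclude via convergence of covariances. For each $m\in\{1,\ldots,M\}$ set
\begin{equation*}
\xi_m^{(M)} \,:=\, \sum_{\bj\in D_M,\, \max\bj=m} \frac{\hbeta^{|\bj|}}{M^{|\bj|/2}}\,\zeta_{\bj}.
\end{equation*}
Since the index sets $\{\bj\in D_M:\max\bj=m\}$ are disjoint for distinct $m$, the variables $(\xi_m^{(M)})_{1\le m\le M}$ are independent, and each is a linear combination of i.i.d.\ standard normals, hence itself a centred Gaussian. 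Expanding $\prod_{m=1}^M (1+\xi_m^{(M)})$ enumerates tuples of dominated blocks with strictly increasing maxima, which matches the defining sum \eqref{bsZhbeM} of $\bs{Z}_\hbeta^{(M)}$ \emph{except} that the global pairwise-distance-$\ge 2$ constraint between entries of different blocks is dropped. A second-moment estimate in the spirit of Cases~4--5 in the proof of Proposition~\ref{thm:main1} (forcing any collision between entries of distinct blocks costs a factor $1/M$) will give
\begin{equation*}
\bs{Z}_\hbeta^{(M)} \,=\, \prod_{m=1}^M (1+\xi_m^{(M)}) + R_M, \qquad \bbE[R_M^2]\asto{M}0.
\end{equation*}

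A direct count shows $|\{\bj\in D_M:\max\bj=m,\,|\bj|=r\}|=(r-1)!\binom{m-r}{r-1}\sim m^{r-1}$ for fixed $r$ and $m\to\infty$ (the ordered $(r-1)$-tuple of non-maximal entries must lie in $\{1,\dots,m-2\}$ with pairwise gaps $\ge 2$). Summing the resulting geometric series in $r$ yields, uniformly in $m$,
\begin{equation*}
\bbvar\bigl(\xi_m^{(M)}\bigr) \,\sim\, \frac{\hbeta^2}{M\,\bigl(1-\hbeta^2(m/M)\bigr)} \qquad (M\to\infty),
\end{equation*}
so $\sum_{m\le sM}\bbvar(\xi_m^{(M)})\to \int_0^s \frac{\hbeta^2}{1-\hbeta^2 t}\,\dd t$ for every $s\in[0,1]$. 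Since partial sums of independent Gaussians are themselves jointly Gaussian, convergence of covariances gives the functional limit
\begin{equation*}
\Big(\sum_{m\le sM}\xi_m^{(M)}\Big)_{s\in[0,1]} \,\Rightarrow\, \Big(\int_0^s \frac{\hbeta}{\sqrt{1-\hbeta^2 t}}\,\dd W_t\Big)_{s\in[0,1]},
\end{equation*}
and in particular $S_M:=\sum_m \xi_m^{(M)}\Rightarrow \mathcal{X}\sim N(0,\sigma_{\hbeta}^2)$. The ``quadratic variation'' $Q_M:=\sum_m(\xi_m^{(M)})^2$ has mean converging to $\sigma_{\hbeta}^2$ and variance $\sum_m 2\,\bbvar(\xi_m^{(M)})^2=O(1/M)$, so $Q_M\to \sigma_{\hbeta}^2$ in $L^2$.

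To finish, I work on the event $\mathcal{E}_M:=\{\max_m|\xi_m^{(M)}|\le 1/2\}$; a union bound with Gaussian tails for the variance-$O(1/M)$ variables $\xi_m^{(M)}$ gives $\bbP(\mathcal{E}_M^c)\to 0$. On $\mathcal{E}_M$, Taylor expanding yields
\begin{equation*}
\log\prod_{m=1}^M(1+\xi_m^{(M)}) \,=\, S_M-\tfrac{1}{2}Q_M+E_M, \qquad |E_M|\le C\sum_m|\xi_m^{(M)}|^3,
\end{equation*}
and by Gaussianity $\bbE[|\xi_m^{(M)}|^3]\le C\,\bbvar(\xi_m^{(M)})^{3/2}=O(M^{-3/2})$, so $\bbE[\sum_m|\xi_m^{(M)}|^3]=O(M^{-1/2})\to 0$. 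Combining, $\log \bs{Z}_\hbeta^{(M)}\Rightarrow \mathcal{X}-\tfrac{1}{2}\sigma_{\hbeta}^2$, which after exponentiating is exactly the right-hand side of \eqref{A4again}.

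The main technical obstacle will be the combinatorial estimate for $R_M$: one must show that multi-block configurations with two entries of distinct blocks at distance $\le 1$ are $L^2$-negligible \emph{uniformly} in the block lengths (which can be arbitrarily large). Once this is in hand, the Gaussian CLT is trivial (the $\xi_m^{(M)}$ are already Gaussian, so only convergence of covariances is needed) and the Taylor control is immediate from the $O(1/M)$ individual variances.
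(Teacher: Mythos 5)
Your argument is correct, and it reaches the conclusion by a genuinely different route from the paper. You keep the Gaussian structure of the $\zeta$'s and reorganize $\bs{Z}_\hbeta^{(M)}$ as $\prod_{m=1}^M(1+\xi_m^{(M)})+R_M$ with independent Gaussian factors, then take logarithms and use $S_M-\tfrac12 Q_M+E_M$ with $Q_M\to\sigma_\hbeta^2$ in $L^2$; the paper instead resums the blocks according to the value and length of their running maxima, recognizes each aggregated factor $\Xi_{M,t}$ as a Gaussian of variance $\sim\frac{\hbeta^2}{M(1-\hbeta^2 t)}$, \emph{couples} it to an increment $\int_{t-1/M}^{t}\dd W_s$ of a single Brownian motion, and identifies the resulting sum as a discretized Wick exponential converging in $L^2$ to $:\exp\{\int_0^1\hbeta(1-\hbeta^2 t)^{-1/2}\dd W_t\}:$. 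Two remarks on the comparison. First, the step you flag as the main obstacle --- showing $\bbE[R_M^2]\to0$ --- does not require any collision-counting in the spirit of Cases 4--5 of Proposition~\ref{thm:main1}: the chaos summands of $\prod_m(1+\xi_m^{(M)})$ are products of \emph{distinct} $\zeta_{\bj}$'s (the blocks in any one term have distinct maxima), hence orthonormal, and they contain all summands of $\bs{Z}_\hbeta^{(M)}$; therefore $\bbE[R_M^2]=\prod_m(1+\bbvar(\xi_m^{(M)}))-\Vert\bs{Z}_\hbeta^{(M)}\Vert_2^2$, and both quantities converge to $e^{\sigma_\hbeta^2}=(1-\hbeta^2)^{-1}$ by your own variance asymptotics. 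This is exactly the mechanism the paper uses to compare $\bs{Z}_\hbeta^{(M)}$ with its relaxed version $\bs{\widehat Z}_\hbeta^{(M)}$, so this point is a one-line computation rather than an obstacle. Second, what the paper's coupling buys (and your log-Taylor argument does not) is convergence to the \emph{specific} functional of a fixed Brownian motion, in $L^2$ after the coupling; this stronger form is what gets recycled in the proof of Theorem~\ref{thm:short-cor}, where the same construction with correlated Brownian motions $W^{(j)}$ yields the joint multi-point limit. For the lemma as stated, which only asserts convergence in distribution, identifying the limit law as $\exp(\sigma_\hbeta W_1-\sigma_\hbeta^2/2)$ is enough, so your proof is complete once the orthogonality observation above is substituted for the collision estimate.
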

\begin{proof}
Grouping $\bi=(i_1, \ldots, i_k)$ according to the indices of its running maxima $\ell_1=1<\ell_2<\cdots \ell_m\leq k$,
as well as the values of the running maxima $1\leq i_{\ell_1}<\cdots <i_{\ell_m}\leq M$, which we denote by $\bi \sim (\vec \ell, i_{\vec \ell})$, we can write (with $\ell_{m+1}:=k+1$)
\begin{equation}\label{bsZhb2}
\bs{Z}_\hbeta^{(M)} = 1+
\sum_{k=1}^\infty\sum_{m=1}^k \frac{\hat\gb^k}{M^{\frac{k}{2}}}\,
\sumtwo{1=\ell_1<\cdots<\ell_m\leq k}{1\leq i_{\ell_1}<i_{\ell_2}<\cdots <i_{\ell_m}\leq M }
\sum_{\bi\in \{1,\ldots, M\}^k_\sharp \atop \bi \sim (\vec \ell, i_{\vec \ell})} \prod_{j=1}^m \zeta_{(i_{\ell_j},...,i_{\ell_{j+1}-1})}.
\end{equation}
Let us replace the constraints $\bi\in \{1,\ldots, M\}^k_\sharp$ and $\bi \sim (\vec \ell, i_{\vec \ell})$ by sum over
$(i_{\ell_j+1}, \ldots, i_{\ell_{j+1}-1})\in \{1,\ldots, i_{\ell_j}-1\}^{\ell_{j+1}-\ell_j-1}$ for each $1\leq j\leq m$, i.e., approximate $\bs{Z}_\hbeta^{(M)}$ by
\begin{equation}
\bs{\widehat Z}_\hbeta^{(M)} = 1+
\sum_{k=1}^\infty\sum_{m=1}^k \frac{\hat\gb^k}{M^{\frac{k}{2}}}\,
\sumtwo{1=\ell_1<\cdots<\ell_m\leq k}{1\leq i_{\ell_1}<i_{\ell_2}<\cdots <i_{\ell_m}\leq M }
\sumthree{(i_{\ell_{r}+1}, \ldots, i_{\ell_{r+1}-1})}{ \in
\{1,\ldots, i_{\ell_{r}}-1\}^{\ell_{r+1}-\ell_{r}-1}}
{\text{for } r=1,\ldots, m}
\prod_{j=1}^m \zeta_{(i_{\ell_j},...,i_{\ell_{j+1}-1})},
\end{equation}
where we have extended the i.i.d.\ family $(\zeta_{\bi})_{\bi \in D_M}$ to include new independent standard normals $\zeta_{(a_1, \ldots, a_r)}$ indexed by dominated sequences $(a_1, \cdots, a_r)\in \{1, \ldots, M\}^r\backslash \{1, \ldots, M\}^r_\sharp$.

Note that $\bs{\widehat Z}_\hbeta^{(M)}$ contains more summands than $\bs{Z}_\hbeta^{(M)}$, and the summands are orthogonal.
A simple calculation shows that both $\Vert \bs{\widehat Z}_\hbeta^{(M)}\Vert_2^2$ and $\Vert \bs{Z}_\hbeta^{(M)}\Vert_2^2$ tend to
$1+\sum_{k=1}^\infty \hbeta^{2k}= (1-\hbeta^2)^{-1}$ as $M\to\infty$. Therefore
$$
\Vert \bs{\widehat Z}_\hbeta^{(M)} - \bs{Z}_\hbeta^{(M)}\Vert_2^2\asto{M} 0.
$$

For $a\in \N$ and $r\in\N$, let us now denote
 \begin{equation}\label{xra}
	\xi_r(a) := \sum_{(a_2, \ldots, a_r) \in \{1, \ldots, a-1\}^{r-1}}
	\zeta_{(a,a_2, \ldots, a_r)}.
\end{equation}
Denoting $r_j:=\ell_{j+1}-\ell_j$ (with $\ell_{m+1}:=k+1$) and $a_j:=i_{\ell_j}$, we can then rewrite $\bs{\widehat Z}_\hbeta^{(M)}$ as
\begin{eqnarray}\label{bigXi}
\bs{\widehat Z}_\hbeta^{(M)} & = & 1+
\sum_{k=1}^\infty\sum_{m=1}^k \frac{\hat\gb^k}{M^{\frac{k}{2}}}\,
\sumtwo{1=\ell_1<\cdots<\ell_m\leq k}{1\leq a_1< a_2<\cdots < a_m\leq M }
\prod_{j=1}^m \xi_{\ell_{j+1}-\ell_j}(a_j) \nonumber \\
&=&1+
\sum_{k=1}^\infty\sum_{m=1}^k \frac{\hat\gb^k}{M^{\frac{k}{2}}}\,
\sumtwo{r_1,\dots r_m\in \bbN}{ r_1+\cdots+ r_m=k }\,\, \sum_{1\leq a_1<a_2<\cdots <a_m\leq M}
 \prod_{j=1}^m \,\,\xi_{r_{j}}(a_j)  \nonumber\\
&=&1 + \sum_{m=1}^\infty \
	\sum_{r_1, \ldots, r_m \in \N}
	\sum_{1 \le a_1 < a_2 < \ldots < a_m \le M} \  \prod_{j=1}^m
	 \frac{\hbeta^{r_j}}{M^{\frac{r_j}{2}}}\ \xi_{r_j}(a_j)  \nonumber\\
&=& 1 + \sum_{m=1}^\infty \
	\sum_{r_1, \ldots, r_m \in \N}
	\sumtwo{0 < t_1 < t_2 < \ldots < t_m \le 1}{t_1, \ldots, t_m \in \frac{1}{M}\N}
	\ \prod_{j=1}^m \frac{\hbeta^{r_j}}{M^{\frac{r_j}{2}}}\ \xi_{r_j}(M t_j)  \nonumber\\
&=& 1 + \sum_{m=1}^\infty
	\sumtwo{0 < t_1 < t_2 < \ldots < t_m \le 1}{t_1, \ldots, t_m \in \frac{1}{M}\N}
 	\ \prod_{j=1}^m \bigg\{\sum_{r\in\bbN}\frac{\hbeta^{r}}{M^{\frac{r}{2}}}\ \xi_{r}(M t_j)\bigg\}, \label{bsZhb3}
\end{eqnarray}
where we could interchange summations because the series is $L^2$ convergent when $\hbeta\in (0,1)$.

We note that $(\hat\gb/\sqrt{M})^r \xi_r(Mt)$ are independent normal random variables for different values of $r\in\bbN$ and $t\in M^{-1}\bbN$, and hence the collection of random variables
\[
\Xi_{M,t} := \sum_{r\in\bbN}\frac{\hbeta^{r}}{M^{\frac{r}{2}}}\ \xi_{r}(M t), \qquad
t\in (0,1]\cap \frac{1}{M}\N,
\]
are also independent normal with mean zero and variance
$$
{\mathbb V}{\rm ar}(\Xi_{M,t}) =  \sum_{r\in\bbN}\frac{\hbeta^{2r}}{M^r}
{\mathbb V}{\rm ar}(\xi_r(Mt)) = \sum_{r\in\N} \frac{\hbeta^{2r}}{M^r} (Mt-1)^{r-1} = \frac{\hbeta^2}{M} \cdot \frac{1+\epsilon_M(t)}{1-\hbeta^2 t},
$$
where
\begin{align*}
|\epsilon_M(t)|= \frac{1}{M(1-\hat\gb^2 t)+1} \leq  \frac{1}{M(1-\hat\gb^2 )+1},
\end{align*}
which tends to $0$ uniformly in $t\in [0,1]$ as $M$ tends to $\infty$, provided $\hbeta<1$.
Therefore we can represent $\Xi_{M,t}$ in terms of a standard Wiener process $W$:
 \begin{align}\label{Xiencode}
\Xi_{M,t} = \frac{\hbeta (1+\epsilon_M(t))}{\sqrt{1-\hbeta^2 \, t}}
 \int_{t-\frac{1}{M}}^{t} \dd W_s, \qquad t\in [0,1]\cap \frac{1}{M}\N.
\end{align}
We can then write
\begin{align}\label{hatZMchaos}
\bs{\widehat Z}_\hbeta^{(M)} =
1 + \sum_{m=1}^\infty
	 \sumtwo{0 < t_1 < t_2 < \ldots < t_m \le 1}{t_1, \ldots, t_m \in \frac{1}{M}\N}  \,\,\prod_{j=1}^m
	 \frac{\hbeta(1+\epsilon_M(t))}{\sqrt{1-\hbeta^2\, t_j}} \int_{t_{j}-\frac{1}{M}}^{t_j} \dd W_s.
\end{align}
For $\hbeta<1$, it is easily seen that
$$
\bs{\widehat Z}_\hbeta^{(M)} \xrightarrow[\,M\to\infty\,]{L^2}  1 + \sum_{m=1}^\infty \ \
	\idotsint\limits_{0 < t_1 < \ldots < t_m < 1} \
	\prod_{j=1}^m \frac{\hbeta}{\sqrt{1-\hbeta^2\, t_j}} \dd W_{t_j}
	=  : \exp \Bigg\{ \int_0^1  \frac{\hbeta}{\sqrt{1-\hbeta^2\, t}} \dd W(t) \Bigg\} : \ ,
$$
where the last equality holds by the properties of the Wick exponential \cite[\S 3.2]{J97}.
Since the last expression is precisely $\bs{Z}_\hbeta$, the proof is completed.
\end{proof}

\begin{proof}[Proof of Theorem \ref{thm:subcritical}] When $\hbeta\in (0,1)$,  the convergence
of $Z^{\omega}_{N, \beta_N}$ to $\bs{Z}_\hbeta$ follows readily from the approximation
steps {\bf (A1)--(A4)} and the key step {\bf (K)}, as explained in Section \ref{ss:proofsteps}.
The convergence of the second moment
$\bbE[(Z_{N,\beta_N}^\omega)^2]\to\bbE[(\bs{Z}_{\hat\beta})^2] = \frac{1}{1-\hbeta^2}$
for $\hbeta\in (0,1)$ is a simple calculation, using
\eqref{ZA1K} and $\bbE\big[(\widehat Z_N^{(k)})^2\big] = 1$ (recall \eqref{eq:norma1}).

When $\hbeta\geq 1$, the convergence in law $Z^{\omega}_{N, \beta_N} \to 0$
follows a standard argument, which we include for completeness. Note that it suffices to show that for some $\theta\in(0,1)$, the fractional moment $\bbE[(Z_{N,\gb_N}^\go)^\theta]$ converges to zero as $N\to\infty$.

First we show that $\bbE[(Z_{N,\gb}^\go)^\theta]$ is non-increasing in $\gb$. Indeed,
\begin{align*}
\frac{\dd\ }{\dd \gb} \bbE[(Z_{N,\gb}^\go)^\theta] &=
  \theta \bbE\bigg[ \,\E \bigg[\sum_{n=1}^N (\go_{n,x_n} -\gl'(\gb)) \,e^{\sum_{n=1}^N({\gb\go_{n,x_n}-\gl(\gb))}} \bigg] \,(Z_{N,\gb}^{\go})^{\theta-1}\bigg]\\
   &=\theta \sum_{n=1}^N \E \bigg[ \,\bbE\Big[  \big(\go_{n,x_n}-\gl'(\gb)\,\big) \,e^{\sum_{i=1}^N({\gb\go_{i,x_i}-\gl(\gb))}}  \,(Z_{N,\gb}^{\go})^{\theta-1}\Big]\bigg] \\
   &=\theta \sum_{n=1}^N \E \bigg[ \,\widetilde\bbE\Big[  \big(\go_{n,x_n}-\gl'(\gb)\,\big) (Z_{N,\gb}^{\go})^{\theta-1}\Big]\bigg],
\end{align*}
where we have interpreted $e^{\sum_{i=1}^N({\gb\go_{i,x_i}-\gl(\gb))}}$ as a probability density
for a new law $\tilde\bbP$
which exponentially tilts $\omega_{i, x_i}$ for each $1\leq i\leq N$. Note that $(\go_{n, x_n}-
\gl'(\gb))$ is increasing in $\omega_{n, x_n}$, while $(Z_{N,\gb}^{\go})^{\theta-1}$ is decreasing in $\omega_{n, x_n}$ because $\theta\in(0,1)$. Therefore by the FKG inequality,
\begin{align*}
\frac{\dd\ }{\dd \gb} \bbE[(Z_{N,\gb}^\go)^\theta] &\leq
 \theta \sum_{n=1}^N \E \Big[ \,\widetilde \bbE\big[\go_{n,x_n}-\gl'(\gb)\big]\, \widetilde \bbE\big[(Z_{N,\gb}^{\go})^{\theta-1}\big]\Bigg]=0 \,,
\end{align*}
since
\begin{align*}
\widetilde \bbE\big[\go_{n,x_n}-\gl'(\gb)\big] =
\bbE\Big[  \big(\go_{n, x_n}-\gl'(\gb)\big) e^{{\gb\go_{n,x_n}-\gl(\gb)}} \Big]
=\frac{\dd\ }{\dd \gb} \bbE\Big[ \,e^{{\gb\go_{n,x_n}-\gl(\gb)}} \Big]=0.
\end{align*}

We have just shown that
$\bbE[(Z_{N,\hbeta'/\sqrt{\Ro_N}}^\go)^\theta]\leq \bbE[(Z_{N,\hbeta/\sqrt{\Ro_N}}^\go)^\theta]$,
 for any $\hat\gb<1\leq \hbeta'$. Since $Z_{N,\hbeta/\sqrt{\Ro_N}}^\go$
converges in distribution to $\bs{Z}_\hbeta$ when $\hbeta<1$, and $(Z_{N,\hbeta/\sqrt{\Ro_N}}^\go)^\theta$ is uniformly integrable,
because $\theta \in (0,1)$ and $\bbE[Z_{N,\hbeta/\sqrt{\Ro_N}}^\go]=1$,
by the first part of Theorem~\ref{thm:subcritical}
we then have
\begin{align*}
\limsup_{N\to\infty} \bbE[(Z_{N,\hbeta'/\sqrt{\Ro_N}}^\go)^\theta] & \leq \limsup_{N\to\infty} \bbE[(Z_{N,\hbeta/\sqrt{\Ro_N}}^\go)^\theta] \\
& =\bbE\bigg[\exp\Big(\theta \int_0^1\frac{\hat\gb\,}{\sqrt{1-\hat\gb^2\,t}} \dd W(t)-
\frac{\theta}{2} \int_0^1\frac{\hat\gb^2}{1-\hat\gb^2\,t} \dd t \Big)\bigg]\\
&= \exp\Big(
\frac{\theta(\theta-1)}{2} \int_0^1\frac{\hat\gb^2}{1-\hat\gb^2\,t} \dd t \Big) =\big(1-\hat\gb^2\big)^{-\frac{\theta(\theta-1)}{2}}.
\end{align*}
Letting $\hbeta\nearrow 1$ then shows that $\bbE[(Z_{N,\hbeta'/\sqrt{\Ro_N}}^\go)^\theta]\to 0$ as $N\to\infty$ whenever $\hbeta'\geq 1$.
\end{proof}

\section{Proof of Theorem \ref{thm:short-cor}}

To prove Theorem \ref{thm:short-cor}, we first need to extend Proposition \ref{thm:main1} to random
variables $\Theta^{N;M}_{\bi}$ which form the building blocks of partition functions
$Z^\omega_{N,\beta}(\sfX)$ with starting points $\sfX=(x,t)$ other than the origin. More precisely, as
in \eqref{genk}, define
\begin{equation}\label{eq:newTheta}
	\Theta^{N;M}_{\bi}(\sfX) :=
	\left(\frac{M}{\Ro_N}\right)^{k/2} \!\!\!\!
	\sumtwo{n_1 - n_0 \in I_{i_1}, \, n_2 - n_1 \in I_{i_2}, \ldots, \, n_k - n_{k-1} \in I_{i_k}}
	{z_1, z_2, \ldots, z_k \in \Z^d}
	\, \prod_{j=1}^k q_{n_j-n_{j-1}}(z_j-z_{j-1})
	\, \prod_{i=1}^k \eta_{(n_i,z_i)},
\end{equation}
except here $(z_0, n_0)$ is defined to be $\sfX$ instead of the origin.

For $\sfX=(x,t)\in \Z^d \times \N_0$ with $d\in \{0,1,2\}$, recall the definition of $\vvvert X\vvvert$ from \eqref{Xtripnorm}. We then
have the following extension of Proposition \ref{thm:main1}.

\begin{proposition}\label{P:keystepG}
Assume that Hypothesis~\ref{hyp} holds
and $\Ro_N$ in \eqref{eq:replica}--\eqref{eq:RN}
diverges as $N\to\infty$.
For $1\leq k\leq r$, let  $\sfX_N^{(k)}=(x_N^{(k)}, t_N^{(k)})$ be points in $\Z^d\times\N_0$,
such that
\begin{equation} \label{eq:hypRN}
	\forall 1\leq k, l\leq r: \qquad
	\Ro_{\vvvert \sfX_N^{(k)} - \sfX_N^{(l)} \vvvert}/\Ro_N = \zeta_{k,l}+o(1)
	\ \ \text{for some} \ \
	\zeta_{k,l} \in [0,1]\,.
\end{equation}
For $M\in\N$, let us denote by $\tilde D_M$ be the set of dominated sequences\
$\bi \in D_M$, cf.\ \eqref{eq:DM2}, for which $i_1/M$ is well separated from all the $\zeta_{k,l}$ in the following sense:
\begin{equation}\label{eq:separate}
	\tilde D_M := \{ \bi \in D_M: \ | i_1 / M - \zeta_{k,l} | > 1/M \ \forall 1 \le k,l \le r \} \,.
\end{equation}
Then the vector $(\Theta^{N;M}_{\bi}(\sfX_N^{(k)}))_{1 \le k \le r, \bi\in\tilde D_M}$ converges in law as $N \to \infty$
to a centered Gaussian vector $(\zeta_{\bi}^{(k)})_{1 \le k \le r, \, \bi \in \tilde D_M}$ with covariance matrix
\begin{equation}\label{eq:cov}
	\bbcov[\zeta_{\bi}^{(k)}, \zeta_{\bi'}^{(l)}] = \ind_{\{\bi = \bi'\}} \ind_{\{i_1/M > \zeta_{k,l}\}} \,.
\end{equation}
\end{proposition}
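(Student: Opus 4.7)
The plan is to follow the strategy of Proposition~\ref{thm:main1} and apply the multi-dimensional fourth moment theorem (Theorem~\ref{T:4mom}) to the family $\big(\Theta^{N;M}_{\bi}(\sfX_N^{(k)})\big)_{1 \le k \le r,\, \bi \in \tilde D_M}$. The main novelty lies in verifying the covariance structure \eqref{eq:cov}; the fourth-moment and influence conditions are minor adaptations of the arguments already developed for $\sfX=0$.

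To compute the covariance $\bbE\big[\Theta^{N;M}_{\bi}(\sfX_N^{(k)})\,\Theta^{N;M}_{\bi'}(\sfX_N^{(l)})\big]$, I would expand as in \eqref{eq:varcom}. Orthogonality of the $\eta$-variables forces matching of space-time points, hence $|\bi|=|\bi'|$ and $\bn=\bn'$, $\bx=\bx'$; this in turn requires both $n_1 - t_N^{(k)} \in I_{i_1}$ and $n_1 - t_N^{(l)} \in I_{i'_1}$ to hold simultaneously. The separation condition \eqref{eq:separate}, combined with the slow variation of $\Ro_N$, implies $\vvvert \sfX_N^{(k)}-\sfX_N^{(l)}\vvvert = o(\sft_{i-1})$ whenever $i/M > \zeta_{k,l}$, and conversely $\vvvert \sfX_N^{(k)}-\sfX_N^{(l)}\vvvert \gg \sft_i$ whenever $i/M < \zeta_{k,l}$. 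In the first regime the two compatibility conditions are asymptotically equivalent, forcing $\bi=\bi'$; in the second they are incompatible. After summing the increments $n_j-n_{j-1}\in I_{i_j}$ for $j \ge 2$, which contributes a factor $(\Ro_N/M)^{|\bi|-1}(1+o(1))$, the covariance reduces (for $\bi = \bi'$) to
\begin{equation*}
\frac{M}{\Ro_N}\!\! \sum_{\substack{n_1 - t_N^{(k)} \in I_{i_1}\\ n_1 - t_N^{(l)} \in I_{i_1}}} \sum_{x_1 \in \Z^d} q_{n_1-t_N^{(k)}}\big(x_1-x_N^{(k)}\big)\, q_{n_1-t_N^{(l)}}\big(x_1-x_N^{(l)}\big) + o(1).
\end{equation*}
When $i_1/M > \zeta_{k,l}$, both $|t_N^{(k)}-t_N^{(l)}|$ and $\phi^{\leftarrow}(|x_N^{(k)}-x_N^{(l)}|)$ are negligible compared to $\sft_{i_1-1}$; the local limit theorem in Hypothesis~\ref{hyp} then shows the summand is asymptotically $q_{n_1-t_N^{(k)}}(x_1-x_N^{(k)})^2$, yielding the limit $(\Ro_{\sft_{i_1}}-\Ro_{\sft_{i_1-1}})\cdot M/\Ro_N \to 1$. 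When $i_1/M < \zeta_{k,l}$, either the range for $n_1$ is empty (when the time-gap $|t_N^{(k)}-t_N^{(l)}|$ exceeds $|I_{i_1}|$) or the spatial separation $|x_N^{(k)}-x_N^{(l)}|$ vastly exceeds $\phi(\sft_{i_1})$, so that the Gaussian (or Cauchy) decay of the local limit profile $g$ kills the spatial sum. Either way the covariance vanishes, confirming \eqref{eq:cov}.

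The fourth-moment condition $\bbE\big[(\Theta^{N;M}_{\bi}(\sfX_N^{(k)}))^4\big] \to 3$ (for standard-Gaussian $\eta$'s, noting $V(\bi,\bi)=\ind_{\{i_1/M>\zeta_{k,k}\}}=1$ since $\zeta_{k,k}=0$) is handled by repeating the case analysis (Cases~1--5) in the proof of Proposition~\ref{thm:main1}: those arguments rely only on $\Ro_N$-bounds and the estimate \eqref{eq:ifty}, and translating the starting point does not affect them. The influence condition (iii) on the degree-one polynomials follows from $\sqrt{M/\Ro_N}\,\max_{n\le N}\|q_n\|_\infty \to 0$, again by \eqref{eq:ifty}. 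Once these are verified, Theorem~\ref{T:4mom} yields the joint Gaussian convergence with covariance \eqref{eq:cov}. The main obstacle is the scale comparison underlying the covariance computation: carefully exploiting slow variation of $\Ro_N$ to separate $\vvvert\sfX_N^{(k)}-\sfX_N^{(l)}\vvvert$ from the block scales $\sft_{i_1-1}$, $\sft_{i_1}$ depending on the sign of $i_1/M-\zeta_{k,l}$, together with the explicit form of the local limit profile $g$ to extract (resp.\ suppress) the leading first-scale contribution when $i_1/M$ exceeds (resp.\ falls below) $\zeta_{k,l}$.
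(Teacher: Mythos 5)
Your proposal is correct and follows essentially the same route as the paper: reduce to the multidimensional fourth moment theorem (variance, fourth moment and influence conditions carry over from Proposition~\ref{thm:main1} by translation invariance), and compute the covariance by the same scale-separation dichotomy — when $i_1/M>\zeta_{k,l}$ use $\vvvert \sfX_N^{(k)}-\sfX_N^{(l)}\vvvert=o(\sft_{i_1-1})$ and the local limit theorem to replace one starting point by the other, and when $i_1/M<\zeta_{k,l}$ use that the time constraint sets are disjoint or the spatial kernels concentrate on disjoint regions of scale $\phi(\sft_{i_1+1})$. Your treatment of the first block index (equal or differing $i_1$ with equal tails) is, if anything, slightly more explicit than the paper's one-line remark about unmatched $\eta$'s.
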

\begin{proof}
The random variable $\Theta^{N;M}_{\bi}((x,t))$
has the same law as $\Theta^{N;M}_{\bi}((0,0))$. Therefore,
the proof of Proposition \ref{thm:main1} readily implies that
for each $\bi \in \tilde D_M$ and $1\leq k\leq r$,
$$
\bbvar(\Theta^{N;M}_{\bi}(\sfX_N^{(k)})) \asto{N} 1 \quad \mbox{and} \quad  \bbE\big[\big(\Theta^{N;M}_{\bi}(\sfX_N^{(k)})\big)^4\big] \asto{N} 3.
$$
By the (multidimensional) Fourth Moment Theorem \ref{T:4mom}, it then only remains to show that
\begin{equation}\label{covconvg}
\bbcov(\Theta^{N;M}_{\bi}(\sfX_N^{(k)}), \Theta^{N;M}_{\bi'}(\sfX_N^{(l)})) \asto{N} \ind_{\{\bi = \bi'\}} \ind_{\{i_1/M > \zeta_{k,l}\}}
\quad \forall\, 1\leq k,l\leq r, \ \bi, \bi' \in \tilde D_M.
\end{equation}

Note that when the dominated sequences $\bi, \bi'$ are different,
there are unmatched $\eta$'s and consequently
$\bbcov(\Theta_{\bi}^{N;M}(\sfX^{(k)}_N), \Theta_{\bi'}^{N;M}(\sfX^{(l)}_N))=0$; and when $\bi=\bi'$, we have
\begin{align}
\bbE\big[\Theta_{\bi}^{N;M}(\sfX^{(k)}_N) \, & \Theta_{\bi}^{N;M}(\sfX^{(l)}_N) \big]
=  \left(\frac{M}{\Ro_{N}} \right)^{k}
\!\! \sumtwo{n_1 - t_N^{(k)}\in I_{i_1} \\ n_1-t_N^{(l)}\in I_{i_1} } \,
\!\! \sum_{z_1\in \bbZ^d}
 q_{n_1-t_N^{(k)}}(z_1-x_N^{(k)}) q_{n_1-t_N^{(l)}}(z_1-x_N^{(l)}) \nonumber \\
& \qquad\qquad\qquad\qquad
\times \sumtwo{n_2-n_1 \in I_{i_2}, \, \ldots, \, n_k - n_{k-1} \in I_{i_k}}
	{z_2, \ldots, z_k \in \Z^d} \, \prod_{j=2}^k q_{n_j-n_{j-1}}(z_j-z_{j-1})^2 \nonumber\\
& \qquad \sim \frac{M}{\Ro_{N}}
\!\!\sumtwo{n_1 - t_N^{(k)}\in I_{i_1} \\ n_1-t_N^{(l)}\in I_{i_1} } \,
\!\!\sum_{z_1\in \bbZ^d}
 q_{n_1-t_N^{(k)}}(z_1-x_N^{(k)}) q_{n_1-t_N^{(l)}}(z_1-x_N^{(l)}), \label{Thetacov}
\end{align}
where in the last step we used \eqref{eq:replica}
(recall that $q_n(x) = \P(S_n = x)$ and
we write $f(N)\sim g(N)$ as a shorthand for $\lim_{N\to\infty} f(N)/g(N)=1$.

We first consider the case $i_1/M>\zeta_{k,l}$, which implies
$(i_1-1)/M>\zeta_{k,l}$ since $\bi \in \tilde D_M$.
In this case,
since $I_{i_1} \ni n_1-t_N^{(k)} = n_1 - n_0 \ge \sft_{i_1-1} $, recalling assumption \eqref{eq:hypRN}
we have
\begin{equation*}
	n_1-t_N^{(k)} \gg |t_N^{(k)} - t_N^{(l)}| \,.
\end{equation*}
By Hypothesis~\ref{hyp},
the dominant contribution to \eqref{Thetacov} then comes from $z_1$ with
$$
|z_1-x_N^{(k)}| \gg |x_N^{(k)} - x_N^{(l)}| \quad \mbox{as } N\to\infty.
$$
By the same argument as in the proof of Lemma \ref{lem:appr3}, we can apply the local limit theorem in Hypothesis \ref{hypoth} and replace $(x_N^{(l)}, t_N^{(l)})$ in \eqref{Thetacov} by $(x_N^{(k)}, t_N^{(k)})$, which implies that
$$
\lim_{N\to\infty} \bbE\big[\Theta_{\bi}^{N;M}(\sfX^{(k)}_N) \, \Theta_{\bi}^{N;M}(\sfX^{(l)}_N) \big] =
\lim_{N\to\infty} \bbE\big[\Theta_{\bi}^{N;M}(\sfX^{(k)}_N)^2 \big] = 1.
$$

We next consider  the case $i_1/M<\zeta_{k,l}$, which implies $(i_1+1)/M<\zeta_{k,l}$ since $\bi \in \tilde D_M$. By the definitions \eqref{eq:hypRN} and \eqref{Xtripnorm} of
$\zeta_{k,l}$ and $\vvvert \sfX\vvvert$ , this implies
\begin{equation}\label{gapxt}
\mbox{either} \quad \Ro_{|t_N^{(k)} -t_N^{(l)}|}/\Ro_N = \zeta_{k,l}+o(1); \quad \mbox{or} \quad \Ro_{\phi^{\leftarrow}(|x_N^{(k)} -x_N^{(l)}|)}/\Ro_N = \zeta_{k,l}+o(1),
\end{equation}
where we recall by \eqref{eq:CLT} and \eqref{eq:phiarrow}
that $\phi^{\leftarrow}(|x|):= \min\{n\in\N_0: \phi(n) \geq |x| \}$
with $\phi(n) := (n L(n)^2)^{1/d}$.
We now show that \eqref{gapxt} forces either
$n_1$ or $z_1$ to vary in intervals with empty intersection.

In the first case in \eqref{gapxt}, we have $|t_N^{(k)} -t_N^{(l)}| \gg \sft_{i_1+1}\gg |I_{i_1}|$ as $N\to\infty$, where we recall that $I_{i_1}=(\sft_{i_1-1}, \sft_{i_1}]$ with $\Ro_{\sft_{i_1}}\sim \frac{i_1}{M} \Ro_N$. Therefore the constraints $n_1- t_N^{(k)}\in I_{i_1}$ and $n_1- t_N^{(l)}\in  I_{i_1}$ in \eqref{Thetacov} are incompatible and the sum equals zero.

In the second case in \eqref{gapxt}, we have $\phi^{\leftarrow}(|x_N^{(k)} -x_N^{(l)}|)\gg \sft_{i_1+1}$, and hence $|x_N^{(k)} -x_N^{(l)}|\gg \phi(\sft_{i_1+1})$.
Therefore for $N$ large, for any fixed $C > 0$
$$
\Big\{ z_1\in \Z^d: |z_1-x_N^{(k)}|\leq C \phi(\sft_{i_1+1})
\Big\} \,\cap\,
\Big\{z_1\in \Z^d: |z_1-x_N^{(l)}|\leq C \phi(\sft_{i_1+1})\Big\} = \emptyset.
$$
By Hypothesis \ref{hypoth}, uniformly in $n\in I_{i_1}=(\sft_{i_1-1}, \sft_{i_1}]$, the dominant contribution to $\sum_z q_n(z)$ and $\sum_z q_n^2(z)$ come from
the region $|z|\leq C\phi(\sft_{i_1+1})$.
Partitioning the sum in \eqref{Thetacov} according to whether
$|z_1-x_N^{(k)}|\leq C \phi(\sft_{i_1+1})$, or $|z_1-x_N^{(l)}|
\leq C \phi(\sft_{i_1+1})$, or neither,
it then follows
that the quantity in \eqref{Thetacov} tends to $0$ as $N\to\infty$, which concludes the proof of \eqref{covconvg}.
\end{proof}
\medskip

\begin{proof}[Proof of Theorem \ref{thm:short-cor}]
The approximation steps {\bf (A1)--(A3)} for the partition function $Z^\omega_{N, \beta_N}$
outlined in Section \ref{ss:proofsteps} (and proved in Section \ref{sec:CG})
also applies if the starting point of the polymer is different from the origin.
For the step {\bf (A1)}, in order to show that the constraint $n_0 < n_1 < \ldots < n_k \le N$
can be replaced by $1 \le n_1 - n_0 , \ldots, n_k - n_{k-1} \le N$, we need to use the assumption
$\Ro_{N-t_N^{(k)}}/\Ro_N =1-o(1)$ in \eqref{eq:condi}.

It follows that
we can approximate the partition functions
$(Z^\omega_{N, \beta_N}(\sfX_N^{(j)}))_{1\leq j\leq r}$ jointly in $L^2$
(with an error uniformly small in $N$, when $M$ is large, cf.\ \eqref{ZA3K}) by

\begin{equation}\label{sfX-appr}
Z_{N, \beta_N}^{{\bf (A3)}}(\sfX_N^{(j)}):= \! 1+ \!\sum_{k=1}^M  \frac{\hbeta^k}{M^{\frac{k}{2}}} \!\!\sum_{\bi\in \{1,\ldots, M\}^k_\sharp}  \!\!\!\!\!\!
	\Theta^{N;M}_{\bi^{(1)}}(\sfX_N^{(j)}) \Theta^{N;M}_{\bi^{(2)}}(\sfX_N^{(j)}) \cdots \Theta^{N;M}_{\bi^{(\mathfrak{m})}} (\sfX_N^{(j)}),
	\quad 1\leq j\leq r,
\end{equation}
where we recall that $\Theta^{N;M}_{\bi}(\sfX_N^{(j)})$ is defined
in \eqref{eq:newTheta}.

By Proposition \ref{P:keystepG}, as $N\to\infty$, $(Z_{N, \beta_N}^{{\bf (A3)}}(\sfX_N^{(j)}))_{1\leq j\leq r}$ converge jointly in distribution to 
\begin{align*}
\bs{Z}_\hbeta^{(M,j)} := 1+ \!\sum_{k=1}^M  \frac{\hbeta^k}{M^{\frac{k}{2}}} \sum_{\bi\in \{1,\ldots, M\}^k_\sharp}  \!\!\!\!\!\!
             \,\,\,\,\zeta_{\bi^{(1)}}^{(j)} \zeta_{\bi^{(2)}}^{(j)} \cdots \zeta_{\bi^{(\mathfrak{m})}}^{(j)}, \qquad 1\leq j\leq r.
\end{align*}
It only remains to prove the analogue of Lemma \ref{prop:limCG} and show that as $M\to\infty$, $(\bs{Z}_\hbeta^{(M,j)})_{1\leq j\leq r}$ converge jointly to the family of log-normal random variables $(\colon e^{\sfY_j}\colon)_{1 \le j\le r}$ in \eqref{fddconv}-\eqref{meanvar}.

Following the same steps as in the proof of Lemma \ref{prop:limCG} up to the resummation procedure in \eqref{bigXi}, we can approximate $\bs{Z}_\hbeta^{(M,j)}$ in $L^2$ (as $M\to\infty$) by
  \begin{align*}
 \bs{\widehat Z}_\hbeta^{(M,j)}  = 1 + \sum_{m=1}^\infty
	\sumtwo{0 < t_1 < t_2 < \ldots < t_m \le 1}{t_1, \ldots, t_m \in \frac{1}{M}\N}
 	\ \prod_{i=1}^m\ \Xi_{M, t_i}^{(j)},
\end{align*}
where
\begin{align*}
\Xi_{M,t}^{(j)} := \sum_{r\in\bbN}\frac{\hbeta^{r}}{M^{\frac{r}{2}}}\ \xi_{r}^{(j)}(M t) \quad \mbox{for }  t\in [0,1]\cap \frac{1}{M}\N,
\quad \mbox{and} \quad \xi_r^{(j)}(a) := \!\!\!\!\!\!\!\!\!\!\!\!\!\! \sum_{(a_2, \ldots, a_r) \in \{1, \ldots, a-1\}^{r-1}} \!\!\!\!\!\!\!\!\!
	\zeta_{(a,a_2, \ldots, a_r)}^{(j)}.
\end{align*}
Similar to \eqref{Xiencode}, we can encode the family of jointly Gaussian random variables $\Xi_{M,t}^{(j)}$ as
\begin{align}\label{Xiencode2}
\Xi_{M,t}^{(j)} = \frac{\hbeta (1+o(1))}{\sqrt{1-\hbeta^2 \, t}}
 \int_{t-\frac{1}{M}}^{t} \dd W_s^{(j)}, \qquad t\in [0,1]\cap \frac{1}{M}\N,
\end{align}
where $(W^{(j)})_{1\leq j \leq r}$ is a family of correlated Brownian motions
(the explicit form of the correlations will be derived in a moment).
Therefore for all $1\leq j\leq r$,
\begin{align*}\label{hatZMchaosj}
\bs{\widehat Z}_\hbeta^{(M, j)} =
1 + \sum_{m=1}^\infty
	 \sumtwo{0 < t_1 < t_2 < \ldots < t_m \le 1}{t_1, \ldots, t_m \in \frac{1}{M}\N}  \,\,\prod_{i=1}^m
	 \frac{\hbeta(1+o(1))}{\sqrt{1-\hbeta^2\, t_i}} \int_{t_{i}-\frac{1}{M}}^{t_i} \dd W^{(j)}_s\ \xrightarrow[\,M\to\infty\,]{L^2}
\  \ : e^{\sfY_j} : \ ,
\end{align*}
where $\sfY_j:=\int_0^1  \frac{\hbeta}{\sqrt{1-\hbeta^2\, t}} \dd W^{(j)}_t$. It now only remains to find the covariance between $(\sfY_j)_{1\leq j\leq r}$.

Note that for each $1\leq k, l\leq r$ and $s, t\in [0,1]\cap \frac{1}{M}\N$, by the definition of $\xi^{(j)}_r$ and Proposition \ref{P:keystepG}, we have
\begin{align*}
\bbE\big[\Xi_{M,t}^{(k)} \Xi_{M,s}^{(l)} \big] & =
\sum_{r\in\bbN}\frac{\hbeta^{2r}}{M^{r}}\ \bbE\big[  \xi_{r}^{(k)}(M t) \, \xi_{r}^{(l)}(M s) \big] \\
& = \ind_{\{t=s\}} \sum_{r\in\bbN}\frac{\hbeta^{2r}}{M^{r}} \sum_{(a_2, \ldots, a_r) \in \{1, \ldots, Mt-1\}^{r-1}} \bbE\big[
	\zeta_{(Mt,a_2, \ldots, a_r)}^{(k)} \zeta_{(Mt,a_2, \ldots, a_r)}^{(l)}\big] \\
& = \ind_{\{t=s\}} \sum_{r\in\bbN}\frac{\hbeta^{2r}}{M^{r}} \sum_{(a_2, \ldots, a_r) \in \{1, \ldots, Mt-1\}^{r-1}} \!\!\!\!\!\!\!\!\!\!\!
 \ind_{\{t>\zeta_{k,l}\}} = \frac{\ind_{\{t=s>\zeta_{k,l}\}}}{M}\cdot \frac{\hbeta^2(1+o(1))}{1-\hbeta^2 t}.
\end{align*}
Therefore for all $s, t\in [0,1]\cap \frac{1}{M}\N$, we have
$$
\bbE\Big[\int_{t-\frac{1}{M}}^t \dd W_u^{(k)} \int_{s-\frac{1}{M}}^s\dd W_u^{(l)}\Big]= \frac{\ind_{\{t=s>\zeta_{k,l}\}}}{M},
$$
and hence $\bbE[W^{(k)}(T) W^{(l)}(S)] = \int_{\zeta_{k,l}}^{T\wedge S} \dd t$ for all $0\leq S\leq T\leq 1$. This implies that
$$
\bbcov(Y_k, Y_l) = \bbE\Big[ \int_0^1  \frac{\hbeta\ \dd W^{(k)}_t}{(1-\hbeta^2\, t)^{\frac{1}{2}}} \int_0^1  \frac{\hbeta\ \dd W^{(l)}_t }{(1-\hbeta^2\, t)^{\frac{1}{2}}} \Big] = \int_{\zeta_{k,l}}^1 \frac{\hbeta^2}{1-\hbeta^2t} \dd t =  \log \frac{1-\hat\beta^2 \zeta_{k,l}}{1-\hat\beta^2},
$$
which concludes the proof.
\end{proof}

\section{Proof of Theorem~\ref{thm:field}}

In this section, we prove Theorem \ref{thm:field}. First we prove an analogue of Proposition \ref{thm:main1}, the key step {\bf (K)} in the proof of Theorem \ref{thm:subcritical}. The difference here is that we need to average over the starting point of the partition function. As in Theorem \ref{thm:field}, let $\psi: \R^d \times [0,1] \to \R$ be a continuous function with compact support. For any finite strictly increasing sequence $\bn=(n_1,...,n_{|\bn|})$ and
$0\le n_0 <n_1$, we then modify the definition of $Q_{\bn}$ in \eqref{eq:bq0} as follows:
\begin{equation}\label{Qpsi00}
Q_{(n_0, \bn)}^{\psi}
    := \sum_{x_0\in\Z^d} \sum_{\bx \in (\Z^d)^{|\bn|}} \,
	\Big(\prod_{j=1}^{|\bn|} q_{n_j - n_{j-1}} (x_j - x_{j-1}) \, \eta_{(n_j,x_j)} \Big)   \psi(\widehat \sfX_N),
\end{equation}
where
\begin{equation*}
	\widehat \sfX_N :=
	\bigg( \frac{x_0}{\phi(N)} \,, \frac{n_0}{N} \bigg) \qquad \mbox{with} \quad \phi(N):= (L(N)^2 N)^{1/d}.
\end{equation*}
To decompose $J^\psi_N$ in \eqref{eq:J0} as we decomposed the partition function in terms of the $\Theta$'s, for $\bi \in \{1,\ldots,M\}^k$, we need to modify the definition of $\Theta^{N,M}_{\bi}$ in \eqref{thetai} as follows:
\begin{align}\label{psi_theta}
\Theta_{\bi}^{N,M;\psi}  :=
\frac{L(N)}{\phi(N)^d N}
\left(\frac{M}{\Ro_{N} }\right)^{\frac{|\bi|-1}{2}}
\sum_{0 \leq n_0 < N} \, \sum_{\bn \prec \bi} Q_{(n_0,\bn)}^\psi.
\end{align}

The following analogue of Proposition~\ref{thm:main1}
is the key step in the proof of Theorem~\ref{thm:field}.

\begin{proposition}\label{KeyStep2}
Assume that Hypothesis \ref{hypoth} holds, and $\Ro_N$ in \eqref{eq:replica}
is a slowly varying function which diverges as $N\to\infty$.
For each $M\in\N$ and $\psi\in C_c(\R^d \times [0,1])$, the random variables
$(\Theta^{N,M,\psi}_{\bi})_{\bi \in D_M}$ converge in joint distribution to a family of independent
Gaussian random variables $(\zeta^\psi_{\bi})_{\bi\in D_M}$ with $\zeta^\psi_{\bi}=0$ if $i_1<M$;
and if $i_1=M$, then $\zeta^\psi_{\bi}$ has mean zero and variance
\begin{equation}
V^\psi:=
\int_{(\R^d\times [0,1])^2} \psi(x,t) K((x,t), (x',t')) \psi(x', t') \dd x \dd t \dd x' \dd t',
\end{equation}
where $K$ is defined in \eqref{eq:kappakernel}.
\end{proposition}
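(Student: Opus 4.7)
The plan is to follow the strategy of Proposition~\ref{thm:main1}: (i) identify the limiting covariance of the family $(\Theta^{N,M;\psi}_{\bi})_{\bi\in D_M}$; (ii) verify the fourth-moment condition; (iii) apply the multi-dimensional Theorem~\ref{T:4mom}. Orthogonality $\bbE[\Theta^{N,M;\psi}_{\bi}\,\Theta^{N,M;\psi}_{\bi'}]=0$ for $\bi\neq\bi'$ is immediate from chaos orthogonality: the compatibilities $\bn\prec\bi$ and $\bn'\prec\bi'$ together with the disjointness of the intervals $I_i$ force $\bn\neq\bn'$, so the $\eta$'s cannot be paired.

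For the variance, pairing of the $\eta$'s requires $\bn=\bn'$ and $\bx=\bx'$, and telescoping $\sum_{x_j}q_{m_j}(x_j-x_{j-1})^2=\|q_{m_j}\|^2$ for $j\ge 2$ produces the replica factor $\prod_{j\ge 2}\sum_{m_j\in I_{i_j}}\|q_{m_j}\|^2\sim(\Ro_N/M)^{|\bi|-1}$, which exactly cancels $(M/\Ro_N)^{|\bi|-1}$. What remains is a sum over $(n_0,m_1,m'_1,x_0,x'_0,x_1)$ with $n'_0=n_0+m_1-m'_1$ and only the two first-step kernels $q_{m_1},q_{m'_1}$ and the two $\psi$-weights. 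For $i_1<M$ the time shift $|m_1-m'_1|$ and the spatial shift $|x'_0-x_0|$ are microscopic relative to $(N,\phi(N))$, so the two $\psi$-factors asymptotically agree, the simplification $\sum_{x_1,x'_0}q_{m_1}(x_1-x_0)q_{m'_1}(x_1-x'_0)=1$ reduces the variance to order $(|I_{i_1}|/N)^2\int\psi^2\to 0$, and the limit is degenerate. For $i_1=M$ we rescale $(n_0,m_1,m'_1)=(Nt,Ns_1,Ns'_1)$ and $(x_0,x'_0,x_1)=\phi(N)(x,x',\xi)$, invoke Hypothesis~\ref{hyp} to replace each $q_{Ns_j}(\phi(N)\cdot)$ by $\phi(N)^{-d}p_{s_j}(\cdot)$ with $p_s(y):=s^{-1}g(y/s^{1/d})$, and use the symmetric-stable semigroup identity $p_{s_1}*p_{s'_1}=p_{s_1+s'_1}$ to integrate out $\xi$. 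After the change of variables $(s_1,s'_1)\mapsto(r=s_1+s'_1,\tau=s_1-s'_1)$ with $t'=t+\tau$, and after careful bookkeeping of the integration domain and of the Jacobians (which are exactly cancelled by the prefactor $L(N)^2/(\phi(N)^{2d}N^2)$), the result is precisely $\frac{1}{2}\int_{|t-t'|}^{2-(t+t')}p_r(x-x')\,dr=K((x,t),(x',t'))$ from \eqref{eq:kappakernel}, so $\bbE[(\Theta^{N,M;\psi}_\bi)^2]\to V^\psi$ for every dominated $\bi$ with $i_1=M$. The pinning case $(d=0)$ is handled in the same way using $L(n)\sqrt{n}\,q_n\to c$.

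The fourth-moment condition is verified by a direct transcription of the case analysis in the proof of Proposition~\ref{thm:main1}: expand $\bbE[(\Theta^{N,M;\psi}_\bi)^4]$ into four copies $(\ba,\bx),(\bb,\by),(\bc,\bz),(\bd,\bw)$ with four $\psi$-weights at the starting points, and classify by the number $\sfp$ of distinct space-time points and by the connected components of $[\ba]\cup[\bb]\cup[\bc]\cup[\bd]$. The Cauchy--Schwarz bounds of \textbf{Cases 1--5} carry over verbatim; the four additional starting-point sums each contribute $N\phi(N)^d$, which are absorbed by the extra factor $L(N)^2/(\phi(N)^d N)^2$ in the prefactor; the separation $|i_j-i_{j'}|\ge 2$ on $D_M$ is preserved; and only the two-connected-component pair matchings survive, yielding the Gaussian fourth moment $3(V^\psi)^2$. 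The influence condition of Theorem~\ref{T:4mom} for degree-one chaos is immediate since each coefficient is $O(L(N)/(\phi(N)^d N))=o(1)$. Theorem~\ref{T:4mom} then gives joint Gaussian convergence. The principal difficulty is the $i_1=M$ variance identification: unlike in Proposition~\ref{thm:main1} there are genuine spatial integrals, with three spatial variables and two time variables coupled through the two first-step kernels and the shift $n'_0=n_0+m_1-m'_1$, and the essential simplification is the stable-density semigroup property, which collapses the intermediate $\xi$-integral to recover exactly the kernel $K$.
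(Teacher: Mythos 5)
Your proposal mirrors the paper's own proof: kill the blocks with $i_1<M$ by a variance estimate of order $(|I_{i_1}|/N)^2$, identify $V^\psi$ for $i_1=M$ via the local limit theorem, a Riemann-sum approximation and the Chapman--Kolmogorov identity for the Gaussian/Cauchy densities (exactly the computation in \eqref{varpsi3}--\eqref{Kagain}), then transplant the fourth-moment case analysis of Proposition~\ref{thm:main1} and invoke Theorem~\ref{T:4mom}. The variance identification, which you correctly single out as the principal difficulty, is handled the same way as in the paper.

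Two points in your write-up need repair, though neither changes the architecture. First, your claim of exact orthogonality for all $\bi\neq\bi'$ is false: since the constraint $n_1-n_0\in I_{i_1}$ involves the summed-over starting time $n_0$, two dominated sequences differing only in their first index (say $(i_1,\bj)$ and $(i_1',\bj)$ with $i_1\neq i_1'$) generate overlapping chaos monomials, and $\bbE[\Theta^{N,M;\psi}_{\bi}\Theta^{N,M;\psi}_{\bi'}]\neq 0$ in general. This is harmless for the statement, because any such pair contains an index with first entry $<M$, whose $L^2$ norm vanishes, so the limiting covariances are still as claimed by Cauchy--Schwarz; but the argument must be phrased that way (the paper checks orthogonality only among the blocks with $i_1=M$). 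Second, the fourth-moment analysis does not carry over \emph{verbatim}, and your absorption count (``each starting-point sum contributes $N\phi(N)^d$, absorbed by $L(N)^2/(\phi(N)^dN)^2$'') is not dimensionally consistent. The correct accounting treats $q^{N,\psi}_{M,n_1}(x_1)$ as a single unnormalized kernel with $\sum_{n_1,x_1}(q^{N,\psi}_{M,n_1}(x_1))^2\sim V^\psi\,\phi(N)^dN^3$ (cf.\ \eqref{eq:usefu}) and sup bound $\|q^{N,\psi}\|_\infty\le N\|\psi\|_\infty=o\big((\phi(N)^dN^3)^{1/2}\big)$; the latter replaces the estimate $\max_n\|q_n\|_\infty=o(\Ro_N^{1/2})$ in {\bf Case 2}, and in the one-connected-component case ({\bf Case 4}) one also needs the restricted-window bound $\sum_{x,\;f_1<n\le f_1+\bar m_1}(q^{N,\psi}_n(x))^2=o(\phi(N)^dN^3)$, the analogue of $\Ro_{f_1+\bar m_1}-\Ro_{f_1}=o(\Ro_N)$, which follows from the same computation as \eqref{ThePsiVar}. (The same sup bound is what makes the degree-one influence condition hold; your $O(L(N)/(\phi(N)^dN))$ omits the factor $N\|\psi\|_\infty$, though the conclusion $o(1)$ survives.) With these substitutions your scheme goes through and coincides with the paper's proof.
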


\begin{proof}
The proof is similar to that of Proposition \ref{thm:main1}. We will only highlight the changes in the proof. For simplicity, we assume $d\neq 0$. The case $d=0$ can be treated similarly.

First note that we can rewrite $\Theta^{N,M;\psi}_{\bi}$ in the following form:
\begin{align}\label{ThetaPsiRW}
\Theta^{N,M;\psi}_{\bi} = \frac{\big(\frac{M}{\Ro_{N} }\big)^{\frac{|\bi|-1}{2}}}{\phi(N)^{\frac{d}{2}} N^{\frac{3}{2}}} \sum_{\bx \in (\Z^d)^{|\bi|}}
\sumtwo{1\leq n_1\leq N}{n_2-n_1\in I_{i_2}, \ldots, n_{|\bi|}-n_{|\bi|-1}\in I_{|\bi|}} q^\psi_{\bn}(\bx)  \eta_{(\bn, \bx)},
\end{align}
where $\eta_{(\bn, \bx)}=\prod_{j=1}^{|\bn|}\eta_{(n_j, x_j)}$, and $q^\psi_{\bn}(\bx) := q^{N,\psi}_{i_1, n_1}(x_1) \prod_{j=2}^{|\bn|}q_{n_j-n_{j-1}}(x_j-x_{j-1})$ with
\begin{equation}\label{qpsi1}
q^{N,\psi}_{i_1, n_1}(x_1) := \sumtwo{x_0\in\Z^d}{n_0\in [0, n_1)\cap (n_1-I_{i_1})} \psi\Big(\frac{x_0}{\phi(N)}, \frac{n_0}{N}\Big) q_{n_1-n_0}(x_1-x_0).
\end{equation}
Note that the constraint $n_1 - n_0 \in I_{i_1}$ appearing in \eqref{psi_theta}
(inside $\bn \prec \bi$)
has been moved to \eqref{qpsi1}.
For simplicity, we will denote the summation constraints on $\bn$ in \eqref{ThetaPsiRW}
also by $\bn \prec \bi$. Note that we have just casted $\Theta^{N,M;\psi}_{\bi}$ in
the same form as $\Theta^{N,M}_{\bi}$ in \eqref{thetai}.
\medskip

\noindent
{\bf Variance Calculations.}
We first show that when $i_1<M$, $\Theta^{N,M;\psi}_{\bi}\to 0$ because its variance tends to $0$. Note that
\begin{align}\label{varpsi}
\bbvar\big( \Theta^{N;M;\psi}_{\bi} \big)
= \frac{\big(\frac{M}{\Ro_{N}} \big)^{|\bi|-1} }{\phi(N)^dN^3}
 \sumtwo{\bn\prec \bi}{\bx \in (\Z^d)^{|\bi|}} (q^{N,\psi}_{i_1, n_1}(x_1))^2 \prod_{j=2}^{|\bi|} q_{n_j-n_{j-1}}(x_j-x_{j-1})^2.
\end{align}
Note that given $(n_1, x_1)$, the sum over $(n_2,x_2),...,(n_{|\bi|},x_{|\bi|})$ asymptotically equals $\big(\frac{\Ro_{N}}{M}\big)^{{|\bi|}-1}$ by the same calculations as in \eqref{eq:varcom}. Therefore
\begin{align}
&\bbvar\big( \Theta^{N;M;\psi}_{\bi} \big)
 = \frac{1+o(1)}{\phi(N)^dN^3} \sumtwo{x_1 \in \Z^d}{1\leq n_1\leq N} (q^{N,\psi}_{i_1, n_1}(x_1))^2 \notag \\
= \ \ &  \frac{1+o(1)}{\phi(N)^dN^3} \sumtwo{x_1 \in \Z^d}{1\leq n_1\leq N}
\sumtwo{0\leq n_0,n_0'\in n_1-I_{i_1}}{x_0, x_0'\in\Z^d}  \psi\big(\hat {\sfX}_N \big) \psi\big(\hat{\sfX}'_N \big) q_{n_1-n_0}(x_1-x_0) q_{n_1-n_0'}(x_1-x_0'). \label{varpsi2}
\end{align}
Since $\psi\in C_c(\R^d\times [0,1])$, we can choose $A>0$ large enough such that ${\rm supp}(\psi)\subset [-A,A]^d\times [0,1]$. Then in \eqref{varpsi2}, we can restrict the sums to $|x_0|_\infty, |x_0'|_\infty \leq A\phi(N)$. By the local limit theorem for $q_n(\cdot)$ in Hypothesis \ref{hypoth}, we observe that the dominant contribution in \eqref{varpsi2} comes from $x_1\in\Z^d$ with $|x_1|_\infty < \widetilde A\phi(N)$ if $\widetilde A$ is large enough. By first summing over $(x_0,n_0)$ and $(x_0', n_0')$, we then have
\begin{align}
\bbvar\big( \Theta^{N;M;\psi}_{\bi} \big)
&\leq \frac{C}{\phi(N)^dN^3}
 \sumtwo{|x_1|_\infty < \tilde A \phi(N)}{1\leq n_1\leq N}
|\psi|_\infty^2 \sumtwo{x_0, x_0'\in\Z^d}{1\leq n_0, n_0'\in n_1-I_{i_1}}  q_{n_1-n_0}(x_1-x_0) q_{n_1-n_0'}(x_1-x_0') \notag\\
&= \frac{C}{\phi(N)^dN^3}
 \sumtwo{|x_1|_\infty < \widetilde A\phi(N)}{1\leq n_1\leq N}
 \ \sum_{1\leq n_0, n_0'\in n_1-I_{i_1}} |\psi|_\infty^2 \notag \\
 &\leq \frac{C|\psi|_\infty^2}{\phi(N)^dN^3} \cdot N (2\widetilde A)^d \phi(N)^d |I_{i_1}|^2 \asto{N}{0}, \label{ThePsiVar}
\end{align}
where the convergence holds because $\Ro_{\sft_{i_1}}/\Ro_N\sim i_1/M<1$ implies that $|I_{i_1}|\leq \sft_{i_1}=o(N)$.

We next check that when $i_1=M$, $ \Theta^{N;M;\psi}_{\bi}$ has the correct limiting variance. Note that because $I_{i_1}=(\sft_{M-1}, \sft_M]=(\sft_{M-1}, N]$ with $\sft_{M-1}\ll N$, by the same bound as in \eqref{ThePsiVar}, we can enlarge the range of summation
of $n_0, n'_0$ in \eqref{varpsi2} to $0 \le n_0, n'_0 < n_1$
without changing the limiting variance. Moreover, by the local limit theorem for $q_n$ in Hypothesis \ref{hypoth}, we have
\begin{align}\label{varpsi3}
\bbvar\big( \Theta^{N;M;\psi}_{\bi} \big)
&= \frac{1+o(1)}{\phi(N)^dN^3}
\sumtwo{0\leq n_0,n_0'<n_1\leq N}{x_1, x_0, x_0'\in\Z^d} \!\!\!\!\psi\big(\hat {\sfX}_N \big)
 \psi\big(\hat{\sfX}'_N \big) q_{n_1-n_0}(x_1-x_0) q_{n_1-n_0'}(x_1-x_0') \notag\\
& = \frac{1+o(1)}{\phi(N)^dN^3}
\sumtwo{0\leq n_0,n_0'<n_1\leq N}{x_1, x_0, x_0'\in\Z^d} \!\!\!\!\psi\big(\hat {\sfX}_N \big)
 \psi\big(\hat{\sfX}'_N \big) \frac{g(\frac{x_1-x_0}{\phi(n_1-n_0)})}{\phi(n_1-n_0)^d} \frac{g(\frac{x_1-x_0'}{\phi(n_1-n_0')})}{\phi(n_1-n_0')^d} \\
&= \frac{1+o(1)}{\phi(N)^{3d} N^3}
\sumtwo{0\leq n_0, n_0'<n_1\leq N}{x_1, x_0, x_0'\in\Z^d} \!\!\!\!\!\!\!\!\!\psi\big(\hat {\sfX}_N \big)
 \psi\big(\hat{\sfX}'_N \big) \frac{g\big(\frac{x_1-x_0}{\phi(N)}/(\frac{n_1-n_0}{N})^{\frac{1}{d}}\big)}{\frac{n_1-n_0}{N}} \frac{g\big(\frac{x_1-x_0'}{\phi(N)}/(\frac{n_1-n_0'}{N})^{\frac{1}{d}}\big)}{\frac{n_1-n_0'}{N}},\notag
\end{align}
where in the last equality
we have replaced $\phi(n_1-n_0)$ and $\phi(n_1-n_0')$ respectively by $\phi(N)(\frac{n_1-n_0}{N})^{1/d}$ and $\phi(N)(\frac{n_1-n_0'}{N})^{1/d}$. This is justified when $n_1-n_0, n_1-n_0'>\eps N$ for any fixed $\eps>0$, because $\phi(n)=(L(n)^2n)^{1/d}$ and $L(\cdot)$ is slowly varying; while on the other hand, the contributions to \eqref{varpsi3} from $n_0,n_0', n_1$ with $n_1-n_0\leq \eps N$ or $n_1-n_0' \leq  \eps N$ can be made arbitrarily small by choosing $\eps$ small, thanks to the same estimates as in \eqref{ThePsiVar}. A Riemann sum approximation with $y:=x_0/\phi(N)$, $y':=x_0'/\phi(N)$, $z:=x_1/\phi(N)$, $s=n_0/N$, $s':=n_0'/N$ and $t:=n_1/N$ then gives
\begin{align}\label{limtoVpsi}
\bbvar\big( \Theta^{N;M;\psi}_{\bi} \big)  & \asto{N} \idotsint\limits_{y, y', z\in\R^d \atop 0\leq s, s' <t\leq1} \psi(y, s) \psi(y', s') \frac{g\left( \frac{z -  y}{(t- s)^{1/d}} \right)}{t-s}
	\frac{g\left( \frac{z - y'}{(t-s')^{1/d}} \right)}{t-s'} \dd z \dd t \dd y \dd y'  \dd s \dd s' \notag\\
& \quad = \idotsint\limits_{y, y', z\in\R^d \atop 0\leq s, s' \leq1} \psi(y, s) K((y,s), (y', s')) \psi(y', s') \dd y \dd y' \dd s \dd s' =V^\psi,
\end{align}
where
\begin{align}
 K((y,s), (y', s'))  & = \int_{s \vee s'}^1\int_{\R^d}\frac{g\left( \frac{z - y}{(t-s)^{1/d}} \right)}{t-s}
	\frac{g\left( \frac{z-y'}{(t-s')^{1/d}} \right)}{t-s'} \dd z \dd t \notag \\
& = \int_{s \vee s'}^1\int_{\R^d} g_{t-s}(z-y) g_{t-s'}(z-y') \dd z \dd t = \int_{s \vee s'}^1 g_{2t-s-s'} (y-y') \dd t \notag \\
& = \int_{s \vee s'}^1 \frac{g\big(\frac{y-y'}{(2t-s-s')^{1/d}}\big)}{2t-s-s'} \dd t = \frac{1}{2}\int_{|s-s'|}^{2-s-s'} \frac{g\big(\frac{y-y'}{u^{1/d}}\big)}{u} \dd u. \label{Kagain}
\end{align}
Here we used the fact that the transition density $g_t$ of a Brownian motion in $\R^2$ (or a Cauchy process in $\R$) is symmetric and scaling invariant, and $g=g_1$. Note that $K$ agrees with the kernel in \eqref{eq:kappakernel}, which completes the variance verification.
\medskip

\noindent
{\bf Fourth moment calculations.} We now apply the fourth moment theorem Theorem~\ref{T:4mom} to prove Proposition \ref{KeyStep2}.
By the variance calculations above, it suffices to restrict our attention to $(\Theta^{N,M; \psi}_{\bi})_{\bi \in D_M}$ with $i_1=M$. For distinct
$\bi, \bi'\in D_M$ with $i_1=i'_1=M$, it is easily seen that $\E[\Theta^{N,M; \psi}_{\bi} \Theta^{N,M; \psi}_{\bi'}]=0$. Therefore condition (i) in Theorem \ref{T:4mom} is satisfied. Clearly condition (iii) also holds. It only remains to verify the fourth moment condition:
\begin{align}\label{eq:tosho2}
\lim_{N\to\infty} \bbE \big[ (\Theta^{N,M;\psi}_{\bi})^4 \big] = 3 V^\psi
	\qquad \forall\, \bi \in D_M  \mbox{ with } i_1=M,
\end{align}
assuming that $(\eta_{(n,x)})_{n\in\N, x\in\Z^d}$ are i.i.d.\ standard normal.

Using the representation for $\Theta^{N,M;\psi}_{\bi}$ in \eqref{ThetaPsiRW}, we have a similar expansion of the fourth-moment as in \eqref{sumTheta} and \eqref{prodQ}:
\begin{align}
\bbE \big[ (\Theta^{N,M;\psi}_{\bi})^{4} \big]  = \frac{\big(\frac{M}{\Ro_N}\big)^{2|\bi|-2} }{\phi(N)^{2d}N^6}
\!\!\!\!\!\!\!\!\! \sumtwo{\ba, \bb, \bc, \bd \prec \bi}{\bx, \by, \bz, \bw\in (\Z^d)^{|\bi|}}
  \!\!\!\!\!\!\!\!\!\!\!\!  q^\psi_{\ba}(\bx) q^\psi_{\bb}(\by)  q^\psi_{\bc}(\bz) q^\psi_{\bd}(\bw)
\, \bbE \big[ \eta_{(\ba,\bx)}
\eta_{(\bb,\by)}  \eta_{(\bc,\bz)}  \eta_{(\bd,\bw)} \big], \label{4mom_psi}
\end{align}
where $q^\psi_{\bn}(\bx) := q^{N,\psi}_{M, n_1}(x_1) \prod_{j=2}^{|\bn|}q_{n_j-n_{j-1}}(x_j-x_{j-1})$ with $q^{N,\psi}_{M, n_1}(x_1)$ defined as in \eqref{qpsi1}.

Note that the only difference between the expansion in \eqref{4mom_psi} and the expansion for $\bbE \big[ (\Theta^{N,M}_{\bi})^{4} \big]$ in \eqref{sumTheta}--\eqref{prodQ} is that the factors $q_{n_1}(x_1)$ are replaced by $q^{N,\psi}_{M, n_1}(x_1)$, and the corresponding normalizing constant $\sum_{x_1\in\Z^d, n_1\in I_M} q_{n_1}(x_1)^2 \sim \Ro_N/M$ is replaced by
\begin{align}\label{eq:usefu}
\frac{1}{V^\psi} \sum_{x_1\in\Z^d \atop 1\leq n_1\leq N} (q^{N,\psi}_{M, n_1}(x_1))^2 \sim \phi(N)^d N^3,
\end{align}
where this asymptotic relation follows from the variance calculations in \eqref{varpsi2} and \eqref{limtoVpsi}.

The verification of the fourth moment condition \eqref{eq:tosho2} now follows the same argument as that for $\Theta^{N,M}_{\bi}$ in Section \ref{main_estimate}. Recall from \eqref{eq:sfp} that
\begin{align*}
	\sfp := \big| (\ba,\bx) \cup (\bb, \by) \cup (\bc, \bz) \cup (\bd, \bw) \big|
	= \sum_{(n,r) \in \N \times \Z^d} \ind_{\{(n,r) \in
	(\ba,\bx) \cup (\bb, \by) \cup (\bc, \bz) \cup (\bd, \bw)\}},
\end{align*}
and we relabel $(\ba,\bx) \cup (\bb, \by) \cup (\bc, \bz) \cup (\bd, \bw) = \{ (f_1, h_1), (f_2, h_2), \ldots, (f_p, h_p) \}$, with $f_1 \le f_2 \le \cdots  \le f_p$.

In the proof of Proposition \ref{thm:main1}, we considered 5 cases. {\bf Case 1} with $\sfp>2|\bi|$ can be treated exactly the same way here.

For {\bf Case 2} with $\sfp<2|\bi|$, we can follow the same arguments up to \eqref{Sn2-Cauchy}
(note that $0 \le q_n(x) \le 1$ under our assumptions).
If there are only two factors of $q$ and $q^{N, \psi}$ in the l.h.s.\ of \eqref{Sn2-Cauchy} that involve
$(f_p, h_p)$, then we apply Cauchy-Schwartz exactly as in \eqref{Sn2-Cauchy}, which gives the
desired factors of $(\Ro_N/M)^{1/2}$ or $(\phi(N)^d N^3)^{1/2}$. If there are four factors of $q$
and $q^{N, \psi}$, then we can pick any two factors and bound the factor of $q$ by 1, and bound
the factor of $q^{N, \psi}$ by $N \|\psi \|_\infty$,  since
$$
q^{N,\psi}_{M, n_1}(x_1) := \sumtwo{x_0\in\Z^d}{n_0\in [0, n_1)\cap (n_1-I_M)}
\psi\Big(\frac{x_0}{\phi(N)}, \frac{n_0}{N}\Big) q_{n_1-n_0}(x_1-x_0) \leq N \|\psi\|_\infty .
$$
Note that the pre-factor in \eqref{4mom_psi} will be cancelled out exactly when each $q$
contributes a factor of $(\Ro_N/M)^{1/2}$
to the sum, and each $q^{N, \psi}$ contributes a factor of $(\phi(N)^d N^3)^{1/2}$. Each replacement
of $q$ by $1$ in \eqref{Sn2-Cauchy}
leads to the loss of a factor $(\Ro_N/M)^{1/2}$ in the bound for $ \mathcal{S}_N^{(p)}$
in \eqref{Sn2est}, and similarly, each replacement of $q^{N,\psi}$ by $N \|\psi\|_\infty$ leads
to the loss of a factor $(\phi(N)^dN^3)^{1/2}/N \|\psi\|_\infty$. Summing successively over
$(f_{p-1},h_{p-1})$, $(f_{p-2},h_{p-2})$, \ldots, $(f_1,h_1)$ then gives a similar bound as in
\eqref{Sn2est},  so that the contributions in this case is negligible.

For {\bf Case 3} where $[\ba] \cup [\bb] \cup [\bc] \cup [\bd]$ consists of three or four connected components, it again reduces to {\bf Case 1}.

For {\bf Case 4} where $[\ba] \cup [\bb] \cup [\bc] \cup [\bd]$ consists of a single connected component, we follow the same calculations
up to \eqref{Sn2-Cauchybis}, where we note that because $f_u=c_1$ is the first index of the sequence $\bc$, the first factor in the r.h.s.\ of \eqref{Sn2-Cauchybis} should be replaced by
$$
\Big( \sum_{x\in \bbZ^d, f_1 < n \leq f_1 + \bar m_1} (q^{N, \psi}_n(x))^2 \Big)^{1/2} \leq  \big(C \bar m_1 (2\widetilde A)^d \phi(N)^d |I_M|^2\big)^{1/2}
= o\big((\phi(N)^dN^3)^{1/2}\big),
$$
where the inequality follows the same calculations as in \eqref{ThePsiVar}, and the last equality holds
since $|I_M| \le N$ and
$\bar m_1= |\bi| \sft_{M-2}=o(N)$, by its definition in \eqref{eq:constra}.
This implies a similar bound as in \eqref{case3b} and shows that this case is also negligible.

{\bf Case 5} where $[\ba] \cup [\bb] \cup [\bc] \cup [\bd]$ consists of two connected components gives the full contribution to the limiting fourth moment of $\Theta^{N, M;\psi}_{\bi}$ in \eqref{eq:tosho2}, and the argument is exactly the same as in the proof of Proposition \ref{thm:main1}.
\end{proof}
\smallskip

\begin{proof}[Proof of Theorem \ref{thm:field}]
Recall from \eqref{eq:J0} that
\begin{equation*}
J^\psi_N:=\frac{1}{\phi(N)^d N }\sum_{\sfX\in \bbZ^d\times \bbN_0}
\sqrt{\Ro_N L(N)^2} \,\Big(\,Z_{N,\gb_N}^{\go}(\sfX)-1\Big) \psi (\hat{\sfX}_N ).
\end{equation*}
To prove Theorem \ref{thm:field}, i.e., show that $J^\psi_N$ converges in distribution to a Gaussian random variable with mean zero and variance given in \eqref{eq:sigmapsi}--\eqref{eq:kappakernel}, we plug in the polynomial chaos expansion of $Z_{N,\gb_N}^{\go}(\sfX)$
from \eqref{eq:Zpoly0} (with $(x_0, n_0):=\sfX$) and rewrite $J^\psi_N$ as
\begin{align}\label{J000}
J^\psi_N & = \frac{L(N)}{\phi(N)^dN } \sum_{k=1}^\infty \frac{\hbeta^k}{\Ro_N^{(k-1)/2}}
\sumtwo{1\leq n_1<\cdots<n_k\leq N}{x_1, x_2, \ldots, x_k \in \bbZ^d} \!\!\!\!\!\!\!\!\!\! q^{N,\psi}_{n_1}(x_1)
	\, \prod_{j=2}^k q_{n_j-n_{j-1}}(x_j-x_{j-1})
	\, \prod_{i=1}^k \eta_{(n_i,x_i)},
\end{align}
where
\begin{equation}
q^{N,\psi}_{n_1}(x_1) := \sum_{x_0\in\Z^d, 0
\le n_0<n_1} \psi\Big(\frac{x_0}{\phi(N)}, \frac{n_0}{N}\Big) q_{n_1-n_0}(x_1-x_0).
\end{equation}

The approximation steps {\bf (A1)}--{\bf (A3)},
described for the partition function $Z_{N,\gb_N}^\go$ in Section~\ref{ss:proofsteps}
(and proved in Section~\ref{sec:CG}),
can also be performed for $J^\psi_N$ with only minor differences. Similar to \eqref{ZA3K}, we can therefore approximate $J^\psi_N$ by
\begin{align}\label{hatJ00}
\hat{J}^{\psi}_{N,M}&:=
\sum_{k=1}^M \frac{\hat\gb^k }{M^{(k-1)/2}}\,\sum_{\bi\in\{1,...,M\}_\sharp^k}
\Theta^{N;M;\psi}_{\bi^{(1)}} \,
 \Theta^{N;M}_{\bi^{(2)}}\cdots \Theta^{N;M}_{\bi^{(\mathfrak{m})}},
\end{align}
where $\bi^{(1)},...,\bi^{(\mathfrak{m})}$ is the decomposition of $\bi$ into dominated sequences and $\Theta^{N;M;\psi}_{\bi}$ was defined in \eqref{psi_theta}.  Taking the limit $N\to\infty$ in \eqref{hatJ00} and applying Proposition~\ref{KeyStep2} as done in \eqref{eq:cutoff}, we obtain 
\begin{align}
\hat{J}^{\psi}_{N,M}
\xrightarrow[N\to\infty]{d}
	\bs{J}_\hbeta^{(M)}
	:= \!\!\sum_{k=1}^\infty \frac{\hbeta^k }{M^{\frac{k-1}{2}}} \!\!\sum_{\bi \in \{1, \ldots, M\}^k_\sharp}
	 \prod_{l=1}^{\mathfrak{m}(\bi)} \zeta^\psi_{\bi^{(l)}} = \sum_{k=1}^\infty \frac{\hbeta^k }{M^{\frac{k-1}{2}}} \!\!\sum_{\bi \in \{1, \ldots, M\}^k_\sharp \atop i_1=M} \zeta^\psi_{\bi},
\end{align}
where we used the fact that
by Proposition \ref{KeyStep2} only $\zeta^\psi_{\bi}$ with $i_1=M$ are non-zero. Again, by
Proposition \ref{KeyStep2}, we note that $\bs{J}_\hbeta^{(M)}$ is a normal random
variable (as a sum of independent normal variables) with mean zero and variance
$$
\bbvar(\bs{J}_\hbeta^{(M)}) = \sum_{k=1}^\infty \frac{\hbeta^{2k} }{M^{k-1}} |\{\bi \in \{1, \ldots, M\}^k_\sharp : i_1=M \}| V^\psi \asto{M}
\frac{\hbeta^2}{1-\hbeta^2} V^\psi,
$$
which is exactly the variance $\sigma_\psi^2$ in \eqref{eq:sigmapsi}. Therefore $\bs{J}_\hbeta^{(M)}$ converges to a normal random variable with mean zero and variance $\sigma_\psi^2$. This is the analogue of step {\bf (A4)} in Section \ref{ss:proofsteps}, which concludes the proof of Theorem \ref{thm:field}.
\end{proof}

\section{Proof for the $2d$ Stochastic Heat Equation}\label{S:SHE}

In this section, we prove Theorems~\ref{T:SHE} and \ref{T:SHEfield} for the regularized 2d Stochastic Heat Equation \eqref{2dSHEeps}. The basic strategy is to compare the solution $u^\eps$ with the partition function of a directed polymer on $\Z^{2+1}$, so that we can apply Theorems \ref{thm:short-cor} and
\ref{thm:field}.

The starting point is the Feynman-Kac representation \eqref{Feynman1.1} for $\tilde u^\eps(t,x)$, which has the same distribution as $u^\eps(t,x)$, but differs by a time reversal in the Feynman-Kac formula \eqref{Feynman}. We can extend this representation jointly to all $(\tilde u^\eps(t,x))_{t\in [0,1], x\in \R^2}$. Namely, let
\begin{align}
\tilde u^\eps(t,x) & = \E_{(\eps^{-2}(1-t), \eps^{-1}x)}\!\Big[\! \exp\Big\{\beta_\eps  \! \int_{\eps^{-2}(1-t)}^{\eps^{-2}} \int_{\R^2} \! j(B_{s}- y) {\tilde W}(\dd s, \dd y)  - \frac{1}{2}\beta_\eps^2 \eps^{-2}t \Vert j\Vert_2^2\Big\}\!\Big] \notag\\
& = \E_{(\eps^{-2}(1-t), \eps^{-1}x)}\!\Big[:\exp\Big\{\beta_\eps   \int_{\eps^{-2}(1-t)}^{\eps^{-2}} \int_{\R^2} j(B_{s}- y) {\tilde W}(\dd s, \dd y)  \Big\}:\Big], \label{FKrec}
\end{align}
where $\E_{(s, y)}$ denotes expectation for a Brownian motion $B$ starting at $y$ at time $s$. It is then clear that $(\tilde u^\eps(t,x))_{t\in [0,1], x\in \R^2}$ has the same distribution as $(u^\eps(t, x))_{t\in [0,1], x\in\R^2}$.

Furthermore, by the definition of the Wick exponential $:\exp :$
\cite[\S 3.2]{J97}, we can write
\begin{align}
\tilde u^\eps(t,x) &=
1+ \sum_{k=1}^\infty\beta_\eps^k \!\!\!\!\! \idotsint\limits_{\eps^{-2}(1-t)<t_1<\cdots <t_k<\eps^{-2}}\!\!\!\!\!\!\!\!\!\!\!\!\!\!\! \int_{\R^{2k}} \!\!\!\!\! \E_{(\eps^{-2}(1-t), \eps^{-1}x)}\Big[\prod_{i=1}^k j(B_{t_i}-x_i) \Big] \prod_{i=1}^k \widetilde W(\dd t_i, \dd x_i) \label{Feynman2} \\
&= 1+ \sum_{k=1}^\infty\beta_\eps^k\!\!\!\!\! \idotsint\limits_{\eps^{-2}(1-t)<t_1<\cdots <t_k<\eps^{-2}} \!\!\!\!\!\!\!\!\!\!\!\!\!\!\!  \int_{\R^{2k}}\Big(\int_{\R^{2k}} \prod_{i=1}^k p_{t_i-t_{i-1}}(y_i-y_{i-1}) j(y_i-x_i) \dd \vec y\Big) \prod_{i=1}^k \widetilde W(\dd t_i, \dd x_i), \nonumber
\end{align}
where $p_t(\cdot)$ is the probability density for $B_t$, $(t_0, y_0)= (\eps^{-2}(1-t), \eps^{-1}x)$, and $\dd\vec y=\dd y_1 \cdots \dd y_k$.

We are now ready to give the proofs.
\medskip

\begin{proof}[Proof of Theorem \ref{T:SHE}] Let $\hbeta<1$. We will perform a series of approximations, eventually approximating $\tilde u^\eps(t^{(i)}_\eps,x^{(i)}_\eps)$ by $Z^\eta_{\eps^{-2}, \beta_\eps}(\eps^{-1} x^{(i)}_\eps, \eps^{-2}(1-t^{(i)}_\eps))$, $1\leq i\leq r$, the partition functions of a directed polymer model on $\Z^{2+1}$. Since our approximations will be carried out in $L^2$ on the same probability space, it suffices to consider a single term $\tilde u^\eps(t_\eps^{(i)}, x^{(i)}_\eps)$, or just $\tilde u^\eps(1, 0)$.

\smallskip
\noindent
{\bf Step 1.} First we show that $\tilde u^\eps(1,0)$ can be approximated in $L^2$ if for each $k\in\N$, the integral over $\vec t:=(t_1,\ldots, t_k)$ in \eqref{Feynman2} is restricted to the set
\begin{equation}\label{Tkeps1}
T_{k,\eps} := \{(t_1, \ldots, t_k) \in (0,\eps^{-2})^k : t_i-t_{i-1} \geq \sqrt{\log \eps^{-1}} \ \forall\, 1\leq i\leq k \}.
\end{equation}
This is necessary because in the $L^2$ approximations that follow,
$\int p^2_{t_i-t_{i-1}}(y_i-y_{i-1}) \dd y_i$ is not integrable in $t_i$ due to the singularity
when $t_i$ is near $t_{i-1}$.
We thus approximate $\tilde u^\eps(1,0)$ by
\begin{equation}
v^\eps(1, 0) := 1+ \sum_{k=1}^\infty\beta_\eps^k \int_{T_{k,\eps}} \int_{\R^{2k}}\Big(\int_{\R^{2k}} \prod_{i=1}^k p_{t_i-t_{i-1}}(y_i-y_{i-1}) j(y_i-x_i) \dd \vec y\Big) \prod_{i=1}^k \widetilde W(\dd t_i, \dd x_i).
\label{vepst}
\end{equation}
By It\^o isometry and the orthogonality of terms in different orders of the Wiener chaos,
$$
\bbE[(\tilde u^\eps(1,0)- v^\eps(1,0))^2] = \sum_{k=1}^\infty\beta_\eps^{2k} \int_{T_{k,\eps}^c} \int_{\R^{2k}}\Big(\int_{\R^{2k}} \prod_{i=1}^k p_{t_i-t_{i-1}}(y_i-y_{i-1}) j(y_i-x_i) \dd \vec y\Big)^2 \dd \vec x \dd \vec t.
$$
Given $\vec t \in T_{k, \eps}^c$, let
$I(\vec t):= \{1\leq i\leq k: t_i-t_{i-1}\leq \sqrt{\log\eps^{-1}}\} \subseteq \{1,\ldots, k\}$.
We can then rewrite the integrand
$\prod_{i=1}^k p_{t_i-t_{i-1}}(y_i-y_{i-1}) j(y_i-x_i)$ above as
$$
\Big(\prod_{i\in I(\vec t)} p_{t_i-t_{i-1}}(y_i-y_{i-1}) \prod_{i\notin I(\vec t)} j(y_i-x_i)\Big) \times \Big(\prod_{i\notin I(\vec t)} p_{t_i-t_{i-1}}(y_i-y_{i-1}) \prod_{i\in I(\vec t)} j(y_i-x_i)\Big).
$$
Taking the first factor as a probability density for $\vec y$ while taking the second factor as the integrand, we can then apply Jensen's inequality to obtain the bound
\begin{align}
&\bbE[(\tilde u^\eps(1,0)- v^\eps(1,0))^2] \\
&\leq \sum_{k=1}^\infty\beta_\eps^{2k} \!\!\!\!\!\!\!\!\int\limits_{T^c_{k,\eps}\times \R^{4k}}\prod_{i\notin I(\vec t)}\!\!\! p^2_{t_i-t_{i-1}}(y_i-y_{i-1}) \!\!\prod_{i\in I(\vec t)}\!\!\! j^2(y_i-x_i) \!\!\prod_{i\in I(\vec t)}\!\!\! p_{t_i-t_{i-1}}(y_i-y_{i-1}) \!\!\prod_{i\notin I(\vec t)}\!\!\! j(y_i-x_i) \dd \vec x\dd \vec y \dd \vec t, \nonumber \\
&= \sum_{k=1}^\infty\beta_\eps^{2k} \!\!\! \int\limits_{T^c_{k,\eps}} \int\limits_{\R^{2k}} \Vert j\Vert_2^{2|I(\vec t)|} \!\! \prod_{i\notin I(\vec t)}\!\!\! p^2_{t_i-t_{i-1}}(y_i-y_{i-1}) \!\!\prod_{i\in I(\vec t)}\!\!\! p_{t_i-t_{i-1}}(y_i-y_{i-1}) \dd \vec y \dd \vec t \nonumber \\
&= \sum_{k=1}^\infty\beta_\eps^{2k} \int\limits_{T^c_{k,\eps}} \int\limits_{\R^{2k}} \Vert j\Vert_2^{2|I(\vec t)|} \!\! \prod_{i\notin I(\vec t)} \frac{e^{-\frac{| y_i-y_{i-1}|^2}{t_i-t_{i-1}}} }{4\pi^2 (t_i-t_{i-1})^2}
\prod_{i\in I(\vec t)} \frac{e^{-\frac{| y_i-y_{i-1} |^2}{2(t_i-t_{i-1})}}}{2\pi (t_i-t_{i-1})} \dd \vec y \dd \vec t \nonumber \\
&= \sum_{k=1}^\infty\beta_\eps^{2k} \int\limits_{T^c_{k,\eps}} \Vert j\Vert_2^{2|I(\vec t)|} \!\! \prod_{i\notin I(\vec t)}
\frac{1}{4\pi (t_i-t_{i-1})} \dd \vec t \nonumber \\
&\leq \sum_{k=1}^\infty \beta_\eps^{2k} \!\!\!\!\!\! \sum_{I\neq\emptyset \atop I\subset \{1, \ldots, k\}} \!\!\!\!\!\!
\big(\Vert j\Vert_2^2\sqrt{\log \eps^{-1}}\big)^{|I|} \Big(\frac{\log (\eps^{-2})}{4\pi}\Big)^{k-|I|} \nonumber \\
&= \sum_{k=1}^\infty \hbeta^{2k} \Big(\frac{2\pi}{\log \eps^{-1}}\Big)^k \Big\{ \Big(\Vert j\Vert_2^2\sqrt{\log \eps^{-1}} + \frac{\log (\eps^{-2} )}{4\pi} \Big)^k - \Big(\frac{\log (\eps^{-2} )}{4\pi}  \Big)^k \Big\} \astoo{\eps}{0} 0, \label{0thapprox}
\end{align}
where in the last step we used a binomial expansion and the last convergence follows from the dominated convergence theorem, since $\hbeta<1$.
\smallskip

\noindent
{\bf Step 2.} We next show that $v^\eps(1,0)$ can be approximated in $L^2$ by $w^\eps(1,0)$, where we replace $p_{t_i-t_{i-1}}(y_i-y_{i-1})$ in \eqref{vepst} by $p_{t_i-t_{i-1}}(x_i-x_{i-1})$, i.e.,
\begin{eqnarray}
w^\eps(1,0) &:=& 1+ \sum_{k=1}^\infty\beta_\eps^k \int_{T_{k,\eps}} \int_{\R^{2k}}\Big(\int_{\R^{2k}} \prod_{i=1}^k p_{t_i-t_{i-1}}(x_i-x_{i-1}) j(y_i-x_i) \dd \vec y\Big) \prod_{i=1}^k \widetilde W(\dd t_i, \dd x_i) \nonumber \\
&=& 1+ \sum_{k=1}^\infty\beta_\eps^k \int_{T_{k,\eps}} \int_{\R^{2k}}\prod_{i=1}^k p_{t_i-t_{i-1}}(x_i-x_{i-1}) \prod_{i=1}^k \widetilde W(\dd t_i, \dd x_i). \label{wepst}
\end{eqnarray}

Indeed,  by Jensen
\begin{eqnarray}
&& \bbE[(v^\eps(1,0) - w^\eps(1,0))^2] \nonumber\\
&\!\!\!\!=\!\!\!& \sum_{k=1}^\infty \beta_\eps^{2k} \!\! \int_{T_{k,\eps}\times\R^{2k}}\!\!\Big\{\! \int_{\R^{2k}} \!\!\Big(\prod_{i=1}^k p_{t_i-t_{i-1}}(y_i-y_{i-1})- \prod_{i=1}^k p_{t_i-t_{i-1}}(x_i-x_{i-1})\Big) \!\! \prod_{i=1}^k j(y_i-x_i) \dd \vec y\Big\}^2 \dd \vec x \dd \vec t \nonumber\\
&\!\!\!\!\leq\!\!\!& \sum_{k=1}^\infty \beta_\eps^{2k} \!\! \int_{T_{k,\eps}\times\R^{2k}\times\R^{2k}} \!\!\Big(\prod_{i=1}^k p_{t_i-t_{i-1}}(y_i-y_{i-1})- \prod_{i=1}^k p_{t_i-t_{i-1}}(x_i-x_{i-1})\Big)^2 \! \prod_{i=1}^k j(y_i-x_i) \dd \vec y\dd \vec x \dd \vec t. \qquad \quad \ \label{1stapprox}
\end{eqnarray}
To show that this bound goes to $0$ as $\eps\downarrow 0$, we will divide the integral over $\vec y$ into two parts.

Using $(a+b)^2 \leq 2(a^2+b^2)$, we can bound \eqref{1stapprox} by
\begin{eqnarray}
&&\!\!\! 2\sum_{k=1}^\infty \beta_\eps^{2k} \!\!\!\!\!\!\!\! \int\limits_{T_{k,\eps}\times\R^{2k}\times\R^{2k}} \!\!\!\!\!\!\!\Big(\prod_{i=1}^k p^2_{t_i-t_{i-1}}(y_i-y_{i-1})+ \prod_{i=1}^k p^2_{t_i-t_{i-1}}(x_i-x_{i-1})\Big) \! \prod_{i=1}^k j(y_i-x_i) \dd \vec y\dd \vec x \dd \vec t \ \quad \qquad \label{1stapprox2}\\
&=&\!\!\! 4 \sum_{k=1}^\infty \beta_\eps^{2k} \!\!\!\!\!\! \int\limits_{T_{k,\eps}\times\R^{2k}} \!\! \prod_{i=1}^k p^2_{t_i-t_{i-1}}(y_i-y_{i-1})\dd \vec y \dd \vec t
= 4 \sum_{k=1}^\infty \beta_\eps^{2k} \int\limits_{T_{k,\eps}} \prod_{i=1}^k \frac{1}{4\pi (t_i-t_{i-1})} \dd \vec t \nonumber \\
&\leq&  4 \sum_{k=1}^\infty \hbeta^{2k} \Big(\frac{2\pi}{\log \eps^{-1}}\Big)^{k} \Big(\prod_{i=1}^k \frac{\log (\eps^{-2})}{4\pi}\Big)
\astoo{\eps}{0} 4\sum_{k=1}^\infty \hbeta^{2k},  \nonumber
\end{eqnarray}
which is finite since $\hbeta<1$. Firstly, this calculation implies that it suffices to show that each summand on the r.h.s.\ of \eqref{1stapprox}, for a fixed $k\in\N$, tends to $0$ as $\eps\to 0$. Secondly, given $\vec t \in T_{k, \eps}$, if we restrict the integral  over $\vec x$ and $\vec y$ in \eqref{1stapprox2} to the set
\begin{equation}\label{Ek}
E_k=E_k(\vec t):=\{(\vec x, \vec y)\in \R^{2k}\times \R^{2k}: |y_i-y_{i-1}| \leq |t_i-t_{i-1}|^{\frac{3}{4}} \ \forall \, 1\leq i\leq k\},
\end{equation}
then it is easily seen that the $k$-th term in \eqref{1stapprox2} still converges to the same limit as $\eps\downarrow 0$, because the dominant contribution to $\int_{\R^2} p^2_t(y) \dd y$ comes from $|y|$ of the order $\sqrt{t}$. Therefore if the $k$-th integral over $\vec x$ and $\vec y$ in \eqref{1stapprox2} is restricted to $E_k^c$, then the $k$-th term in \eqref{1stapprox2}  tends to $0$ as $\eps\downarrow 0$, and the same is true for the $k$-th term in \eqref{1stapprox}.

It then only remains to consider the $k$-th term in \eqref{1stapprox}, where the integral over $\vec x$ and $\vec y$ is restricted to $E_k$. This can be bounded by
\begin{align}
& \beta_\eps^{2k} \!\!\!\!\! \int\limits_{T_{k,\eps}\times E_k}\!\! \Big(
\prod_{i=1}^k p_{t_i-t_{i-1}}(x_i-x_{i-1})
-\prod_{i=1}^k p_{t_i-t_{i-1}}(y_i-y_{i-1})
\Big)^2  \prod_{i=1}^k j(y_i-x_i) \dd \vec y\dd \vec x \dd \vec t \nonumber \\
=\ & \beta_\eps^{2k} \!\!\!\!\!\int\limits_{T_{k,\eps}\times E_k}\!\! \prod_{i=1}^k
p^2_{t_i-t_{i-1}}(y_i-y_{i-1})
\Big(e^{\sum_{i=1}^k\frac{- | x_i - x_{i-1}|^2 + |y_i - y_{i-1}|^2}{2(t_i-t_{i-1})}}
- 1 \Big)^2 \prod_{i=1}^k j(y_i-x_i) \dd \vec y\dd \vec x \dd \vec t \nonumber\\
\leq\ & \beta_\eps^{2k}\!\!\!\!\! \int\limits_{T_{k,\eps}\times E_k}\!\! \prod_{i=1}^k p^2_{t_i-t_{i-1}}(y_i-y_{i-1})
\Big(
e^{\frac{Ck}{(\log \eps^{-1})^{1/8}}} - 1
\Big)^2
\prod_{i=1}^k j(y_i-x_i) \dd \vec y\dd \vec x \dd \vec t \astoo{\eps}{0} 0, \label{3rdapprox}
\end{align}
where the convergence follows easily from the domination by \eqref{1stapprox2},
and to obtain the inequality, we argued as follows.
If we fix $A>0$ such that the support of $j(\cdot)$
is contained in a ball of radius $A$, the factor $j(y_i - x_i)$ in
\eqref{3rdapprox} entails $|y_i-x_i|\leq A$, for all $1 \le i \le k$.
Then, writing $|a|^2 - |b|^2 = |a-b|^2 + 2|b||a-b|$, we obtain
$$
\frac{|\, |x_i-x_{i-1}|^2 - |y_i-y_{i-1}|^2|}{2(t_i-t_{i-1})}
\leq \frac{(2A)^2 + 4A |y_i-y_{i-1}|}{2(t_i-t_{i-1})}
\leq \frac{C}{(\log \eps^{-1})^{\frac{1}{8}}}
$$
for any $C>0$ when $\eps>0$ is sufficiently small. Finally, we
note that $(e^x-1)^2 \le (e^{|x|}-1)^2$.

This concludes the proof that the bound in \eqref{1stapprox} tends to $0$ as $\eps\downarrow 0$, which shows that $v^\eps(1,0)$ and $w^\eps(1,0)$ have the same limiting distribution.

\medskip
\noindent
{\bf Step 3.} We now show that $w^\eps(1,0)$ can be approximated in $L^2$ by $Z^\eta_{\eps^{-2}, \beta_\eps}=Z^\eta_{\eps^{-2}, \beta_\eps}(0,0)$, the partition function of a directed polymer on $\Z^{2+1}$ starting at $(0,0)$, defined on the same probability space as $w^\eps$.

Let $(S_n)_{n\geq 0}$ be an irreducible aperiodic random walk on $\Z^2$ with $n$-step increment distribution $\hat p_n(\cdot)$, such that
$S_0=0$ and $\E[S_1(i) S_1(j)]=1_{\{i=j\}}$ for $i,j=1,2$, where $S_1(i)$ denotes the $i$-th coordinate of $S_1$. Recall from \eqref{eq:Zpoly0} that the partition function of a directed polymer model constructed from $S$ and i.i.d.\ space-time disorder $\eta$, with parameter $\beta_\eps$ and polymer length $\eps^{-2}$, admits the following polynomial chaos expansion:
\begin{equation}\label{wepstapp}
Z^\eta_{\eps^{-2}, \beta_\eps} = 1+ \sum_{k=1}^{\eps^{-2}} \beta_\eps^k \sum_{1\leq n_1<\cdots <n_k\leq \eps^{-2} \atop x_1, \ldots, x_k\in\Z^2}
\prod_{i=1}^k \hat p_{n_i-n_{i-1}}(x_i-x_{i-1}) \prod_{i=1}^k \eta_{n_i, x_i},
\end{equation}
where $(\eta_{n,x})_{n\in\N, x\in \Z^2}$ are i.i.d.\ random variables with mean $0$. For our purposes, we will let $\eta$ be i.i.d.\ standard normal variables defined from the space-time white noise $\widetilde W$ in the chaos expansion for $w^\eps(1, 0)$ in \eqref{wepst}:
\begin{equation}\label{etainWtilde}
\eta_{n,x} := \int_{\Delta_{n,x}} \widetilde W(\dd s, \dd y), \qquad n\in \N,\, x=(x(1), x(2))\in \Z^2,
\end{equation}
where $\Delta_{n,x}:=[n-1, n]\times [x(1)-1, x(1)]\times [x(2)-1, x(2)]$.

We can then rewrite \eqref{wepstapp} as
\begin{equation}\label{peiepstapp2}
Z^\eta_{\eps^{-2}, \beta_\eps} = 1+ \sum_{k=1}^{\infty} \beta_\eps^k \!\!\!\!\idotsint\limits_{0<t_1<\cdots<t_k<\eps^{-2}}\!\!\! \int_{(\R^2)^k}
\prod_{i=1}^k \hat p_{\lceil t_i\rceil -\lceil t_{i-1}\rceil}(\lceil x_i\rceil -\lceil x_{i-1}\rceil) \prod_{i=1}^k \widetilde W(\dd t_i, \dd x_i),
\end{equation}
where we set $\hat p_0(\cdot)\equiv 0$.

By Gnedenko's
local limit theorem (see e.g.~\cite[Theorem 8.4.1]{cf:BinGolTeu})
\begin{equation}\label{lclt}
\hat p_n(x) = \frac{1}{2\pi n} \Big(e^{-\frac{| x|^2}{2n}}+o(1)\Big)=p_n(x)+o\Big(\frac{1}{n}\Big) \quad \mbox{ uniformly in } x\in\Z^2 \mbox{ as } n\to\infty,
\end{equation}
where we recall that $p_n(x)$ in the right hand side is the transition kernel of Brownian motion.
By similar calculations as those leading to \eqref{0thapprox}, we can restrict the integral over $(t_1, \ldots, t_k)$ to $T_{k,\eps}$ as in the definition of $v^\eps(1,0)$ and $w^\eps(1,0)$ in \eqref{vepst} and \eqref{wepst}, i.e., if
\begin{equation}\label{peiepstapp3}
\tilde Z^\eta_{\eps^{-2}, \beta_\eps} := 1+ \sum_{k=1}^{\infty} \beta_\eps^k \int_{T_{k,\eps}\times (\R^2)^k}
\prod_{i=1}^k \hat p_{\lceil t_i\rceil -\lceil t_{i-1}\rceil}(\lceil x_i\rceil -\lceil x_{i-1}\rceil) \prod_{i=1}^k \widetilde W(\dd t_i, \dd x_i),
\end{equation}
then $\bbE[(\tilde Z^\eta_{\eps^{-2}, \beta_\eps}- Z^\eta_{\eps^{-2}, \beta_\eps})^2]\to 0$ as $\eps\downarrow 0$.

We can now bound the $L^2$ distance between $w^\eps(1,0)$ and $\tilde Z^\eta_{\eps^{-2}, \beta_\eps}$:
\begin{eqnarray}
&& \bbE[(w^\eps(1,0) - \tilde Z^\eta_{\eps^{-2}, \beta_\eps})^2] \nonumber\\
&\!\! =&\!\!\! \sum_{k=1}^\infty \beta_\eps^{2k} \int_{T_{k,\eps}\times \R^{2k}}\!\!\Big(\prod_{i=1}^k p_{t_i-t_{i-1}}(x_i-x_{i-1})- \prod_{i=1}^k \hat p_{\lceil t_i\rceil -\lceil t_{i-1}\rceil}(\lceil x_i\rceil -\lceil x_{i-1}\rceil)\Big)^2 \dd \vec x \dd \vec t, \qquad \quad \label{2ndapprox}
\end{eqnarray}
and we will separate the integration over $\vec x$ into two sets for each $k\in\N$.

Given $\vec t \in T_{k, \eps}$ and $L>0$, let
\begin{equation}\label{EkL}
E_{k,L}=E_{k,L}(\vec t):=\{\vec x \in \R^{2k} : |x_i-x_{i-1}| \leq L\sqrt{t_i-t_{i-1}} \ \forall \, 1\leq i\leq k\}.
\end{equation}
By the same calculations as for \eqref{1stapprox2}, we note that when the integrals over $\vec x$ in \eqref{2ndapprox} are restricted to $E^c_{k,L}$ for each $k\in\N$, the resulting series converges to a limit (as $\eps\downarrow 0$) that can be made arbitrarily small by choosing $L$ large. On the other hand, for any fixed $L>0$,
\begin{align}\label{lcltunibd}
\mbox{uniformly in } \ \vec t\in T_{k,\eps} \mbox{ and } \vec x\in E_{k,L}, \qquad
1-\prod_{i=1}^k \frac{\hat p_{\lceil t_i\rceil -\lceil t_{i-1}\rceil}(\lceil x_i\rceil -\lceil x_{i-1}\rceil)}{p_{t_i-t_{i-1}}(x_i-x_{i-1})} \astoo{\eps}{0} 0
\qquad
\end{align}
by the local central limit theorem \eqref{lclt}. Therefore when the integrals over $\vec x$ in \eqref{2ndapprox} are restricted to $E_{k,L}$, the resulting series also tends to $0$ as $\eps\downarrow 0$, as in \eqref{3rdapprox}.
In conclusion, the series in \eqref{2ndapprox} tends to $0$ as $\eps\downarrow 0$ and we can approximate $w^\eps(1, 0)$ by $Z^\eta_{\eps^{-2}, \beta_\eps}$.

\medskip
\noindent
{\bf Step 4.} When $\hbeta<1$, we just showed that each $\tilde u^\eps(t^{(i)}_\eps,x^{(i)}_\eps)$ can be approximated in $L^2$ by $Z^\eta_{\eps^{-2}, \beta_\eps}(\eps^{-1} x^{(i)}_\eps, \eps^{-2}(1-t^{(i)}_\eps))$. Identifying $\eps^{-2}$ with $N$, the convergence in Theorem~\ref{T:SHE} then follows by applying Theorem \ref{thm:short-cor} to $Z^\eta_{\eps^{-2}, \beta_\eps}(\eps^{-1} x^{(i)}_\eps, \eps^{-2}(1-t^{(i)}_\eps))$ for $1\leq i\leq r$.
\vskip 4mm
For $\hbeta\geq 1$, the proof for $u^\eps(1,0)\Rightarrow 0$ as $\eps\downarrow 0$ is the same as that
for the pinning and directed polymer models in Theorem~\ref{thm:subcritical}. Proving that
for $0<\theta<1$ the quantity $\bbE\big[\big(\tilde u^\eps(t,0)\big)^\theta\big]$
is decreasing in $\hbeta$
is even simpler and proceeds as follows. Note that by \eqref{Feynman1.1},
\begin{equation*}
	\tilde u^\eps(t,0) = \E_0\Big[ e^{\beta\int\int j(B_s-y) W(\dd s, \dd y)- \frac{\beta^2}{2}\Vert
	j\Vert_{L^2(\bbR\times\bbR^d)}^2 }\Big] =:
	\E_0 \Big[ e^{\beta G^W(B) - \frac{\beta^2}{2} \bbvar[G^W(B)]} \Big] \,,
\end{equation*}
where, for a fix realization of $B = (B_s)_{s \ge 0}$, the random variable
$G^W(B)$ has a centered Gaussian distribution
with variance $\Vert j\Vert_{L^2(\bbR\times\bbR^d)}^2$. For
$\beta^2=\beta_1^2+\beta_2^2$ we can write
$$
\tilde u^\eps(t,0)
\overset{d}{=}
\E_0 \Big[ e^{\beta_1 G^{W_1}(B) - \frac{\beta_1^2}{2} \bbvar[G^{W_1}(B)]}
\, e^{\beta_2 G^{W_2}(B) - \frac{\beta_2^2}{2} \bbvar[G^{W_2}(B)]} \Big]
$$
where $W_1, W_2$ are two independent space-time white noise, and we used the fact that $\beta W\stackrel{\rm dist}{=}\beta_1 W_1 +\beta_2 W_2$. Using Jensen's inequality to pass the expectation w.r.t.\ $W_2$ inside the fractional
root in $\E[(\tilde u^\eps(t,0))^\theta]$ then gives the desired monotonicity in $\beta$ as well as $\hbeta$, since the two differ by a constant factor.
\end{proof}
\smallskip

\begin{proof}[Proof of Theorem \ref{T:SHEfield}] Since $(\tilde u^\eps(t,x))_{t\in [0,1], x\in \R^2}$ defined in \eqref{FKrec} has the same distribution as $(u^\eps(t, x))_{t\in [0,1], x\in\R^2}$,
we note that $J^\psi_\eps$ in \eqref{SHEfield} has the same distribution as
$$
\tilde J^\psi_\eps := \Big(\frac{\log \eps^{-1}}{2\pi}\Big)^{1/2} \int_{\R^2\times [0,1]} (\tilde u^\eps(t,x)-1)\, \psi(x,1-t)\, \dd t \dd x.
$$
Applying the chaos expansion \eqref{Feynman2} with $t_0=\eps^{-2}(1-t)$ and $x_0=y_0=\eps^{-1}x$ gives
\begin{align}
\tilde J^\psi_\eps
&=\sum_{k=1}^\infty\hbeta^k \eps^4 \Big(\frac{2\pi}{\log \eps^{-1}}\Big)^{\frac{k-1}{2}}   \label{hatJpsi}\\
& \qquad \times \!\!\!\!\!\! \idotsint\limits_{x_1, \ldots, x_k\in \R^2 \atop
0<t_1<\cdots <t_k<\eps^{-2}} \!\!\!\!\! \Big(\int\limits_{\R^{2k}}p^{\,\eps, \psi}_{t_1}(y_1) \prod_{i=2}^k p_{t_i-t_{i-1}}(y_i-y_{i-1}) j(y_i-x_i) \dd \vec y\Big)
\prod_{i=1}^k \widetilde W(\dd t_i, \dd x_i), \notag
\end{align}
where we have plugged in $\beta_\eps=\hbeta \big(\frac{2\pi}{\log \eps^{-1}}\big)^{1/2}$,
changed variables $(t,x) = (1-\epsilon^2 t_0, \epsilon y_0)$ producing
the pre-factor $\eps^4$, and
\begin{align}\label{pepspsidef}
p_{t_1}^{\,\eps, \psi}(y_1) & := \int_{\R^2\times [0,t_1]}
\psi(\eps y_0, \eps^2 t_0)\  p_{t_1-t_0}(y_1-y_0)\ \dd y_0 \dd t_0.
\end{align}
Note that \eqref{hatJpsi} has the same form as the expansion for $\tilde u^\eps(1,0)$ in \eqref{Feynman2}, except that the factor $p_{t_1}(y_1)$ therein is now replaced by $p_{t_1}^{\,\eps, \psi}(y_1)$, and a pre-factor of $\big(\frac{2\pi}{\log \eps^{-1}}\big)^{1/2}$ has been replaced by $\eps^4$. We can now carry out the same steps as in the proof of Theorem \ref{T:SHE} to approximate $\tilde J^\psi_\eps$ by $J^{\psi}_N$ for a directed polymer on $\Z^2$, so that Theorem \ref{thm:field} can be applied.
\smallskip

In {\bf Step 1} of the proof of Theorem \ref{T:SHE}
we restricted the range of integration of $t_1, \ldots, t_k$ in \eqref{Feynman2} so that $t_i-t_{i-1}\geq \sqrt{\log \eps^{-1}}$ for all $1\leq i \leq k$. This is necessary because in the $L^2$ approximations that follow, $\int p^2_{t_i-t_{i-1}}(y_i-y_{i-1}) \dd y_i$ is not integrable in $t_i$ due to the singularity when $t_i$ is near $t_{i-1}$.There is no such singularity for $p^{\,\eps, \psi}_{t_1}(y_1)$, and in fact,
\begin{align}\label{pepspsi0}
& \Vert p^{\,\eps,\psi}\Vert_2^2 := \int\limits_{\R^2\times [0, \eps^{-2}]} (p^{\,\eps, \psi}_{t_1}(y_1))^2 \dd y_1 \dd t_1 \\
=\ &  \int\limits_{(\R^2)^3} \!\!
\int\limits_{t_1\in (0, \eps^{-2}) \atop t_0, t'_0\in (0, t_1)} \!\!\!\!\!\!\!\!\!
\psi(\eps y_0, \eps^2 t_0)\, \psi(\eps y'_0, \eps^2 t'_0) \, p_{t_1-t_0}(y_1-y_0) \, p_{t_1-t'_0}(y_1-y'_0)\ \dd y_0 \dd t_0 \dd y'_0 \dd t'_0 \dd y_1 \dd t_1 \notag
\end{align}
can be bounded in the same way as its discrete counterpart $q^{N, \psi}_{M, n_1}(x_1)$ from \eqref{qpsi1} (see the variance calculations in \eqref{ThePsiVar}--\eqref{limtoVpsi}
and \eqref{eq:usefu},
with $N=\eps^{-2}$, $\phi(N)=\eps^{-1}$), which gives $\Vert p^{\,\eps, \psi}\Vert_2 \sim C\eps^{-4}$ as $\eps\to 0$. Therefore in our current setting, we need to restrict $t_1, \ldots, t_k$ to
\begin{equation}\label{Tkeps2}
\tilde T_{k,\eps} := \{(t_1, \ldots, t_k) \in (0,\eps^{-2})^k : t_1>t_0, \ t_i-t_{i-1} \geq \sqrt{\log \eps^{-1}} \ \, \forall\, 2\leq i\leq k \}. 
\end{equation}
The rest of the calculations in {\bf Step 1} carry through once we take into account that $\Vert p^{\,\eps, \psi}\Vert_2$ is of the order $\eps^{-4}$, which cancels the pre-factor $\eps^4$ in \eqref{hatJpsi}.

In {\bf Step 2} of the proof of Theorem \ref{T:SHE}, we replaced $p_{t_i-t_{i-1}}(y_i-y_{i-1})$ by $p_{t_i-t_{i-1}}(x_i-x_{i-1})$ for each $1\leq i\leq k$, using the fact that $y_i-x_i$ must lie in the support of $j(\cdot)$. The same applies here, except that we also need to replace $p^{\,\eps, \psi}_{t_1}(y_1)$ by $p^{\,\eps, \psi}_{t_1}(x_1)$.

More precisely, we
can first apply the same calculations as in \eqref{1stapprox}--\eqref{3rdapprox} to replace $p_{t_i-t_{i-1}}(y_i-y_{i-1})$ by $p_{t_i-t_{i-1}}(x_i-x_{i-1})$ for each $2\leq i\leq k$. The only change we need to make is to redefine the set $E_k$ in \eqref{Ek} by
\begin{equation}\label{Ek2}
\tilde E_k:=\{(\vec x, \vec y)\in \R^{2k}\times \R^{2k}: |y_i-y_{i-1}| \leq |t_i-t_{i-1}|^{\frac{3}{4}} \ \forall \, 2\leq i\leq k\}.
\end{equation}
After making these replacements, it  only remains to bound the following simpler analogue of \eqref{3rdapprox}:
\begin{align}
& \eps^8 \Big(\frac{2\pi}{\log \eps^{-1}}\Big)^{k-1}  \!\!\!\!\! \int\limits_{\tilde T_{k,\eps}\times \tilde E_k}\!\!
(p^{\,\eps, \psi}_{t_1}(y_1)- p^{\,\eps, \psi}_{t_1}(x_1))^2 \prod_{i=2}^k p^2_{t_i-t_{i-1}}(x_i-x_{i-1}) \prod_{i=1}^k j(y_i-x_i) \dd \vec y\dd \vec x \dd \vec t \nonumber \\
\leq\ & C \eps^8  \!\!\!\!\! \int\limits_{\R^4 \times [0, \eps^{-2}]}\!\!
(p^{\,\eps, \psi}_{t_1}(y_1)- p^{\,\eps, \psi}_{t_1}(x_1))^2  j(y_1-x_1) \dd y_1\dd x_1 \dd t_1 \astoo{\eps}{0} 0,
\end{align}
where in the inequality, we applied the same calculations as for \eqref{1stapprox2}, and the convergence to $0$ can be easily deduced from the definition of $p^{\,\eps, \psi}_t(y)$ in \eqref{pepspsidef}.

We can therefore approximate $\tilde J^\psi_\eps$ in $L^2$ by
\begin{align}\label{whatJpsi}
\widehat J^\psi_\eps :=
\sum_{k=1}^\infty\hbeta^k \eps^4 \Big(\frac{2\pi}{\log \eps^{-1}}\Big)^{\frac{k-1}{2}}
\!\! \idotsint\limits_{\tilde T_{k,\eps}\times \R^{2k}} p^{\,\eps, \psi}_{t_1}(x_1)
\prod_{i=2}^k p_{t_i-t_{i-1}}(x_i-x_{i-1})  \prod_{i=1}^k \widetilde W(\dd t_i, \dd x_i).
\end{align}

Following {\bf Step 3} in the proof of Theorem \ref{T:SHE}, we now introduce a directed polymer on $\Z^2$, where the $n$-step transitional kernel $\hat p_n(\cdot)$ of the underlying random walk $S$ satisfies the local limit theorem in \eqref{lclt}, so that Hypothesis \ref{hypoth} holds with $L(\cdot)\equiv 1$. As in \eqref{J000}, and with $N=\eps^{-2}$ and $\phi(N)=(L(N)^2N)^{1/2}=\eps^{-1}$, we can define
\begin{align}
J^{\psi}_N & = \frac{L(N)}{\phi(N)^2N} \sum_{k=1}^\infty \frac{\hbeta^k}{\Ro_N^{(k-1)/2}}
\sumtwo{1\leq n_1<\cdots<n_k\leq N}{x_1, x_2, \ldots, x_k \in \bbZ^d} \!\!\!\!\!\!\!\!\!\! \hat p^{N,\psi}_{n_1}(x_1)
	\, \prod_{j=2}^k \hat p_{n_j-n_{j-1}}(x_j-x_{j-1})
	\, \prod_{i=1}^k \eta_{(n_i,x_i)}  \notag \\
& =  \sum_{k=1}^\infty \hbeta^k \eps^4\Big(\frac{2\pi+o(1)}{\log \eps^{-1}}\Big)^{\frac{k-1}{2}} \!\!\!\!\!\!\!\!\!\!\!\!
\sumtwo{1\leq n_1<\cdots<n_k\leq \eps^{-2}}{x_1, x_2, \ldots, x_k \in \bbZ^d} \!\!\!\!\!\!\!\!\!\!\!\!\!\!\! \hat p^{N,\psi}_{n_1}(x_1)
	\, \prod_{j=2}^k \hat p_{n_j-n_{j-1}}(x_j-x_{j-1})
	\, \prod_{i=1}^k \eta_{(n_i,x_i)} \label{JtildepsiN}
\end{align}
where we used $\Ro_N=\Ro_{\eps^{-2}} \sim \log \eps^{-1}/2\pi$ by \eqref{eq:RN}, and
\begin{align}
\hat p^{N,\psi}_{n_1}(x_1) := \!\!\!\!\! \sum_{x_0\in\Z^d, 0<n_0<n_1} \!\!\!\!\! \psi(\eps x_0, \eps^2 n_0)\ \hat p_{n_1-n_0}(x_1-x_0).
\end{align}
Note that $\hat p^{N,\psi}_{n_1}(x_1)$ is a discrete sum approximation of $p_{t_1}^{\,\eps, \psi}(x_1)$ in \eqref{pepspsidef},
and if we let $(\eta_{(n,x)})_{n\in\N, x\in\Z^2}$ be Gaussian random variables defined from $\widetilde W$ as in \eqref{etainWtilde}, then $J^{\psi}_N$ in \eqref{JtildepsiN} is just a discrete sum approximation of $\widehat J^\psi_\eps$ in \eqref{whatJpsi}. The $L^2$ difference $\Vert J^{\psi}_N-\widehat J^\psi_\eps\Vert_2^2$ can be shown to vanish as $\eps\to 0$, similar to \eqref{peiepstapp3}--\eqref{lcltunibd}, and we will omit the details.

Similar to {\bf Step 4} in the proof of Theorem \ref{T:SHE}, we can finally
apply Theorem \ref{thm:field} (for the directed polymer on $\Z^2$) to $J^{\psi}_N$ and conclude
that $J^{\psi}_N$, and hence also $J^\psi_\eps$, converge in distribution to a Gaussian random
variable with mean zero and variance $\sigma_{\psi}^2$.
\end{proof}

\section*{Acknowledgements}

We thank Massimiliano Gubinelli for useful discussions.
We also thank Giulia Comi for spotting some mistakes in the draft.
F.C. acknowledges the support of GNAMPA-INdAM.
R.S. is supported by NUS grant R-146-000-185-112.
N.Z. is supported by EPRSC through grant EP/L012154/1.


\bigskip

\end{document}